\theoremstyle{plain}
\newtheorem{theorem}{Theorem}[section]
\newtheorem{lemma}[theorem]{Lemma}
\newtheorem{corollary}[theorem]{Corollary}
\theoremstyle{definition}
\newtheorem{defi}[theorem]{Definition}
\newtheorem{assumption}[theorem]{Assumption}
\theoremstyle{remark}
\DeclareMathOperator*{\argmin}{arg\,min}
\newcommand{\R}{\mathbb{R}}
\newcommand{\E}{\mathbb{E}}
\newcommand{\T}{^{\top}}
\newcommand{\G}{\mathcal{G}}
\newcommand{\F}{\mathcal{F}}
\newcommand{\cD}{\mathcal{D}}
\newcommand{\bS}{\mathbb{S}}
\newcommand{\cO}{\mathcal{O}}
\newcommand{\bfone}{\mathbf{1}}
\newcommand{\bfonet}{\mathbf{1}^{\top}}
\definecolor{LightCyan}{rgb}{0.88,1,1}
\def\<#1,#2>{\langle #1,#2\rangle}
\tikzstyle{lemma} = [rectangle, rounded corners, minimum width=3cm, minimum height=1cm,text centered, draw=black]
\tikzstyle{arrow} = [thick,->,>=stealth]
\icmltitlerunning{Decentralized Stochastic Bilevel Optimization}
\begin{document}

\twocolumn[
\icmltitle{Decentralized Stochastic Bilevel Optimization with Improved per-Iteration Complexity}



\icmlsetsymbol{equal}{*}

\begin{icmlauthorlist}
\icmlauthor{Xuxing Chen}{yyy1}
\icmlauthor{Minhui Huang}{yyy2}
\icmlauthor{Shiqian Ma}{yyy3}
\icmlauthor{Krishnakumar Balasubramanian}{yyy4}

\end{icmlauthorlist}

\icmlaffiliation{yyy1}{Department of Mathematics, University of California, Davis, USA}
\icmlaffiliation{yyy2}{Department of Electrical and Computer Engineering, University of California, Davis, USA}
\icmlaffiliation{yyy3}{Department of Computational Applied Mathematics and Operations Research, Rice University, Houston, USA}
\icmlaffiliation{yyy4}{Department of Statistics, University of California, Davis, USA}

\icmlcorrespondingauthor{Xuxing Chen}{xuxchen@ucdavis.edu}

\icmlkeywords{Machine Learning, Bilevel Optimization, Decentralized Optimization, ICML}

\vskip 0.3in
]



\printAffiliationsAndNotice{}  

\begin{abstract}
    Bilevel optimization recently has received tremendous attention due to its great success in solving important machine learning problems like meta learning, reinforcement learning, and hyperparameter optimization. Extending single-agent training on bilevel problems to the decentralized setting is a natural generalization, and there has been a flurry of work studying decentralized bilevel optimization algorithms. However, it remains unknown how to design the distributed algorithm with sample complexity and convergence rate comparable to SGD for stochastic optimization, and at the same time without directly computing the exact Hessian or Jacobian matrices. In this paper we propose such an algorithm. More specifically, we propose a novel decentralized stochastic bilevel optimization (DSBO) algorithm that only requires first order stochastic oracle, Hessian-vector product and Jacobian-vector product oracle. The sample complexity of our algorithm matches the currently best known results for DSBO, while our algorithm does not require estimating the full Hessian and Jacobian matrices, thereby possessing to improved per-iteration complexity.
\end{abstract}

\section{Introduction}
Many machines learning problems can be formulated as a bilevel optimization problem of the form, \\
\begin{equation}\label{eq: bilevel_opt}
    \begin{aligned}
        &\min_{x\in\R^p}\quad \Phi(x) = f(x,y^*(x)) \\
        &\text{s.t.}\quad y^*(x) = \argmin_{y\in\mathbb{R}^{q}}g(x,y),
    \end{aligned}
\end{equation}
where we minimize the upper level function $f$ with respect to $x$ subject to the constraint that $y^*(x)$ is the minimizer of the lower level function. Its applications can range from classical optimization problems like compositional optimization \citep{chen2021closing} to modern machine learning problems such as reinforcement learning \citep{hong2020two}, meta learning \citep{snell2017prototypical, bertinetto2018meta, rajeswaran2019meta, ji2020convergence}, hyperparameter optimization \citep{pedregosa2016hyperparameter, franceschi2018bilevel}, etc. State-of-the-art bilevel optimization algorithms with non-asymptotic analyses include BSA \citep{ghadimi2018approximation}, TTSA \citep{hong2020two}, StocBiO \citep{ji2020convergence}, ALSET \citep{chen2021closing}, to name a few. 

Decentralized bilevel optimization aims at solving bilevel problems in a decentralized setting, which provides additional benefits such as faster convergence, data privacy preservation and robustness to low network bandwidth compared to the centralized setting and the single-agent training \citep{lian2017can}. For example, decentralized meta learning, which is a special case of decentralized bilevel optimization, arise naturally in the context of medical data analysis in the context of protecting patient privacy; see, for example, \citet{altae2017low, zhang2019metapred, kayaalp2022dif}. Motivated by such applications, the works of \citet{lu2022decentralized, chen2022decentralized, yang2022decentralized, gao2022stochastic} proposed and analyzed various decentralized stochastic bilevel optimization (DSBO) algorithms.

From a mathematical perspective, DSBO aims at solving the following problem in a distributed setting:
\begin{equation}\label{eq: dsbo_opt}
    \begin{aligned}
        &\min_{x\in\R^p}\quad \Phi(x) = \frac{1}{n}\sum_{i=1}^{n}f_i(x,y^*(x)) \\
        &\text{s.t.}\quad y^*(x) = \argmin_{y\in\R^{q}}g(x,y):= \frac{1}{n}\sum_{i=1}^{n}g_i(x,y),
    \end{aligned}
\end{equation}
where $x\in \mathbb{R}^{p}, y\in\mathbb{R}^{q}$. $f_i$ is possibly nonconvex and $g_i$ is strongly convex in $y$. Here $n$ denotes the number of agents, and agent $i$ only has access to stochastic oracles of $f_i,\ g_i$. The local objectives $f_i$ and $g_i$ are defined as:
\begin{align*}
    f_i(x,y) &= \mathbb{E}_{\phi\sim \cD_{f_i}}\left[F(x,y;\phi)\right],\\
    g_i(x,y) &= \mathbb{E}_{\xi\sim \cD_{g_i}}\left[G(x,y;\xi)\right].
\end{align*}
$\cD_{f_i}$ and $\cD_{g_i}$ represent the data distributions used to generate the objectives for agent $i$, and each agent only has access to $f_i$ and $g_i$. In practice we can replace the expectation by empirical loss, and then use samples to approximate the gradients in the updates. Existing works on DSBO require computing the full Hessian (or Jacobian) matrices in the hypergradient estimation, whose per-iteration complexity is $\cO(q^2)$ (or $\cO(pq)$). In problems like hyperparameter estimation, the lower level corresponds to learning the parameters of a model. When considering modern overparametrized models, the order of $q$ is hence extremely large. Hence, to reduce the per-iteration complexity, it is of great interest to have each iteration based only on Hessian-vector (or Jacobian-vector) products, whose complexity is $\cO(q)$ (or $\cO(p)$); see, for example, \citet{pearlmutter1994fast}.

\subsection{Our contributions}
Our contributions in this work are as follows. 
\begin{itemize}[noitemsep,leftmargin=*]
    \item We propose a novel method to estimate the global hypergradient. Our method estimates the product of the inverse of the Hessian and vectors directly, without computing the full Hessian or Jacobian matrices, and thus improves the previous overall (both computational and communication) complexity on hypergradient estimation from $\cO(Nq^2)$ to $\cO(Nq)$, where $N$ is the total steps of the hypergradient estimation subroutine.
    
    \item We design a DSBO algorithm (see Algorithm \ref{algo:DSBO}), and in Theorem~\ref{thm: main} and Corollary \ref{cor: samples} we show the sample complexity is of order $\cO(\epsilon^{-2}\log \frac{1}{\epsilon})$, which matches the currently well-known results of the single-agent bilevel optimization \citep{chen2021closing}. Our proof relies on weaker assumptions comparing to \citet{yang2022decentralized}, and is based on carefully combining moving average stochastic gradient estimation analyses with the decentralized bilevel algorithm analyses.

\item We conduct experiments on several machine learning problems. Our numerical results show the efficiency of our algorithm in both the synthetic and the real-world problems. Moreover, since our algorithm does not store the full Hessian or Jacobian matrices, both the space complexity and the communication complexity are improved comparing to \citet{chen2022decentralized, yang2022decentralized}. 
\end{itemize}

\subsection{Related work}
\textbf{Bilevel optimization.} Different from classical constrained optimization, bilevel optimization restricts certain variables to be the minimizer of the lower level function, which is more applicable in modern machine learning problems like meta learning \citep{snell2017prototypical, bertinetto2018meta, rajeswaran2019meta} and hyperparameter optimization \citep{pedregosa2016hyperparameter, franceschi2018bilevel}. In recent years, \citet{ghadimi2018approximation} gave the first non-asymptotic analysis of the bilevel stochastic approximation methods, which attracted much attention to study more efficient bilevel optimization algorithms including AID-based \citep{domke2012generic, pedregosa2016hyperparameter, gould2016differentiating, ghadimi2018approximation, grazzi2020iteration, ji2021bilevel}, ITD-based \citep{domke2012generic, maclaurin2015gradient, franceschi2018bilevel, grazzi2020iteration, ji2021bilevel}, and Neumann series-based \citep{ chen2021closing,hong2020two,ji2021bilevel} methods. These methods only require access to first order stochastic oracles and matrix-vector product (Hessian-vector and Jacobian-vector) oracles, which demonstrate great potential in solving bilevel optimization problems and achieve $\tilde \cO(\epsilon^{-2})$ sample complexity \citep{chen2021closing, arbel2021amortized} that matches the result of SGD for single level stochastic optimization ignoring the log factors. Moreover, under stronger assumptions and variance reduction techniques, better complexity bounds are obtained \citep{guo2021randomized, khanduri2021near, yang2021provably, chen2022single}.

\textbf{Decentralized optimization.} Extending optimization algorithms from a single-agent setting to a multi-agent setting has been studied extensively in recent years thanks to the modern parallel computing. Decentralized optimization, which does not require a central node, serves as an important part of distributed optimization. Because of data heterogeneity and the absence of a central node, decentralized optimization is more challenging and each node communicates with neighbors to exchange information and solve a finite-sum optimization problem. Under certain scenarios, decentralized algorithms are more preferable comparing to centralized ones since the former preserve data privacy \citep{ram2009asynchronous, yan2012distributed, wu2017decentralized, koloskova2020unified} and have been proved useful when the network bandwidth is low \citep{lian2017can}.

\textbf{Decentralized stochastic bilevel optimization.} To make bilevel optimization applicable in parallel computing, recent work started to focus on distributed stochastic bilevel optimization. FEDNEST \citep{tarzanagh2022fednest} and FedBiO \citep{fedbilevel} impose federated learning, which is essentially a centralized setting, on stochastic bilevel optimization. 
Existing work on DSBO can be classified to two categories: global DSBO and personalized DSBO. Problem \eqref{eq: dsbo_opt} that we consider in this paper is a global DSBO, where both lower-level and upper-level functions are not directly accessible to any local agent. Other works on global DSBO include \citet{chen2022decentralized, yang2022decentralized, gao2022stochastic}\footnote{Here we point out that although \citet{gao2022stochastic} claim that they solve the global DSBO, based on equations (2) and (3) in their paper (\url{https://arxiv.org/abs/2206.15025v1}), it is clear that they are only solving a special case of global DSBO problem. See appendix \ref{sec: MDBO_discussion} for detailed discussion.}. The personalized DSBO \citep{lu2022decentralized} replaces $y^*(x)$ by the local one $y_i^*(x) =\argmin_{y\in\R^q}g_i(x,y)$ in \eqref{eq: dsbo_opt}, which leads to
\begin{equation}\label{eq: pdsbo_opt}
    \begin{aligned}
        &\min_{x\in\R^p}\quad \Phi(x) = \frac{1}{n}\sum_{i=1}^{n}f_i(x,y_i^*(x)) \\
        &\text{s.t.}\quad y_i^*(x) = \argmin_{y\in\R^{q}}g_i(x,y), i=1,\ldots,n.
    \end{aligned}
\end{equation}
To solve global DSBO \eqref{eq: dsbo_opt}, \citet{chen2022decentralized} proposes a JHIP oracle to estimate the Jacobian-Hessian-inverse product while \citet{yang2022decentralized} introduces a Hessian-inverse estimation subroutine based on Neumann series approach which can be dated back to \citet{ghadimi2018approximation}. However, they both require computing the full Jacobian or Hessian matrices, which is extremely time-consuming when $q$ is large. In comparison, computing a Hessian-vector or Jacobian-vector product is more efficient in large-scale machine learning problems \citep{bottou2018optimization}, and is commonly used in vanilla bilevel optimization \citep{ghadimi2018approximation, ji2021bilevel, chen2021closing} to avoid computing the Hessian inverse.  In personalized DSBO \eqref{eq: pdsbo_opt}, local computation is sufficient to approximate $\nabla f_i(x,y_i^*(x))$, and thus does not require computing the  Hessian or Jacobian matrices and single-agent bilevel optimization methods can be directly incorporated in the distributed regime. In our paper we propose a novel algorithm that estimates the global hypergradient using only first-order oracle and matrix-vector products oracle. Based on this we further design our algorithm for solving DSBO that does not require to compute the full Jacobian or Hessian matrices. We summarize the results of aforementioned works and our results in Table \ref{table: compare}.

{\bf Notation.}
We denote by $\nabla f(x,y)$ and $\nabla^2f(x,y)$ the gradient and Hessian matrix of $f$, respectively. We use $\nabla_x f(x,y)$ and $\nabla_y f(x,y)$ to represent the gradients of $f$ with respect to $x$ and $y$, respectively. Denote by $\nabla_{xy}^2f(x,y)\in\mathbb{R}^{p\times q}$ the Jacobian matrix of $f$ and $\nabla_y^2f(x,y)$ the Hessian matrix of $f$ with respect to $y$. $\|\cdot\|$ denotes the $\ell_2$ norm for vectors and Frobenius norm for matrices, unless specified. $\mathbf{1}_n$ is the all one vector in $\mathbb{R}^{n}$, and $J_n = \mathbf{1}_n\mathbf{1}_n^{\top}$ is the $n\times n$ all one matrix. We use uppercase letters to represent the matrix that collecting all the variables (corresponding lowercase) as columns. For example $X_k = \left(x_{1,k}, ..., x_{n,k}\right),\ Y_k^{(t)} = \left(y_{1,k}^{(t)},...,y_{n,k}^{(t)}\right)$. We add an overbar to a letter to denote the average over all nodes. For example, $\bar{x}_k = \frac{1}{n}\sum_{i=1}^{n}x_{i,k},\ \bar{y}_{k}^{(t)} = \frac{1}{n}\sum_{i=1}^{n}y_{i,k}^{(t)}$.

\begin{table*}[t]
\caption{We compare our Algorithm \ref{algo:DSBO} (MA-DSBO) with existing distributed bilevel optimization algorithms: FEDNEST \citep{tarzanagh2022fednest}, SPDB \citep{lu2022decentralized}, DSBO-JHIP \citep{chen2022decentralized}, and GBDSBO \citep{yang2022decentralized}. The problem types include Federated Bilevel Optimization (FBO), Personalized-Decentralized Stochastic Bilevel Optimization (P-DSBO), and Global-Decentralized Stochastic Bilevel Optimization (G-DSBO). In the table we define $d = \max\left(p, q\right)$. 'Computation' (See Section \ref{sec: comp} for details) and 'Samples' represent the computational and sample complexity of finding an $\epsilon$-stationary point, respectively. $\tilde{\cO}$ hides the $\log(\frac{1}{\epsilon})$ factor. 
'Jacobian' refers to whether the algorithm requires computing full Hessian or Jacobian matrix. 'Mini-batch' refers to whether the algorithm requires their batch size depending on $\epsilon^{-1}$.}
\label{table: compare}
\vskip 0.15in
\begin{center}
\begin{small}
\begin{sc}
\begin{tabular}{ccccccc}
\toprule
\textbf{Algorithm}& \textbf{Problem} & \textbf{Computation}       & \textbf{Samples} & \textbf{Jacobian} & \textbf{Mini-batch} & \textbf{Network} \\ 
\midrule
\textbf{FEDNEST} & FBO  & $\tilde\cO(d\epsilon^{-2})$ & $\tilde{\cO}(\epsilon^{-2})$    & No &No &Centralized \\ 
\textbf{SPDB}      & P-DSBO  & $\tilde\cO(dn^{-1}\epsilon^{-2})$ & $\tilde{\cO}(n^{-1}\epsilon^{-2})$   & No  & Yes &Decentralized\\ 
\textbf{DSBO-JHIP}    & G-DSBO  &$\tilde\cO(pq\epsilon^{-3})$ & $\tilde\cO(\epsilon^{-3})$ & Yes & No &Decentralized\\
\textbf{GBDSBO}    & G-DSBO  & $\cO((q^2\log(\frac{1}{\epsilon}) + pq)n^{-1}\epsilon^{-2})$ & $\tilde{\cO}(n^{-1}\epsilon^{-2})$ & Yes & No &Decentralized\\ 
\textbf{MA-DSBO}    & G-DSBO  & $\tilde\cO(d \epsilon^{-2})$ & $\tilde{\cO}(\epsilon^{-2}) $ & No  & No &Decentralized\\
\bottomrule
\end{tabular}
\end{sc}
\end{small}
\end{center}
\vskip -0.1in
\end{table*}

\section{Preliminaries}
The following assumptions are used throughout this paper. They are standard assumptions that are made in the literature on bilevel optimization \citep{ghadimi2018approximation,hong2020two,chen2021closing,ji2021bilevel, huang2022efficiently} and decentralized optimization \citep{qu2017harnessing, nedic2017achieving, lian2017can, tang2018d}.

\begin{assumption}[Smoothness]\label{assump: lip_convexity}
    There exist positive constants $\mu_g$, $L_{f,0}$, $L_{f,1}$, $L_{g,1}$, $L_{g,2}$ such that for any $i$, functions $f_i$, $\nabla f_i$, $\nabla g_i$, $\nabla^2g_i$ are $L_{f,0}$, $L_{f,1}$, $L_{g,1}$, $L_{g,2}$ Lipschitz continuous respectively, and function $g_i$ is $\mu_g$-strongly convex in $y$.
\end{assumption}

\begin{assumption}[Network topology]\label{assump: W}
    The weight matrix $W = (w_{ij})\in \mathbb{R}^{n\times n}$ is symmetric and doubly stochastic, i.e.:
    \[
        W = W^{\top},\quad W\mathbf{1_n} = \mathbf{1_n},\quad w_{ij}\geq 0,\forall i,j,
    \]
    and its eigenvalues satisfy $1 = \lambda_1> \lambda_2\geq ...\geq \lambda_n$ and $\rho := \max\{|\lambda_2|, |\lambda_n|\} < 1$.
\end{assumption}
The weight matrix given in Assumption \ref{assump: W} characterizes the network topology by setting the weight parameter between agent $i$ and agent $j$ to be $w_{ij}$. The condition $\rho < 1$ is termed as 'spectral gap' \citep{lian2017can}, and is used in distributed optimization to ensure the decay of the consensus error, i.e., $\frac{\E\left[\|X_k - \bar{x}_k\bfonet_n\|^2\right]}{n}$, among the agents, which eventually guarantees the consensus among agents.

\begin{assumption}[Gradient heterogeneity]\label{assump: similarity_of_g}
    There exists a constant $\delta \geq 0$ such that for all $1\leq i\leq n, x\in\R^p, y\in \R^q,$
    \[
        \|\nabla_yg_i(x,y) - \frac{1}{n}\sum_{l=1}^{n}\nabla_y g_l(x,y)\| \leq \delta.
    \]
\end{assumption}
The above assumption is commonly used in distributed optimization literature (see, e.g., \citet{lian2017can}), and it indicates the level of  similarity between the local gradient and the global gradient. Moreover, it is weaker than the Assumption 3.4 (iv) of \citet{yang2022decentralized} which assumes that $\nabla_yg_i(x,y;\xi)$ has a bounded second moment. This is because the bounded second moment implies the boundedness of $\nabla_yg(x,y)$, as we have
\begin{align*}
    &\|\nabla_yg(x,y)\|^2 \\
    \leq &\E\left[\|\nabla_yg(x,y) - \nabla_yg(x,y;\xi)\|^2\right] + \|\nabla_yg(x,y)\|^2 \\
    = &\E\left[\|\nabla_yg(x,y;\xi)\|^2\right] \text{ -- uniformly bounded,}
\end{align*}
where the equality holds since we have $\E\left[\|X\|^2\right] = \E\left[\|X-\E\left[X\right]\|^2\right] + \|\E\left[X\right]\|^2$ for any random vector $X$. 
It directly gives the inequality in Assumption \ref{assump: similarity_of_g}. However Assumption \ref{assump: similarity_of_g} does not imply the boundedness of $\nabla_yg(x,y)$ (e.g., $g_i(x,y) = y^{\top}y$ for all $i$ satisfies Assumption \ref{assump: similarity_of_g} but does not have bounded gradient.)
\begin{assumption}[Bounded variance]\label{assump: stoc_derivatives}
     The stochastic derivatives, $\nabla f_i(x,y;\phi)$, $\nabla g_i(x,y;\xi)$, and $\nabla^2 g_i(x,y;\xi)$, are unbiased with bounded variances $\sigma_f^2$, $\sigma_{g,1}^2$, $\sigma_{g,2}^2$, respectively.
\end{assumption}
Note that we do not make any assumptions on whether the data distributions are heterogeneous or identically distributed.

\section{DSBO Algorithm with Improved Per-Iteration Complexity}\label{sec: algorithms}
We start with following standard result in the bilevel optimization literature \citep{ghadimi2018approximation, hong2020two,ji2020convergence,chen2021closing} that gives a closed form expression of the hypergradient $\nabla\Phi(x)$, making gradient-based bilevel optimization tractable.
\begin{lemma}
    Suppose Assumption \ref{assump: lip_convexity} holds. The hypergradient $\nabla\Phi(x)$ of \eqref{eq: dsbo_opt} takes the form:
    \begin{equation}\label{eq: hypergrad}
        \begin{aligned}
            \nabla\Phi(x) = &\frac{1}{n}\left(\sum_{i=1}^{n}\nabla_xf_i(\tilde{x})\right) \\
            &- \nabla_{xy}^2g(\tilde{x})\left(\nabla_y^2g(\tilde{x} )\right)^{-1}\left[\frac{1}{n}\left(\sum_{i=1}^{n}\nabla_yf_i(\tilde{x})\right)\right],
        \end{aligned}
    \end{equation}
    where $\tilde{x} = (x, y^*(x))$.
\end{lemma}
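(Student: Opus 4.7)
The plan is to establish the formula via the implicit function theorem combined with the chain rule, which is the standard route for proving hypergradient expressions in bilevel optimization. First I would verify that $y^*(x)$ is well-defined and continuously differentiable in $x$. Assumption \ref{assump: lip_convexity} gives $\mu_g$-strong convexity of $g(x,\cdot)$, so the lower-level minimizer exists and is unique for every $x$, and the first-order optimality condition
\begin{equation*}
    \nabla_y g(x, y^*(x)) = 0
\end{equation*}
holds identically in $x$. Smoothness of $\nabla g$ together with the non-singularity of $\nabla_y^2 g(x, y^*(x))$ (guaranteed by strong convexity) lets me invoke the implicit function theorem to conclude $y^*(\cdot)$ is continuously differentiable.

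Next I would differentiate the optimality condition in $x$. Writing the derivative of the vector-valued map $x \mapsto \nabla_y g(x, y^*(x))$ and using the chain rule yields
\begin{equation*}
    \bigl(\nabla_{xy}^2 g(\tilde{x})\bigr)^{\top} + \nabla_y^2 g(\tilde{x})\, \frac{\partial y^*(x)}{\partial x} = 0,
\end{equation*}
where $\tilde{x} = (x, y^*(x))$ and I adopt the paper's convention that $\nabla_{xy}^2 g \in \mathbb{R}^{p \times q}$. Solving for the Jacobian of $y^*$ gives
\begin{equation*}
    \frac{\partial y^*(x)}{\partial x} = -\bigl(\nabla_y^2 g(\tilde{x})\bigr)^{-1} \bigl(\nabla_{xy}^2 g(\tilde{x})\bigr)^{\top}.
\end{equation*}

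Now I would apply the chain rule directly to $\Phi(x) = \frac{1}{n}\sum_{i=1}^n f_i(x, y^*(x))$, obtaining
\begin{equation*}
    \nabla \Phi(x) = \frac{1}{n}\sum_{i=1}^n \nabla_x f_i(\tilde{x}) + \Bigl(\frac{\partial y^*(x)}{\partial x}\Bigr)^{\top} \cdot \frac{1}{n}\sum_{i=1}^n \nabla_y f_i(\tilde{x}),
\end{equation*}
and substitute the expression for $\partial y^*/\partial x$, using symmetry of $\nabla_y^2 g$ to drop the extra transpose. This produces exactly \eqref{eq: hypergrad}.

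Conceptually the argument is routine; the only care needed is dimensional bookkeeping (keeping straight that $\nabla_{xy}^2 g$ is $p \times q$ while the derivative of $\nabla_y g$ in $x$ is $q \times p$, their transpose relation), and justifying the applicability of the implicit function theorem. Both are immediate consequences of Assumption \ref{assump: lip_convexity}: smoothness supplies the continuous differentiability of $\nabla_y g$, and $\mu_g$-strong convexity guarantees the invertibility of $\nabla_y^2 g$ with a uniform lower bound on its smallest eigenvalue. Hence no genuine obstacle arises; the proof is a direct computation once these prerequisites are stated.
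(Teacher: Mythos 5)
Your proof is correct and follows exactly the standard implicit-function-theorem argument (differentiate the lower-level optimality condition, solve for the Jacobian of $y^*$, apply the chain rule) that underlies this result in the references the paper cites; the paper itself states the lemma without proof as a known result. The dimensional bookkeeping with $\nabla_{xy}^2 g \in \mathbb{R}^{p\times q}$ and the use of Hessian symmetry are handled correctly.
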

We also include smoothness properties of $\nabla \Phi(x)$ and $y^*(x)$ in Section \ref{sec: analysis} in the appendix.

\subsection{Main challenge}

As discussed in \citet{chen2022decentralized} and \citet{yang2022decentralized}, the main challenge in designing DSBO algorithms is to estimate the global hypergradient. This is challenging because of the data heterogeneity across agents, which leads to
\begin{align}\label{neq: mismatch}
    \begin{aligned}
        &\nabla_{xy}^2g(x, y^*(x))\left(\nabla_y^2g(x, y^*(x) )\right)^{-1}\\
        \neq& \frac{1}{n}\sum_{i=1}^{n}\nabla_{xy}^2g_i(x, y_i^*(x))\left(\nabla_y^2g_i(x,y_i^*(x))\right)^{-1},
    \end{aligned}
\end{align}
where $y_i^*(x) =\argmin_{y\in\R^q}g_i(x,y)$. This shows that simply averaging the local hypergradients does not give a good approximation to the global hypergradient. A decentralized approach should be designed to estimate the global hypergradient $\nabla\Phi(x)$. 

To this end, the JHIP oracle in \citet{chen2022decentralized} manages to estimate
\[
    \left(\sum_{i=1}^{n}\nabla_{xy}^2 g_i(x,y^*(x))\right) \left(\sum_{i=1}^{n}\nabla_y^2g_i(x,y^*(x))\right)^{-1}
\]
using decentralized optimization approach, and \citet{yang2022decentralized} proposed to estimate the global Hessian-inverse, i.e., 
\[
    \left(\sum_{i=1}^{n}\nabla_y^2g_i(x,y^*(x))\right)^{-1}
\]
via a Neumann series based approach. Instead of focusing on full matrices computation, we consider approximating
\begin{equation}\label{eq: z_expression}
    z = \left(\sum_{i=1}^{n}\nabla_y^2g_i(x,y^*(x))\right)^{-1}\left(\sum_{i=1}^{n}\nabla_yf_i(x,y^*(x))\right).
\end{equation}
According to \eqref{eq: hypergrad}, the global hypergradient is given by
\begin{equation}\label{eq: hypergrad_new_decompose}
    \nabla\Phi(x)=\frac{1}{n}\sum_{i=1}^{n}(\nabla_x f_i(x,y^*(x)) - \nabla_{xy}^2 g_i(x,y^*(x))z).
\end{equation}
From the above expression we know that as long as node $i$ can have a good estimate of $\nabla_x f_i(x,y^*(x))$ and $\nabla_{xy}^2g_i(x,y^*(x))z$, then on average the update will be a good approximation to the global hypergradient. More importantly, the process of estimating $z$ can avoid computing the full Hessian or Jacobian matrices.

\subsection{Hessian-Inverse-Gradient-Product oracle}\label{sec: HIGP_oracle}

Solving \eqref{eq: z_expression} is essentially a decentralized optimization with a strongly convex quadratic objective function. Suppose each agent only has access to $H_i\in\bS_{++}^{q\times q}$ and $b_i\in\R^q$, and all the agents collectively solve for
\begin{equation}\label{eq: higp_eq}
    \sum_{i=1}^{n}H_iz = \sum_{i=1}^{n}b_i,\ \text{or } z = \left(\sum_{i=1}^{n}H_i\right)^{-1}\left(\sum_{i=1}^{n}b_i\right).
\end{equation}
From an optimization perspective, the above expression is the optimality condition of:
\begin{equation}\label{eq: higp_opt}
    \min_{z\in\mathbb{R}^{q}}\frac{1}{n}\sum_{i=1}^{n}h_i(z),\ \text{where  }h_i(z)=\frac{1}{2}z^{\top}H_iz - b_i\T z.
\end{equation}
Hence we can design a decentralized algorithm to solve for $z$ without the presence of a central server. Based on this observation and \eqref{eq: hypergrad_new_decompose}, we present our Hessian-Inverse-Gradient Product oracle in Algorithm \ref{algo: HIGP_oracle}.

\begin{algorithm}[ht]
    \caption{Hessian-Inverse-Gradient Product oracle}\label{algo: HIGP_oracle}
    \begin{algorithmic}[1]
        \STATE {\bfseries Input:} $(H_{i,t}^{(k)}, b_{i,t}^{(k)})$, for $0\leq t\leq N$ accessible only to agent $i$. Stepsize $\gamma$, iteration number $N$, $d_{i,0}^{(k)} = -b_{i,0}^{(k)},\ s_{i,0}^{(k)} = - b_{i,0}^{(k)},\ \text{and } z_{i,0}^{(k)} = 0$
        
        \FOR{$t=0,1,...,N-1$}
            \FOR{$i=1,...,n$}
                \STATE $z_{i,t+1}^{(k)} = \sum_{j=1}^{n}w_{ij}z_{j,t}^{(k)} - \gamma d_{i,t}^{(k)}$, \\
                \STATE $s_{i,t+1}^{(k)} = H_{i,t+1}^{(k)}z_{i,t+1}^{(k)}- b_{i,t+1}^{(k)}$, \\
                \STATE $d_{i,t+1}^{(k)} =\sum_{j=1}^{n}w_{ij}d_{j,t}^{(k)} + s_{i,t+1}^{(k)} - s_{i,t}^{(k)}$
            \ENDFOR
        \ENDFOR
        \STATE {\bfseries Output:}{$z_{i,N}^{(k)} $ on each node.}
    \end{algorithmic}
\end{algorithm}
It is known that vanilla decentralized gradient descent (DGD) with a constant stepsize only converges to a neighborhood of the optimal solution even under the deterministic setting \citep{yuan2016convergence}. Therefore, one must use diminishing stepsize in DGD, and this leads to the sublinear convergence rate even when the objective function is strongly convex. To resolve this issue, there are various decentralized algorithms with a fixed stepsize \citep{xu2015augmented, shi2015extra, di2016next, nedic2017achieving, qu2017harnessing} achieving linear convergence on a strongly convex function  in the deterministic setting. Among them, one widely used technique is the gradient tracking method \citep{xu2015augmented, qu2017harnessing, nedic2017achieving, pu2021distributed}, which is also incorporated in our Algorithm \ref{algo: HIGP_oracle}. Instead of using the local stochastic gradient in line 4 of Algorithm \ref{algo: HIGP_oracle}, we maintain another set of variables $d_{i,t+1}^{(k)}$ in line 6 as the gradient tracking step. We will utilize the linear convergence property of gradient tracking in our convergence analysis.

\begin{algorithm}[ht]
    \begin{algorithmic}[1]
        \caption{Hypergradient Estimation}\label{algo: Hypergrad}
        \STATE {\bfseries Input:} Samples $\phi = (\phi_{i,0},...,\phi_{i,N}),\ \xi = (\xi_{i,0}, ..., \xi_{i,N})$ on node $i$.
            \STATE Run Algorithm \ref{algo: HIGP_oracle} with \\
            \STATE $H_{i,t}^{(k)} = \nabla_y^2g_i(x_{i,k},y_{i,k}^{(T)};\xi_{i,t}),$ \\ \STATE $b_{i,t}^{(k)} = \nabla_{y}f_i(x_{i,k},y_{i,k}^{(T)};\phi_{i,t})$ \\
            \STATE to get $z_{i,N}^{(k)}$. \\
            \STATE Set $u_{i,k} = \nabla_x f_i(x_{i,k},y_{i,k}^{(T)};\phi_{i,0})$ \\
            \STATE \qquad\qquad $-\nabla_{xy}^2 g_i(x_{i,k},y_{i,k}^{(T)};\xi_{i,0} )z_{i,N}^{(k)}$. \\
        \STATE {\bfseries Output:} $u_{i,k}$ on node $i$.
    \end{algorithmic}
\end{algorithm}

Note that for simplicity we write $H_{i,t}^{(k)}=\nabla_y^2g_i(x_{i,k},y_{i,k}^{(T)};\xi_{i,t})$ in line 3 of Algorithm \ref{algo: Hypergrad}, however, the real implementation only requires Hessian-vector products, as shown in Algorithm \ref{algo: HIGP_oracle}, and we do not need to compute the full Hessian.

\subsection{Decentralized Stochastic Bilevel Optimization}

Now we are ready to present our DSBO algorithm with the moving average technique, which we refer to as the MA-DSBO algorithm. 
\begin{algorithm}[ht]
    \begin{algorithmic}[1]
        \caption{MA-DSBO Algorithm}\label{algo:DSBO}
    	\STATE {\bfseries Input:} Stepsizes $\alpha_k, \beta_k$, iteration numbers $K, T, N$, $y_{i,k}^{(0)} = 0$, and $x_{i,0} = r_{i,0} = 0$. 
    	\FOR{$k=0,1,...,K-1$}
    	    \STATE $y_{i,k}^{(0)} = y_{i,k-1}^{(T)}$. \\
    		\FOR{$t=0,1,..., T-1$}
    		    \FOR{$i=1,...,n$}
        		    \STATE $y_{i,k}^{(t+1)} = \sum_{j=1}^{n}w_{ij}y_{j,k}^{(t)} -\beta_k v_{i,k}^{(t)}$ \text{ with } $v_{i,k}^{(t)} = \nabla_yg_i(x_{i,k}, y_{i,k}^{(t)}; \tilde{\xi}_{i,k}^{(t)})$
    		    \ENDFOR
            \ENDFOR
            \STATE Run Algorithm \ref{algo: Hypergrad} and set the output as $u_{i,k}$. \\
            \FOR{$i=1,...,n$}
                \STATE $x_{i,k+1} = \sum_{j=1}^{n}w_{ij}x_{j,k} - \alpha_k r_{i,k}$. \\
        		\STATE $r_{i,k+1} = (1-\alpha_k)r_{i,k} + \alpha_ku_{i,k}$.
            \ENDFOR
        \ENDFOR
	\STATE {\bfseries Output:} $\bar{x}_K = \frac{1}{n}\sum_{i=1}^{n}x_{i,K}.$
    \end{algorithmic}
\end{algorithm}
In Algorithm \ref{algo:DSBO} we adopt the basic structure of double-loop bilevel optimization algorithm \citep{ghadimi2018approximation, ji2021bilevel, chen2021closing} -- we first run $T$-step inner loop (line 4-8) to obtain a good approximation of $y^*$. Next, we run Algorithm \ref{algo: Hypergrad} to estimate the hypergradient. To reduce the order of the bias in hypergradient estimation error (see Section \ref{sec: MA} for details), we introduce the moving average update to maintain another set of variables $r_{i,k}$ as the update direction of $x$. The using of the moving average update helps reduce the order of bias in the stochastic gradient estimate. It is worth noting that similar techniques have been used in the context of nested stochastic composition optimization in~\citet{ghadimi2020single, balasubramanian2022stochastic}. 
Note that all communication steps of our Algorithms (lines 4 and 6 of Algorithm \ref{algo: HIGP_oracle}, lines 6 and 11 of Algorithm \ref{algo:DSBO}) only include sending (resp. receiving) vectors to (resp. from) neighbors, which greatly reduce the per-iteration communication complexity from $\max\{pq, q^2\}$ of GBDSBO (see line 8 and 11 of Algorithm 1 in \citet{yang2022decentralized}).) to $\max\{p, q\}$.

We now introduce our notion of convergence. Specifically, the $\epsilon$-stationary point of \eqref{eq: pdsbo_opt} is defined as follows. 
\begin{defi}
    For a sequence $\{\bar x_k\}_{k=0}^{K}$ generated by Algorithm \ref{algo:DSBO}, if  $\min_{0\leq k\leq K} \E\left[\|\nabla\Phi(\bar{x}_k)\|^2\right]\leq \epsilon$ for some positive integer $K$, then we say that we find an $\epsilon$-stationary point of \eqref{eq: pdsbo_opt}.
\end{defi}
The above notion of stationary point is commonly used in decentralized non-convex stochastic optimization \citep{lian2017can}. When $\epsilon=0$, it indicates that the hypergradient at some iterate $\bar{x}_k$ is zero. 
The convergence result of Algorithm \ref{algo:DSBO} is given in Theorem \ref{thm: main}.

\begin{theorem}\label{thm: main}
Suppose Assumptions \ref{assump: lip_convexity}, \ref{assump: W}, \ref{assump: similarity_of_g}, and \ref{assump: stoc_derivatives} hold. There exist constants \footnote{The constants are independent of $K$ and the details are given in the appendix.} $0<c_1<c_2$ such that in Algorithm \ref{algo:DSBO} if we set $\gamma\in (c_1, c_2)$, $T\geq 1$, and
    \begin{align*}
        \alpha_k \equiv \Theta\left(\frac{1}{\sqrt{K}}\right),\ \beta_k \equiv \Theta\left(\frac{1}{\sqrt{K}}\right),\ N = \Theta\left(\log K\right), 
    \end{align*}
    then we have
    \[
        \begin{aligned}
            \min_{0\leq k\leq K}\E\left[\|\nabla\Phi(\bar{x}_k)\|^2\right] &= \cO\left(\frac{1}{\sqrt{K}}\right), \\
            \min_{0\leq k\leq K}\frac{\E\left[\|X_k - \bar{x}_k\bfonet_n\|^2\right]}{n} &= \cO\left(\frac{1}{K}\right).
        \end{aligned}
    \]
\end{theorem}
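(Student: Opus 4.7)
The plan is to carry through a Lyapunov/potential argument that couples four error quantities: the outer consensus error $\mathcal{E}^x_k := \E\|X_k - \bar x_k \bfonet_n\|^2/n$; the inner-loop error $\mathcal{E}^y_k$ combining $\E\|Y_k^{(T)} - \bar y_k^{(T)}\bfonet_n\|^2/n$ with $\E\|\bar y_k^{(T)} - y^*(\bar x_k)\|^2$; the HIGP-oracle error $\mathcal{E}^z_k := \tfrac{1}{n}\sum_i \E\|z_{i,N}^{(k)} - z_k^\star\|^2$ where $z_k^\star$ solves \eqref{eq: higp_eq} at $(\bar x_k, y^*(\bar x_k))$; and the moving-average error $\mathcal{E}^r_k := \E\|\bar r_k - \nabla\Phi(\bar x_k)\|^2$. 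I would show that a non-negative linear combination $\Psi_k$ of $\E\Phi(\bar x_k)$ and these four quantities satisfies $\Psi_{k+1} \le \Psi_k - c\,\alpha_k\,\E\|\nabla\Phi(\bar x_k)\|^2 + \cO(\alpha_k^2) + \cO(\alpha_k\bar\rho^N)$ for constants $c,\bar\rho\in(0,1)$, whence telescoping with $\alpha_k,\beta_k=\Theta(1/\sqrt{K})$ and $N=\Theta(\log K)$ delivers both rates.

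First I would invoke the $L_\Phi$-smooth descent lemma for $\Phi$ (smoothness of $\nabla\Phi$ and $y^*$ is the appendix result cited right after \eqref{eq: hypergrad}) along $\bar x_{k+1}=\bar x_k - \alpha_k\bar r_k$, and split $\langle\nabla\Phi(\bar x_k),\bar r_k\rangle$ by Young's inequality to obtain
\[
\E\Phi(\bar x_{k+1}) \le \E\Phi(\bar x_k) - \tfrac{\alpha_k}{2}\E\|\nabla\Phi(\bar x_k)\|^2 + \tfrac{\alpha_k}{2}\mathcal{E}^r_k + \cO(\alpha_k^2)\E\|\bar r_k\|^2,
\]
so everything hinges on $\mathcal{E}^r_k$. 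From $\bar r_{k+1} = (1-\alpha_k)\bar r_k + \alpha_k\bar u_k$ and Young's inequality I would derive
\[
\mathcal{E}^r_{k+1} \le (1-\tfrac{\alpha_k}{2})\mathcal{E}^r_k + \cO(\alpha_k)\,\E\|\bar u_k - \nabla\Phi(\bar x_k)\|^2 + \cO(\alpha_k)\E\|\bar r_k\|^2,
\]
where the final term absorbs $\|\nabla\Phi(\bar x_{k+1})-\nabla\Phi(\bar x_k)\|^2\le L_\Phi^2\alpha_k^2\|\bar r_k\|^2$ amplified by $\alpha_k^{-1}$. This recursion is precisely where the moving average pays off: a bias of order $1$ in $\bar u_k$ becomes a bias of order $\alpha_k$ after smoothing into $\bar r_k$, as in the single-agent stochastic composition optimization analyses of \citet{ghadimi2020single}.

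Using \eqref{eq: hypergrad_new_decompose} together with the Lipschitz properties of $\nabla_x f_i$ and $\nabla_{xy}^2 g_i$ from Assumption \ref{assump: lip_convexity}, the instantaneous hypergradient error decomposes as $\E\|\bar u_k - \nabla\Phi(\bar x_k)\|^2 = \cO(\mathcal{E}^x_k + \mathcal{E}^y_k + \mathcal{E}^z_k) + \cO(\sigma_f^2 + \sigma_{g,2}^2)$. The three auxiliary errors satisfy their own recursions: $\mathcal{E}^x_{k+1} \le \tfrac{1+\rho^2}{2}\mathcal{E}^x_k + \cO(\alpha_k^2)\E\|R_k - \bar r_k\bfonet_n\|^2/n$ follows from the spectral-gap property of $W$ applied to $X_{k+1}=X_kW-\alpha_kR_k$; $\mathcal{E}^y_k$ contracts under the $T$ decentralized SGD steps on the $\mu_g$-strongly convex $g(x,\cdot)$, with the heterogeneity $\delta^2$ of Assumption \ref{assump: similarity_of_g} and variance $\sigma_{g,1}^2$ of Assumption \ref{assump: stoc_derivatives} entering the residual; and $\mathcal{E}^z_k = \cO(\bar\rho^N) + \cO(\gamma(\sigma_f^2+\sigma_{g,2}^2)) + \cO(\mathcal{E}^x_k + \mathcal{E}^y_k)$ from a gradient-tracking analysis of Algorithm \ref{algo: HIGP_oracle} on the strongly convex quadratic \eqref{eq: higp_opt}, the last term accounting for the fact that $H_{i,t}^{(k)}$ and $b_{i,t}^{(k)}$ are sampled at $(x_{i,k},y_{i,k}^{(T)})$ rather than at $(\bar x_k,y^*(\bar x_k))$. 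Assembling these into $\Psi_k$ with coefficients tuned so that the cross terms absorb into the contractive ones, telescoping, and using $\bar\rho^N=\cO(1/K)$ yields the stated bounds.

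The main obstacles are twofold. First, the HIGP oracle is evaluated at a drifting point and its stochastic variance does not decay with $N$; coordinating the four recursions so that this residual variance is absorbed by the outer moving average requires choosing the potential's weights to satisfy a system of simultaneous inequalities, rather than handling each error in isolation. Second, one must verify that the constant interval $(c_1,c_2)$ for $\gamma$ is admissible \emph{uniformly} over $k$ and $t$ using only Assumption \ref{assump: lip_convexity}: each realized Hessian $H_{i,t}^{(k)}$ has spectrum in $[\mu_g,L_{g,1}]$ almost surely, which yields a data-independent $\gamma$ preserving a contraction rate $\bar\rho<1$ for gradient tracking on \eqref{eq: higp_opt}. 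Once these pieces are in place, the descent step yields $\cO(1/\sqrt{K})$ on $\E\|\nabla\Phi(\bar x_k)\|^2$ and the consensus recursion automatically gives $\cO(1/K)$ on $\mathcal{E}^x_k$.
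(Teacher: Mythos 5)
Your overall skeleton (descent lemma for $\Phi$, a recursion for the moving-average error, consensus recursions for $X_k$ and $Y_k^{(t)}$, a gradient-tracking analysis of Algorithm \ref{algo: HIGP_oracle} with a constant stepsize justified by $\mu_g I\preceq H_{i,t}^{(k)}\preceq L_{g,1}I$) matches the paper's Lemmas \ref{lem: r_bar_sum}, \ref{lem: r_and_grad}, \ref{lem: XY_cons}, \ref{lem: y_error}, \ref{ineq: stepsize_for_higp}. But there is a genuine gap in how you treat the hypergradient estimation error, and as written your argument does not deliver the $\cO(1/\sqrt K)$ rate. You define $\mathcal{E}^z_k$ as the full mean-squared error $\frac1n\sum_i\E\|z_{i,N}^{(k)}-z_k^\star\|^2$ and feed the full error $\E\|\bar u_k-\nabla\Phi(\bar x_k)\|^2$ into the recursion for $\mathcal{E}^r_{k+1}$ at order $\alpha_k$. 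Since $\gamma$ is a constant and no mini-batching is used, both quantities have non-vanishing floors: $\mathcal{E}^z_k$ retains an $\Omega(1)$ residual from the stochastic gradient-tracking noise, and $\E\|\bar u_k-\nabla\Phi(\bar x_k)\|^2=\Omega(\sigma^2/n)$. A recursion of the form $\mathcal{E}^r_{k+1}\le(1-\alpha_k/2)\mathcal{E}^r_k+\cO(\alpha_k)\cdot\cO(1)$ equilibrates at $\mathcal{E}^r_k=\cO(1)$, and after telescoping the descent lemma you get $\min_k\E\|\nabla\Phi(\bar x_k)\|^2=\cO(1)$, not $\cO(1/\sqrt K)$. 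Your hope that this "residual variance is absorbed by the outer moving average" cannot work as stated: the moving average damps only the \emph{zero-mean} part of $\bar u_k$ (which enters the $\mathcal{E}^r$ recursion at order $\alpha_k^2$, as in Lemma \ref{lem: r_and_grad}); a term entering at order $\alpha_k$ is not damped. You also have the mechanism backwards when you say "a bias of order $1$ in $\bar u_k$ becomes a bias of order $\alpha_k$ after smoothing" — an order-one systematic bias would persist in $\bar r_k$; it is an order-one \emph{variance} that is reduced to an $\cO(\alpha_k)$ contribution.

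The missing idea, which is the crux of the paper's proof, is to split bias from variance and to observe that the \emph{conditional bias} of the HIGP oracle has no noise floor. Because fresh i.i.d. samples $(H_{i,t}^{(k)},b_{i,t}^{(k)})$ are drawn at each inner step and the updates are affine in them, the conditional means $\dot z_{i,t}^{(k)}=\E[z_{i,t}^{(k)}|\F_k]$ satisfy exactly the \emph{deterministic} gradient-tracking recursion on the quadratic \eqref{eq: higp_opt}; hence $\|\E[\bar z_N^{(k)}|\F_k]-z_*^{(k)}\|^2$ and $\|\E[Z_N^{(k)}-\bar z_N^{(k)}\bfonet|\F_k]\|^2$ decay linearly in $N$ with no $\cO(\gamma\sigma^2)$ term (Lemmas \ref{lem: dsgd_quadratic}--\ref{lem: higp_main}, where the constant $\|c\|$ vanishes in the noiseless case). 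Consequently one bounds $\E[\|\E[\bar u_k|\F_k]-\nabla\Phi(\bar x_k)\|^2]=\cO\bigl((1-\tfrac{2\gamma\mu_g}{3})^{N-1}+\mathcal{E}^x_k+\mathcal{E}^y_k\bigr)$ with $N=\Theta(\log K)$ killing the first term, and only this bias enters at order $\alpha_k$; the $\cO(1)$ variance $\sigma_u^2$ enters at order $\alpha_k^2$ and sums to $\cO(1)$ over $K$ steps. Without this decomposition you would be forced to take $\gamma\to0$ or $N$ polynomial in $1/\epsilon$ or use mini-batches, none of which is compatible with the parameter choices in the theorem.
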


Note that this theorem indicates that the consensus error is of order $\cO\left(\frac{1}{K}\right)$, and for any positive constant $\epsilon$, the iteration complexity of Algorithm \ref{algo:DSBO} for obtaining an $\epsilon$-stationary point of \eqref{eq: dsbo_opt} is $\cO(\epsilon^{-2})$. Moreover, we have the following corollary that gives the sample complexity of our algorithm.
\begin{corollary}\label{cor: samples}
    Suppose the conditions of Theorem \ref{thm: main} hold. For any $\epsilon>0$, if we set $K=\cO\left(\epsilon^{-2}\right),\ N = \Theta(\log \frac{1}{\epsilon}),$ and $T=1$, then in Algorithm \ref{algo:DSBO} the sample complexity to find an $\epsilon$-stationary point is $\cO(\epsilon^{-2}\log(\frac{1}{\epsilon}))$. 
\end{corollary}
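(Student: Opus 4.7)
The plan is to derive the corollary as a direct consequence of Theorem \ref{thm: main} by substituting the prescribed parameter choices and then carefully counting the stochastic oracle queries per outer iteration.

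First I would verify that the parameter choices $K = \mathcal{O}(\epsilon^{-2})$, $N = \Theta(\log \tfrac{1}{\epsilon})$, $T = 1$ are consistent with the hypotheses of Theorem \ref{thm: main}. The theorem requires $\alpha_k, \beta_k = \Theta(1/\sqrt{K})$ and $N = \Theta(\log K)$, together with $T \geq 1$ and $\gamma$ in a fixed interval $(c_1,c_2)$. With $K = \mathcal{O}(\epsilon^{-2})$ we have $\log K = \Theta(\log(1/\epsilon))$, so the choice $N = \Theta(\log(1/\epsilon))$ meets the condition $N = \Theta(\log K)$. Plugging $K = \mathcal{O}(\epsilon^{-2})$ into the bound $\min_{0\leq k\leq K}\mathbb{E}[\|\nabla\Phi(\bar{x}_k)\|^2] = \mathcal{O}(1/\sqrt{K})$ from Theorem \ref{thm: main} then yields $\min_{k}\mathbb{E}[\|\nabla\Phi(\bar{x}_k)\|^2] = \mathcal{O}(\epsilon)$, which means some iterate $\bar{x}_k$ is an $\epsilon$-stationary point of \eqref{eq: dsbo_opt}.

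Next I would count the number of stochastic samples drawn per outer iteration $k$ on each agent. Inspecting Algorithm \ref{algo:DSBO}, the inner loop on line 4--8 draws one fresh sample $\tilde\xi_{i,k}^{(t)}$ per step, contributing $T$ samples per outer iteration. The call to Algorithm \ref{algo: Hypergrad} (line 9) draws, for $t = 0, 1, \ldots, N$, one sample $\phi_{i,t}$ (used for $\nabla_y f_i$ and, at $t=0$, for $\nabla_x f_i$) and one sample $\xi_{i,t}$ (used for $\nabla_y^2 g_i$ and, at $t=0$, for $\nabla_{xy}^2 g_i$), contributing $\Theta(N)$ samples. Hence each outer iteration uses $T + \Theta(N)$ samples per agent.

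Multiplying by $K$ and substituting the prescribed schedule gives a total per-agent sample complexity of
\[
K\bigl(T + \Theta(N)\bigr) \;=\; \mathcal{O}(\epsilon^{-2}) \cdot \bigl(1 + \Theta(\log \tfrac{1}{\epsilon})\bigr) \;=\; \mathcal{O}\bigl(\epsilon^{-2}\log \tfrac{1}{\epsilon}\bigr),
\]
which is the desired bound. Since the proof is essentially a substitution plus a per-iteration sample-counting argument, there is no genuinely hard step; the only thing to be careful about is ensuring that $N = \Theta(\log(1/\epsilon))$ indeed matches the $\Theta(\log K)$ requirement of Theorem \ref{thm: main} after the substitution $K = \mathcal{O}(\epsilon^{-2})$, and that Hessian/Jacobian-vector evaluations are counted as a single stochastic oracle call rather than as an $\mathcal{O}(q)$-sized batch of scalar samples.
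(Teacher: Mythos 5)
Your proposal is correct and follows essentially the same route as the paper, which disposes of this corollary in one line at the end of the proof of Lemma~\ref{lem: last}: set $K=\Theta(\epsilon^{-2})$ so that the $\cO(1/\sqrt{K})$ bound of Theorem~\ref{thm: main} gives an $\epsilon$-stationary point, and then observe that each outer iteration consumes $T+\Theta(N)$ stochastic oracle calls, yielding $K(T+\Theta(N))=\cO(\epsilon^{-2}\log\frac{1}{\epsilon})$ in total. Your explicit per-iteration sample count (the $T$ inner-loop draws plus the $2(N+1)$ samples fed to Algorithm~\ref{algo: Hypergrad}) and your remark that matrix-vector products count as single oracle calls simply make explicit what the paper leaves implicit.
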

It is worth noting that $T\geq 1$ in Theorem \ref{thm: main} implies, to some extent, that by setting a single timescale, more inner loop iterations will not help improve the convergence result in terms of $K$. This observation partially answers the decentralized version of the question `Will Bilevel Optimizers Benefit from Loops?' mentioned in the title of \citet{ji2022will}. It is interesting to study how setting $T$ dependent on other problem parameters will improve the dependency on problem parameters in the final convergence rate. The hypergradient estimation algorithms (i.e., HIGP oracle and Algorithm \ref{algo: Hypergrad}) provide an additional $\cO(\log \frac{1}{\epsilon})$ factor in the sample complexity, which matches \citet{chen2021closing}. To further remove the log factor, \citet{arbel2021amortized} applies warm start to hypergradient estimation and uses mini-batch method (whose batch sizes are dependent on $\epsilon^{-1}$) to reduce this complexity and eventually obtain $\cO(\epsilon^{-2})$. It would be interesting to study how to apply the warm start strategy to remove the log factor in our complexity bound without using mini-batch method. One restriction of Theorem \ref{thm: main} is that we do not obtain the convergence rate $\cO(\frac{1}{\sqrt{nK}})$, i.e., the linear speedup in terms of the number of the agents. The recent work of \citet{yang2022decentralized} achieves linear speedup. However, some of their assumptions are restrictive (see Section \ref{sec: discussion} for a detailed discussion). Besides, according to Table \ref{table: compare}, our Algorithm is more efficient and preferable when $\min\{p, q\} > n$ since we improve the per-iteration computational and communication complexity from $\max\{pq, q^2\}$ in \citet{yang2022decentralized} to $\max\{p, q\}$. It would be interesting to study how to incorporate Jacobian-computing-free algorithm in DSBO under the mild assumptions without affecting linear speedup.

\subsection{Consequences for Decentralized Stochastic Compositional Optimization}
Note that our algorithm can be used to solve Decentralized Stochastic Compositional Optimization (DSCO) problem:
\begin{equation}\label{eq: dco_opt}
    \min_{x\in\R^p}\quad \Phi(x) = \frac{1}{n}\sum_{i=1}^{n}f_i\left(\frac{1}{n}\sum_{j=1}^{n}g_j(x)\right),
\end{equation}
which can be written in a bilevel formulation:
\begin{equation}\label{eq: dco_opt_bi}
    \begin{aligned}
        \min_{x\in\R^p}\quad \Phi(x) &= \frac{1}{n}\sum_{i=1}^{n}f_i(y^*(x)) \\
        \text{s.t.}\quad y^*(x) &= \argmin_{y\in\R^{q}}\frac{1}{n}\sum_{i=1}^{n}\left(\frac{1}{2}y\T y - g_i(x)\T y\right),
    \end{aligned}
\end{equation}
To solve DSCO, \citet{zhao2022numerical} proposes D-ASCGD and its compressed version. Both of them have $\cO(\epsilon^{-2})$ sample complexity. However, their algorithm requires stronger assumptions (see Assumption 1 (a) in \citet{zhao2022numerical}) and needs to compute full Jacobians (i.e., $\nabla g_i(x;\xi)$), which lead to $\cO(pq\epsilon^{-2})$ computational complexity. By using our Algorithm \ref{algo:DSBO}, we can obtain $\tilde\cO(\max(p, q)\epsilon^{-2})$ computational complexity, which is preferable in high dimensional problems. We state the result formally in the corollary below; the proof is immediate. 
\begin{corollary}\label{cor: DSCO}
    Suppose the conditions of Theorem \ref{thm: main} hold. For any $\epsilon>0$, if we set $K=\cO\left(\epsilon^{-2}\right),\ N = \Theta(\log \frac{1}{\epsilon}),$ and $T=1$, then the sample complexity of using Algorithm \ref{algo:DSBO} to find an $\epsilon$-stationary point of Problem \eqref{eq: dco_opt_bi} is $\cO(\epsilon^{-2}\log(\frac{1}{\epsilon}))$, and the computational complexity is $\tilde\cO(\max(p, q)\epsilon^{-2})$.
\end{corollary}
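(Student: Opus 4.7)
The claim follows from a direct application of Theorem~\ref{thm: main} and Corollary~\ref{cor: samples} to the bilevel reformulation \eqref{eq: dco_opt_bi}, plus a per-iteration cost accounting that exploits the special structure of the reformulation. I would proceed in three steps, each essentially a verification rather than a new argument.

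\emph{Step 1: Equivalence of \eqref{eq: dco_opt} and \eqref{eq: dco_opt_bi}.} The lower-level objective can be rewritten as $\tfrac{1}{2}y\T y - \bigl(\tfrac{1}{n}\sum_j g_j(x)\bigr)\T y$, which is $1$-strongly convex in $y$ with unique minimizer $y^*(x) = \tfrac{1}{n}\sum_j g_j(x)$. Plugging this into the upper level reproduces $\Phi(x) = \tfrac{1}{n}\sum_i f_i(y^*(x))$ of \eqref{eq: dco_opt}, so the two formulations have the same stationary points and gradients.

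\emph{Step 2: Verification of Assumptions \ref{assump: lip_convexity}--\ref{assump: stoc_derivatives}.} Let $\tilde g_i(x,y):=\tfrac{1}{2}y\T y - g_i(x)\T y$ denote the bilevel lower-level summand. Then $\nabla_y \tilde g_i(x,y) = y - g_i(x)$, $\nabla_y^2 \tilde g_i(x,y) = I_q$, and $\nabla_{xy}^2 \tilde g_i(x,y) = -\nabla g_i(x)$. Under standard DSCO regularity (Lipschitz continuity of $g_i$ and $\nabla g_i$, and Lipschitz smoothness of $f_i$), Assumption~\ref{assump: lip_convexity} holds with $\mu_g = 1$ and constants $L_{g,1}, L_{g,2}$ expressible in terms of the Lipschitz constants of $g_i$ and $\nabla g_i$; Assumption~\ref{assump: similarity_of_g} reduces to the standard heterogeneity bound $\|g_i(x) - \tfrac{1}{n}\sum_j g_j(x)\| \le \delta$ on the compositional mappings; and Assumption~\ref{assump: stoc_derivatives} follows from unbiased, bounded-variance stochastic oracles for $g_i$ and $\nabla g_i$. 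Therefore the hypotheses of Theorem~\ref{thm: main} are met, and Corollary~\ref{cor: samples} immediately yields the sample complexity $\cO(\epsilon^{-2}\log(1/\epsilon))$ with the stated parameter choices.

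\emph{Step 3: Per-iteration computational cost.} Because $\nabla_y^2 \tilde g_i = I_q$, every ``Hessian-vector product'' inside Algorithm~\ref{algo: HIGP_oracle} is just the identity applied to a vector in $\R^q$, at cost $\cO(q)$. The cross-Hessian-vector product $\nabla_{xy}^2 \tilde g_i(x,y)z = -\nabla g_i(x) z$ is a single Jacobian-vector product of $g_i$, which is computable at cost $\cO(\max(p,q))$ via reverse-mode automatic differentiation \citep{pearlmutter1994fast}. The gradient evaluations of $f_i$ and $g_i$ in each inner and outer step are likewise $\cO(\max(p,q))$. Multiplying by the total iteration count $\cO(\epsilon^{-2}\log(1/\epsilon))$ from Step~2 yields the computational complexity $\tilde\cO(\max(p,q)\epsilon^{-2})$.

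\textbf{Anticipated obstacle.} None of the three steps is technically deep, which is exactly why the paper labels the proof immediate. The only delicate bookkeeping is in Step~2, where one must express the induced constants $(L_{f,0}, L_{f,1}, L_{g,1}, L_{g,2}, \mu_g)$ of $\tilde g_i$ in terms of the original DSCO constants in order to confirm that a valid $\gamma \in (c_1, c_2)$ from Theorem~\ref{thm: main} actually exists for the reformulation. This is a purely algebraic consistency check and introduces no new convergence analysis.
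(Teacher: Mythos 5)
Your proposal is correct and follows the same route the paper intends when it declares the proof ``immediate'': apply Theorem~\ref{thm: main} and Corollary~\ref{cor: samples} to the bilevel reformulation \eqref{eq: dco_opt_bi} and note that, since $\nabla_y^2\tilde g_i = I_q$ and the cross term is a Jacobian-vector product, each iteration costs $\cO(\max(p,q))$. The only caveat is in your Step~2: deriving Assumption~\ref{assump: lip_convexity} for $\tilde g_i$ from ``standard DSCO regularity'' is slightly delicate (e.g., $\nabla_x\tilde g_i(x,y)=-\nabla g_i(x)\T y$ is not globally Lipschitz in $y$ unless $\nabla g_i$ is bounded), but this does not affect the corollary as stated, whose hypothesis already assumes the conditions of Theorem~\ref{thm: main} hold for the reformulated problem.
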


\subsection{Proof sketch}\label{sec: proof_sketch}
In this section we briefly introduce a sketch of our proof for Theorem \ref{thm: main} as well as the ideas of the algorithm design. Throughout our analysis, we define the filtration as 
\[
    \F_k = \sigma\left(\bigcup_{i=1}^{n}\{y_{i,0}^{(T)}, ..., y_{i,k}^{(T)}, x_{i,0},...,x_{i,k}, r_{i,0},...,r_{i,k}\}\right).
\]

\subsubsection{Moving average method}\label{sec: MA}
The moving average method used in line 12 of Algorithm \ref{algo:DSBO} serves as a key step in setting up the convergence analysis framework. We focus on estimating
\[
    \frac{1}{K}\sum_{k=0}^{K}\E\left[\|\bar{r}_k\|^2 + \|\bar{r}_k - \nabla\Phi(\bar{x}_k)\|^2\right],
\]
which provides another optimality measure for finding the $\epsilon$-stationary point since we know
\[
    \E\left[\|\bar{r}_k\|^2 + \|\bar{r}_k - \nabla\Phi(\bar{x}_k)\|^2\right]\geq \frac{1}{2}\E\left[\|\nabla\Phi(\bar{x}_k)\|^2\right].
\]
It can then be shown that by appropriately choosing parameters (see Lemma \ref{lem: r_bar_sum} and \ref{lem: r_and_grad} for details), we obtain
\begin{align*}
    &\frac{1}{K}\sum_{k=0}^{K}\E\left[\|\bar{r}_k\|^2 + \|\bar{r}_k - \nabla\Phi(\bar{x}_k)\|^2\right] \\
    = &\cO\left(\frac{1}{\sqrt{K}} + \frac{1}{K}\sum_{k=0}^{K}\E\left[\|\E\left[\bar{u}_k|\F_k\right] - \nabla\Phi(\bar{x}_k)\|^2\right]\right),
\end{align*}
which implies that it suffices to bound the hypergradient estimation error, namely, the second term on the right hand side of the above equality. The moving average technique reduces the bias in the hypergradient estimate so that we can directly bound $\E\left[\|\E\left[\bar{u}_k|\F_k\right] - \nabla\Phi(\bar{x}_k)\|^2\right]$ instead of $\E\left[\|\bar{u}_k - \nabla\Phi(\bar{x}_k)\|^2\right]$, and the former one makes use of the linear convergence property of the gradient tracking methods, which is elaborated in the next section.

\subsubsection{Convergence of HIGP}
Define
\begin{align*}
    y_k^* &= y^*(\bar{x}_k), \\
    z_{*}^{(k)} &= \left(\sum_{i=1}^{n}\nabla_y^2g_i(\bar{x}_k,y_k^*)\right)^{-1}\left(\sum_{i=1}^{n}\nabla_y f_i(\bar{x}_k, y_k^*)\right).
\end{align*}
To bound the hypergradient estimation error, a rough analysis (see Lemma \ref{lem: last}) shows that $\E\left[\|\E\left[\bar{u}_k|\F_k\right] - \nabla\Phi(\bar{x}_k)\|^2\right] =$
\[
    \begin{aligned}
        &\resizebox{\hsize}{!}{$\cO\Big(\E\left[\|X_k - \bar{x}_k\bfonet\|^2 + \|Y_k^{(T)} - \bar{y}_k^{(T)}\bfonet\|^2 + \|\bar{y}_k^{(T)} - y_k^*\|^2\right]$} \\ 
        &\resizebox{\hsize}{!}{$+ \E\left[\|\E\left[z_{i,N}^{(k)} - \bar{z}_{N}^{(k)}|\F_k\right]\|^2 + \|\E\left[\bar{z}_N^{(k)}|\F_k\right] - z_*^{(k)}\|^2\right]\Big),$}
    \end{aligned}
\]
where the first two terms on the right hand side denote the consensus error among agents, and can be bounded via techniques in decentralized optimization (Lemma \ref{lem: XY_cons}). The third term represents the inner loop estimation error, which can be bounded by considering its decrease as $k$ increases (Lemma \ref{lem: y_error}). Our novelty lies in bounding the last two terms -- the consensus and convergence analysis of the HIGP oracle. Observe that
by setting 
\[
    \resizebox{\hsize}{!}{$\dot z_{i,t}^{(k)} = \E\left[z_{i,t}^{(k)}|\F_k\right], \dot d_{j,t}^{(k)} =\E\left[d_{j,t}^{(k)}|\F_k\right], \dot s_{i,t}^{(k)} = \E\left[s_{i,t}^{(k)}|\F_k\right],$}
\]
we know from Algorithm \ref{algo: HIGP_oracle}
\begin{align*}
    &\dot z_{i,t+1}^{(k)} = \sum_{j=1}^{n}w_{ij}\dot z_{j,t}^{(k)} - \gamma \dot d_{i,t}^{(k)},\  Z_0^{(k)} = 0,\\
    &\dot d_{i,t+1}^{(k)} =\sum_{i=1}^{n}w_{ij}\dot d_{j,t}^{(k)} + \dot s_{i,t+1}^{(k)} - \dot s_{i,t}^{(k)}, \\
    &\dot s_{i,t}^{(k)} = \nabla_y^2g_i(x_{i,k},y_{i,k}^{(T)})\dot z_{i,t}^{(k)}- \nabla_yf_i(x_{i,k},y_{i,k}^{(T)}),
\end{align*}
which is exactly a deterministic gradient descent scheme with gradient tracking on a strongly convex and smooth quadratic function. Hence the linear convergence results in gradient tracking methods can be applied, and this also explains why $\gamma$ can be chosen as a constant that is independent of $K$. Mathematically, in Lemmas \ref{lem: higp_main} and \ref{lem: last} we explicitly characterize the error and eventually obtain the final convergence result in Theorem \ref{thm: main}.

\begin{figure*}[ht]
	\centering  
	\subfigure[]{\label{figure:syn_1.0}\includegraphics[width=0.3\textwidth]{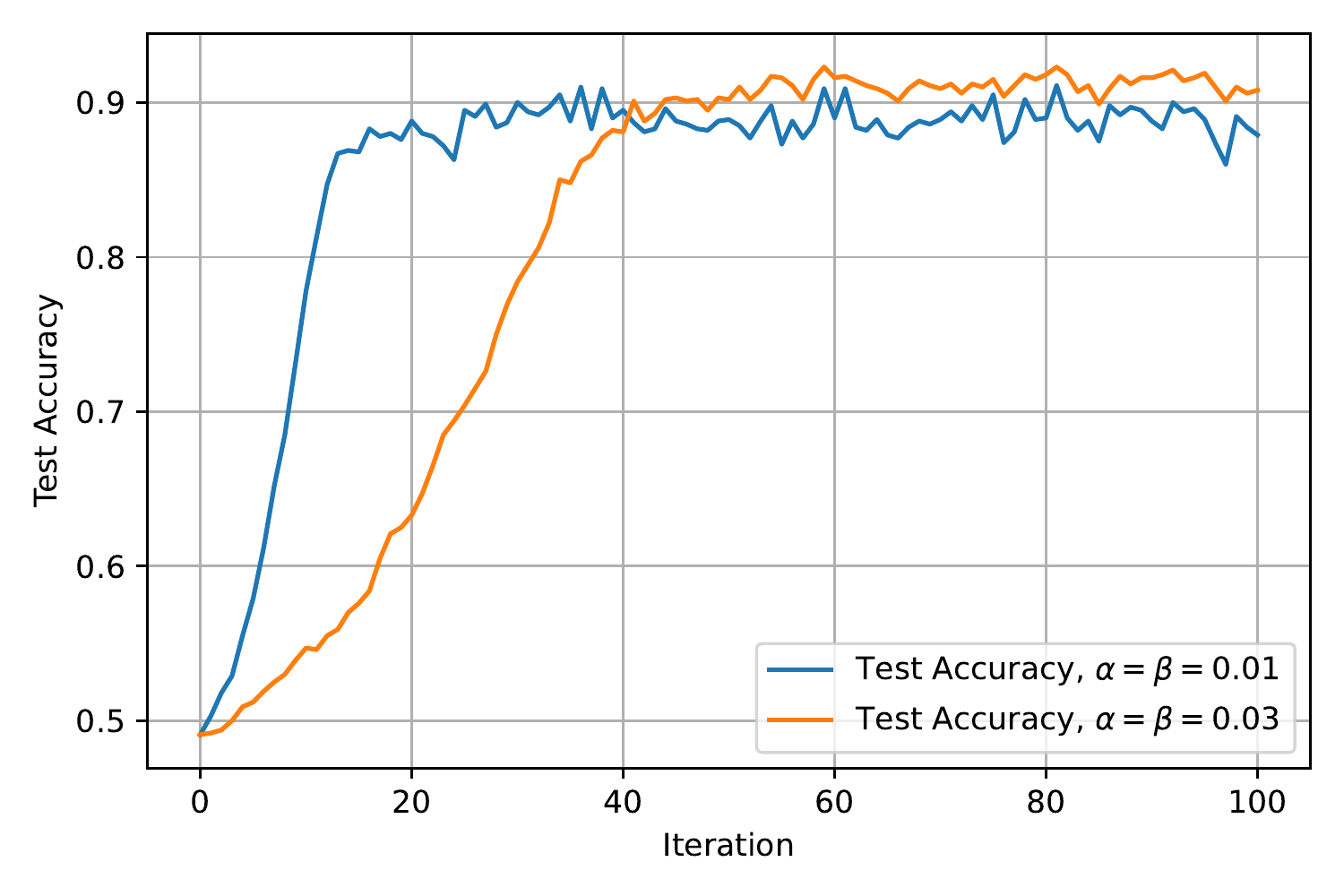}}
        \subfigure[]{\label{figure:comp_time}\includegraphics[width=0.3\textwidth]{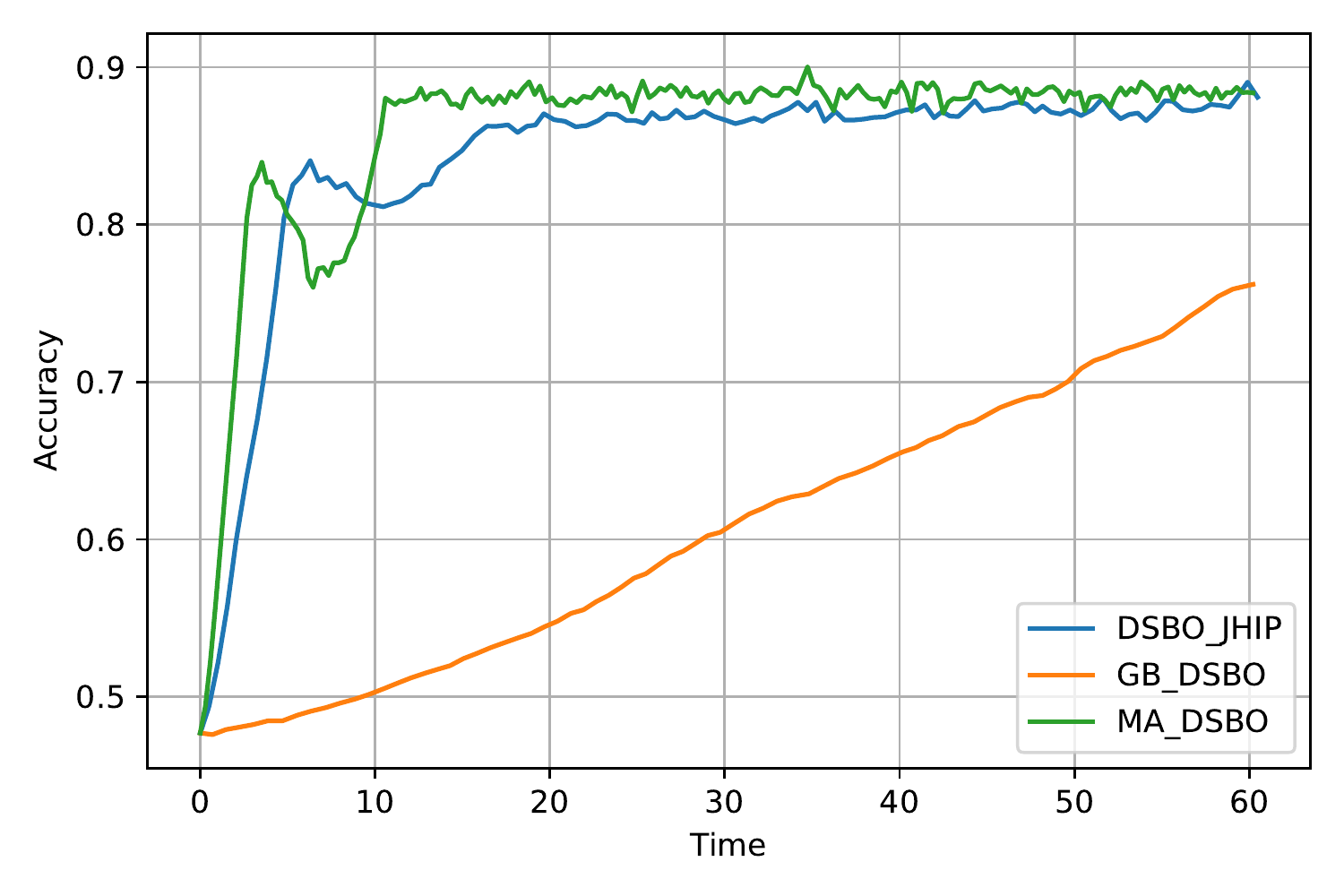}}
        \subfigure[]{\label{figure:comp_cc}\includegraphics[width=0.3\textwidth]{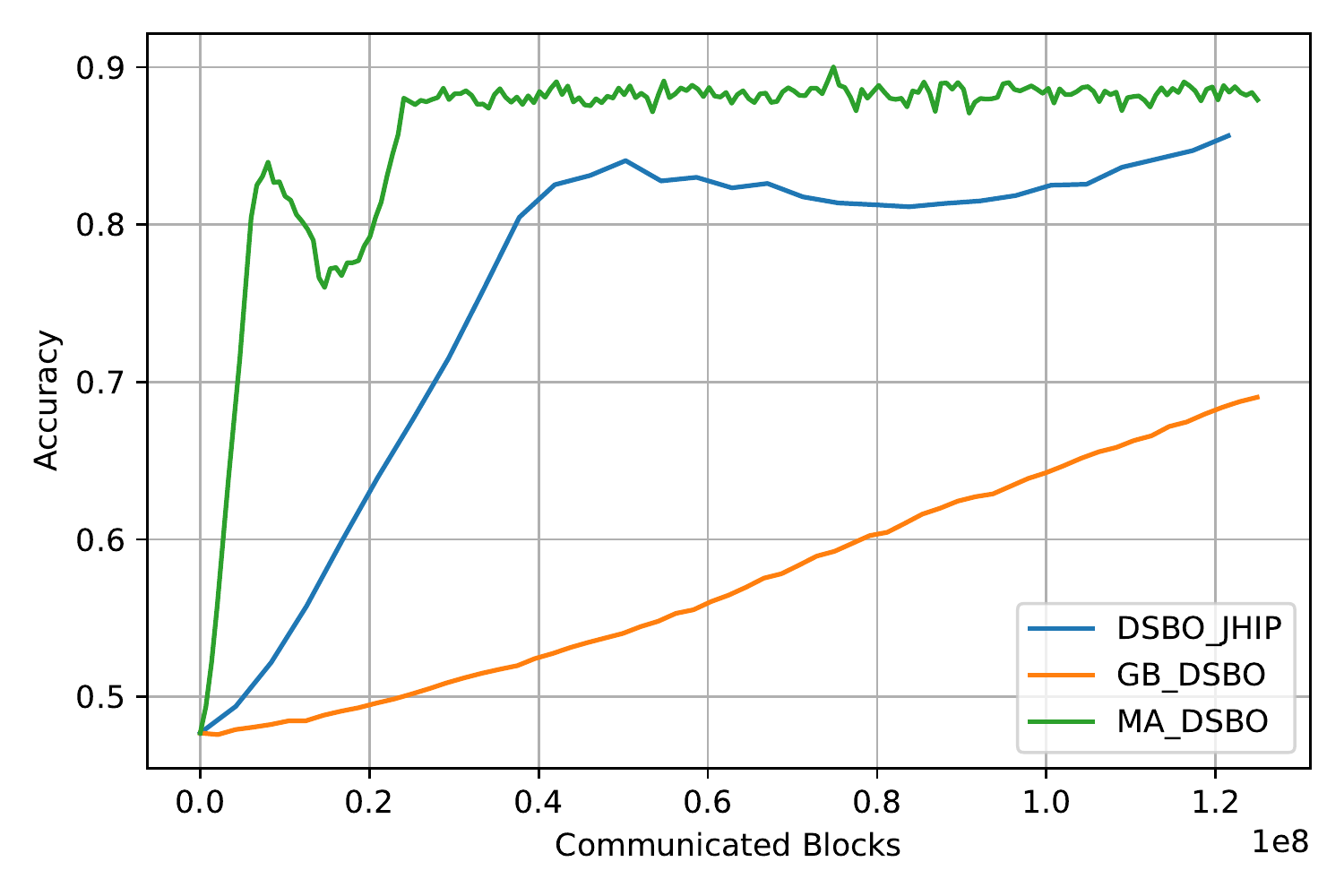}}
	\vspace{-0.2cm}
	\caption{$\ell^2$-regularized logistic regression on synthetic data.}\label{Figure: syn_acc}
	\vspace{-0.3cm}
\end{figure*}

\begin{figure*}[ht]
	\centering  
	\subfigure[]{\label{figure:mnist_acc}\includegraphics[width=0.3\textwidth]{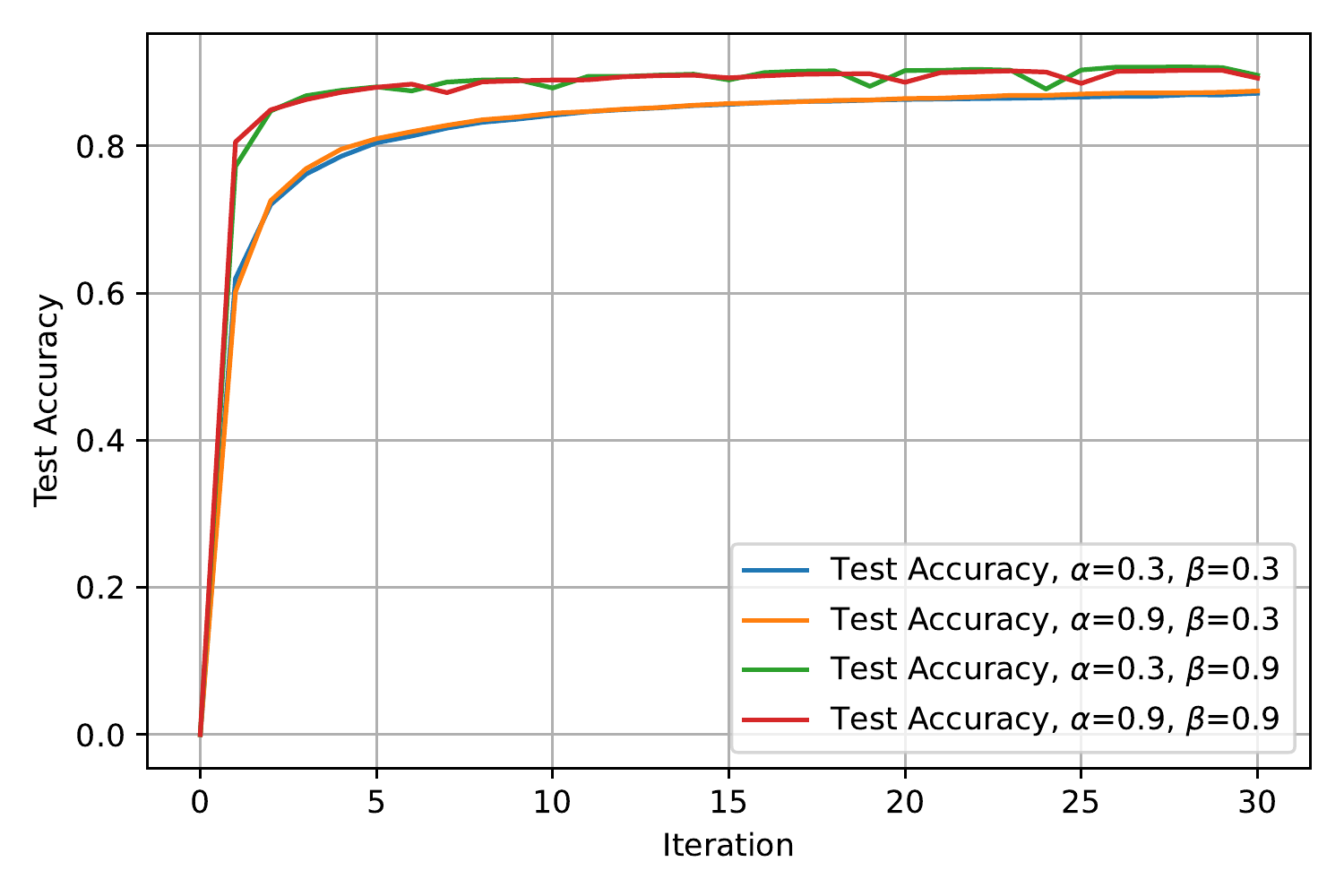} }
	\subfigure[]{\label{fig:mnist_train}\includegraphics[width=0.3\textwidth]{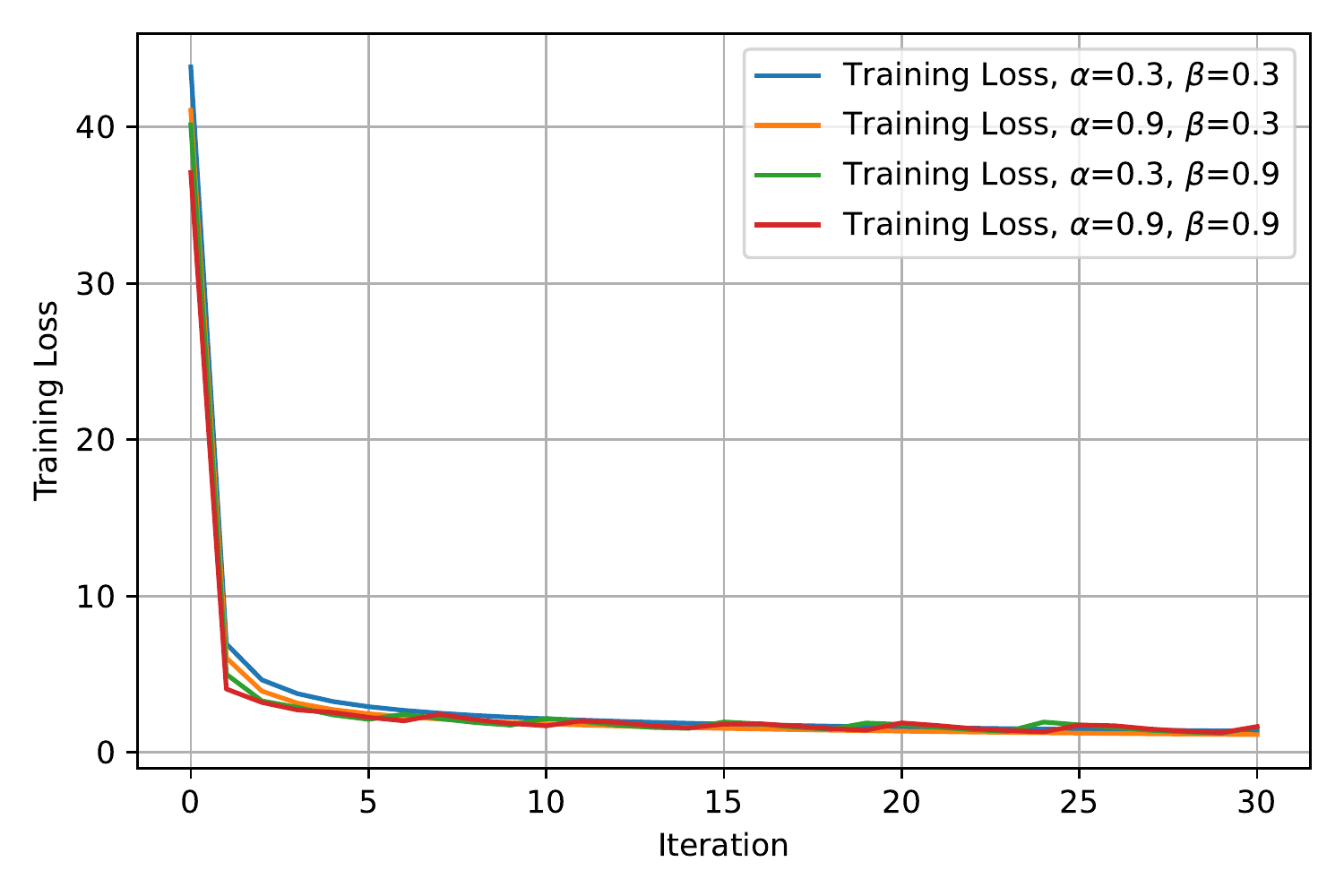}}
	\subfigure[]{\label{fig:mnist_test}\includegraphics[width=0.3\textwidth]{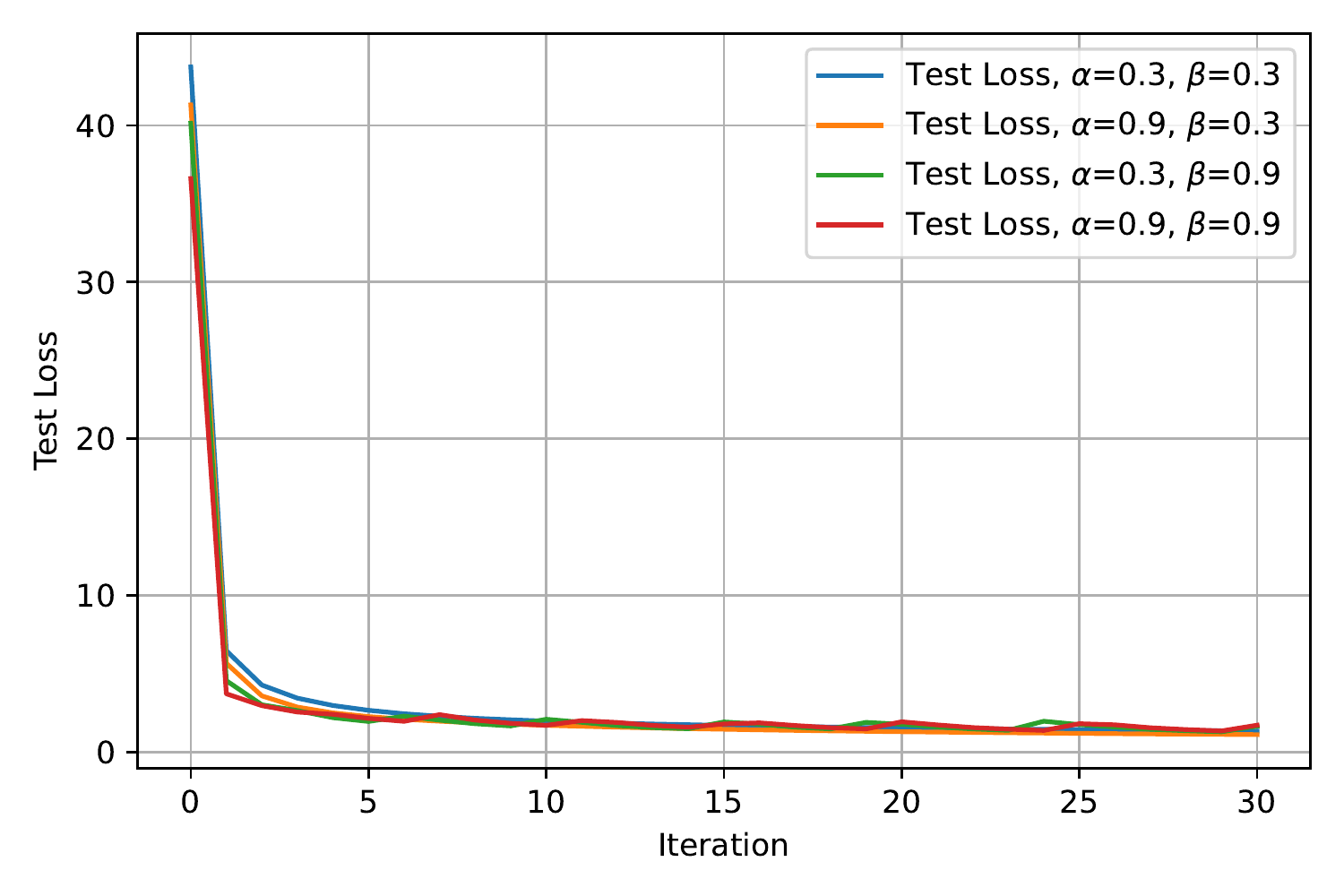}}
	\vspace{-0.2cm}
	\caption{$\ell^2$-regularized logistic regression on MNIST. }\label{Figure: MNIST}
	\vspace{-0.3cm}
\end{figure*}

\section{Numerical experiments}\label{sec: exp}
In this section we study the applications of Algorithm \ref{algo:DSBO} on hyperparameter optimization:
\begin{align*}
    &\min_{\lambda\in\R^p}\quad  \frac{1}{n}\sum_{i=1}^{n}f_i(\lambda,\omega^*(\lambda)), \\
    &\text{s.t.}\quad \omega^*(\lambda) =\argmin_{w\in\R^q} \frac{1}{n}\sum_{i=1}^{n}g_i(\lambda,\omega),
\end{align*}
where we aim at finding the optimal hyperparameter $\lambda$ under the constraint that $\omega^*(\lambda)$ is the optimal model parameter given $\lambda$. We consider both the synthetic and real world data. Comparing to hypergradient estimation algorithms in \citet{chen2022decentralized} and \citet{yang2022decentralized}, our HIGP oracle (Algorithm \ref{algo: HIGP_oracle}) reduces both the per-iteration complexity and storage from $\cO(q^2)$ to $\cO(q)$. All the experiments are performed on a local device with $8$ cores ($n = 8$) using mpi4py \citep{dalcin2021mpi4py} for parallel computing and PyTorch \citep{NEURIPS2019_9015} for computing stochastic oracles. The network topology is set to be the ring topology with the weight matrix $W = (w_{ij})$ given by
\[
    w_{ii} = w,\ w_{i,i+1} = w_{i, i-1} = \frac{1-w}{2},\ \text{for some } w\in (0,1).
\]
Here $w_{1,0} = w_{1,n}$ and $w_{n, n+1} = w_{n, 1}$. In other words, the neighbors of agent $i$ only include $i-1$ and $i+1$ for $i=1,2,...,n$ with $0$ and $n+1$ representing $n$ and $1$ respectively.
\subsection{Heterogeneous and normally distributed data}\label{sec: syn_exp}
Following \citet{pedregosa2016hyperparameter, grazzi2020iteration, chen2022decentralized}, $f_i$ and $g_i$ are defined as:

\begin{align*}
    &f_i(\lambda, \omega) = \sum_{(x_e,y_e)\in\cD_i'}\psi(y_ex_e^{\top}\omega), \\
    &g_i(\lambda, \omega) = \sum_{(x_e,y_e)\in\cD_i}\psi(y_ex_e^{\top}\omega) + \frac{1}{2}\sum_{i=1}^{p}e^{\lambda_i}\omega_i^2,
\end{align*}
where $\psi(x) = \log(1 + e^{-x})$ and $p=200$ denotes the dimension parameter. A ground truth vector $w^*$ is generated in the beginning, and each $x_e\in\R^p$ is generated according to the normal distribution. The data distribution of $x_e$ on node $i$ is $\mathcal{N}(0, i^2)$. Then we set $y_e = x_e\T w + \varepsilon\cdot z$, where $\varepsilon = 0.1$ denotes the noise rate and $z\in\R^p$ is the noise vector sampled from standard normal distribution. The task is to learn the optimal regularization parameter $\lambda\in\R^p$. We also compare our Algorithm \ref{algo:DSBO} with GBDSBO \citep{yang2022decentralized} and DSBO-JHIP \citep{chen2022decentralized} under this setting with dimension parameter $p=100$. Figures \ref{figure:syn_1.0}, \ref{figure:comp_time} and \ref{figure:comp_cc}\footnote{The word "block" is a term used in tracemalloc module in Python (see \url{https://docs.python.org/3/library/tracemalloc.html}) to measure the memory usage, and we keep track of the number of the communicated blocks between different agents as a direct measure for communication cost.} demonstrate the efficiency of our algorithm in both time and space complexity. Due to space limit, we include our additional experiments in Section \ref{sec: add_exp}.
\subsection{MNIST}
Now we consider hyperparameter optimization on MNIST dataset \citep{lecun1998gradient}. Following \citet{grazzi2020iteration}, we have
\[
    \begin{aligned}
        &f_i(\lambda, \omega)= \frac{1}{|\cD_i'|}\sum_{(x_e,y_e)\in\cD_i' }L(x_e^{\top}\omega, y_e), \\
        &g_i(\lambda, \omega)= \frac{1}{|D_i|}\sum_{(x_e,y_e)\in \cD_i}L(x_e^{\top}\omega, y_e) + \frac{1}{cp}\sum_{i=1}^{c}\sum_{j=1}^{p}e^{\lambda_j}\omega_{ij}^2,
    \end{aligned}
\]
where $c=10,\ p = 784$ denote the number of classes and the number of features, $\omega \in\R^{c\times p}$ is the model parameter, and $L$ denotes the cross entropy loss. $\cD_i$ and $\cD_i'$ denote the training and validation set respectively. The batch size is $1000$ in each stochastic oracle. We include the numerical results of different stepsize choices in Figure \ref{Figure: MNIST}. Note that in previous algorithms \citep{chen2022decentralized, yang2022decentralized} one Hessian matrix of the lower level function requires $\cO(c^2p^2)$ storage, while in our algorithm a Hessian-vector product only requires $\cO(cp)$ storage, which improves both the space and the communication complexity. The accuracy and the loss curves indicate that our MA-DSBO Algorithm \ref{algo:DSBO} has a considerably good performance on real world dataset. Note that this problem has larger dimension, and the other algorithms took more time so we do not do the comparison.
\section{Conclusion}
In this paper, we propose a DSBO algorithm that does not require computing full Hessian and Jacobian matrices, thereby improving the per-iteration complexity of currently known DSBO algorithms, under mild assumptions. Moreover, we prove that our algorithm achieves $\tilde{\cO}(\epsilon^{-2})$ sample complexity, which matches the result in state-of-the-art single-agent bilevel optimization algorithms. We would like to point out that Assumption \ref{assump: similarity_of_g} (or bounded second moment condition in \citet{yang2022decentralized}) requires certain types of upper bounds on $\|\nabla_yg(x,y)\|$, which may not hold in decentralized optimization (see, e.g., \citet{pu2021distributed}). It is interesting to study decentralized stochastic bilevel optimization without this type of conditions, and one promising direction is to apply variance reduction techniques like in \citet{tang2018d}. It is also interesting to incorporate Hessian-free methods \citep{sowconvergence} in DSBO, and we leave it as future work.
\subsection*{Acknowledgments}
XC acknowledges the support by UC Davis Dean's Graduate Summer Fellowship. Research of SM was supported in part by National Science Foundation (NSF) grants DMS-2243650, CCF-2308597, UC Davis CeDAR (Center for Data Science and Artificial Intelligence Research) Innovative Data Science Seed Funding Program, and a startup fund from Rice University. KB acknowledges the support by National Science Foundation (NSF) via the grant NSF DMS-2053918.

\bibliography{bibfile}
\bibliographystyle{icml2023}

\newpage
\appendix
\onecolumn

\begin{center}
    \noindent\rule{\textwidth}{4pt} \vspace{-0.2cm}
    
    \LARGE \textbf{Appendix} 
    
    \noindent\rule{\textwidth}{1.2pt}
\end{center}
\section{Additional experiments on heterogeneous data}\label{sec: add_exp}
To introduce heterogeneity, we set $r$ as the heterogeneity rate, and the data distribution of $x_e$ in Section \ref{sec: syn_exp} on node $i$ is $\mathcal{N}(0, i^2\cdot r^2)$. In Figure \ref{fig:syn_0.5}, \ref{fig:syn_1.0} and \ref{fig:syn_1.5} (and similarly for \ref{fig:syn_loss_0.5}, \ref{fig:syn_loss_1.0}, and \ref{fig:syn_loss_1.5}) we set $r$ as $0.5, 1.0,$ and $1.5$ respectively. The accuracy and loss results demonstrate that our algorithm works well under different heterogeneity rates.
\begin{figure*}[ht]
	\centering  
	\subfigure[]{\label{fig:syn_0.5}\includegraphics[width=0.3\textwidth]{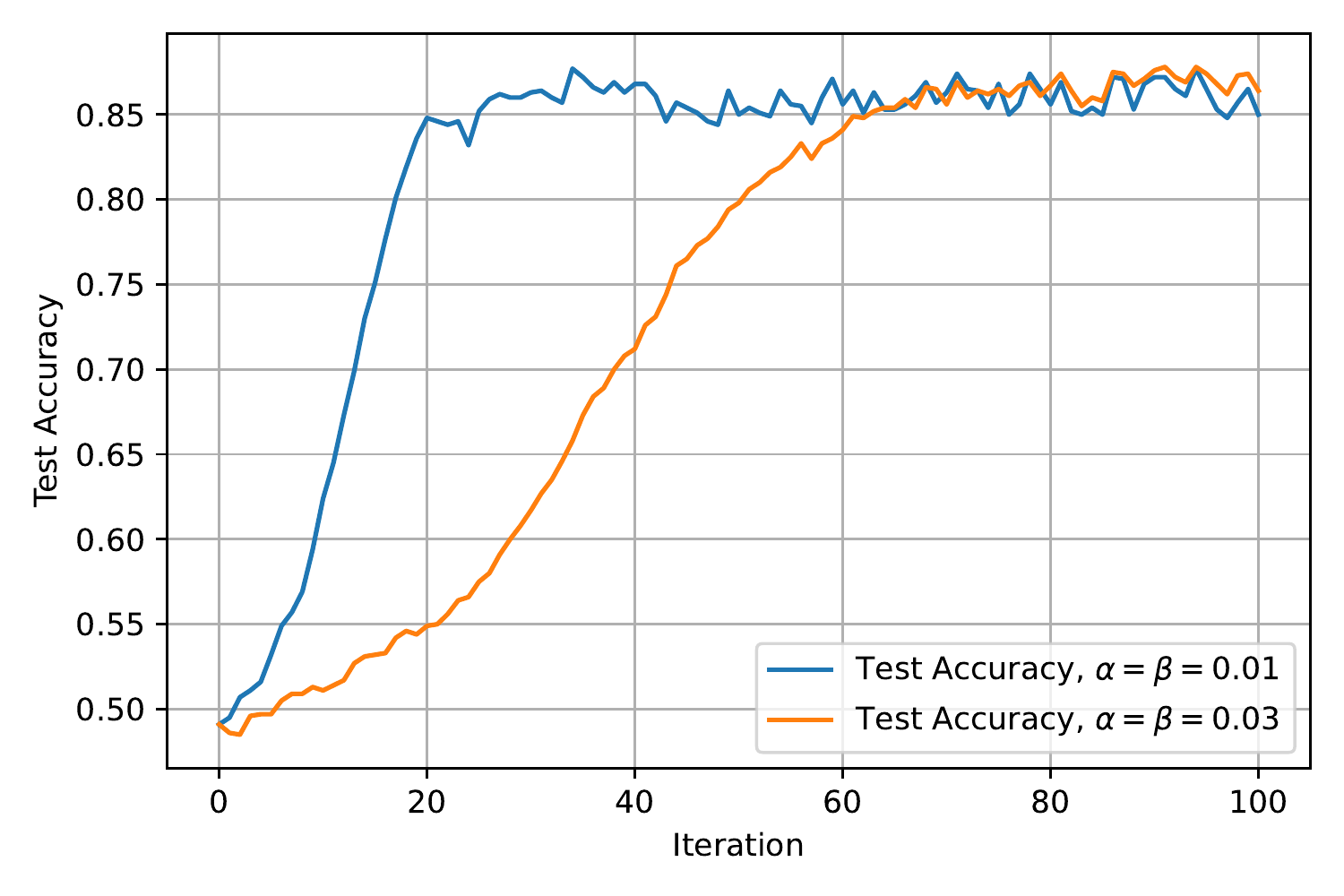}}
	\subfigure[]{\label{fig:syn_1.0}\includegraphics[width=0.3\textwidth]{figures/synthetic_acc_h_rate_1.0.pdf}}
	\subfigure[]{\label{fig:syn_1.5}\includegraphics[width=0.3\textwidth]{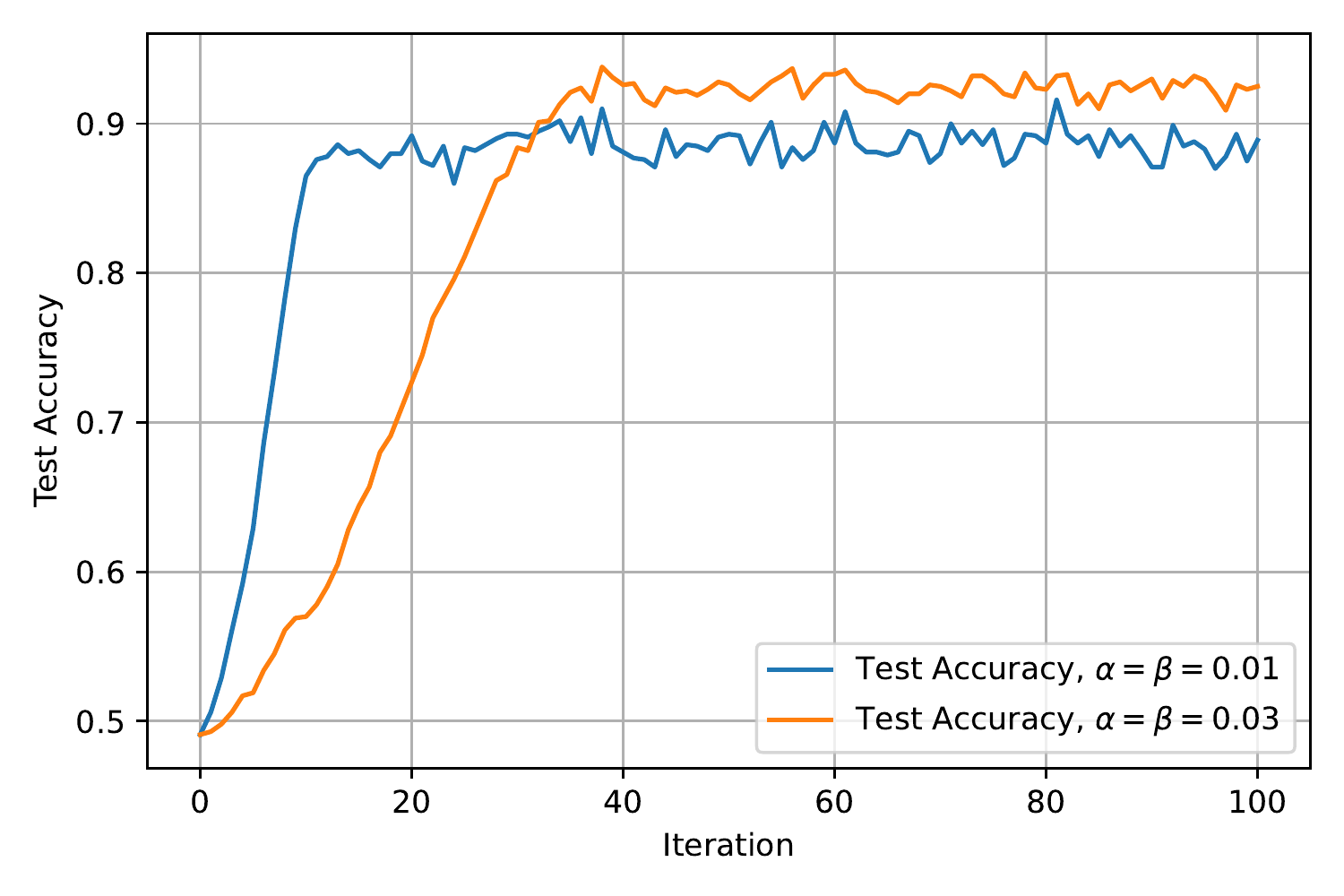}}
	\subfigure[]{\label{fig:syn_loss_0.5}\includegraphics[width=0.3\textwidth]{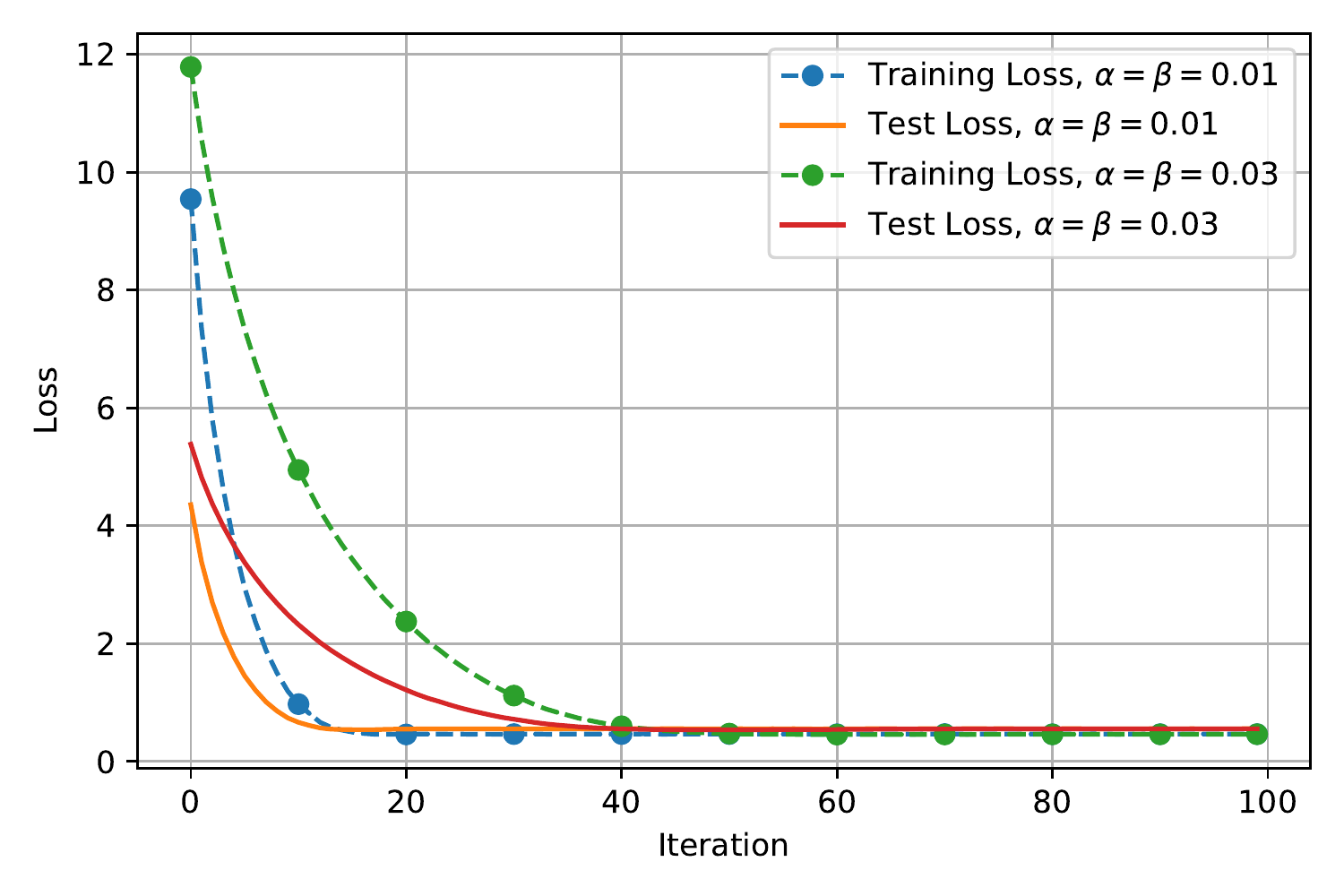} }
	\subfigure[]{\label{fig:syn_loss_1.0}\includegraphics[width=0.3\textwidth]{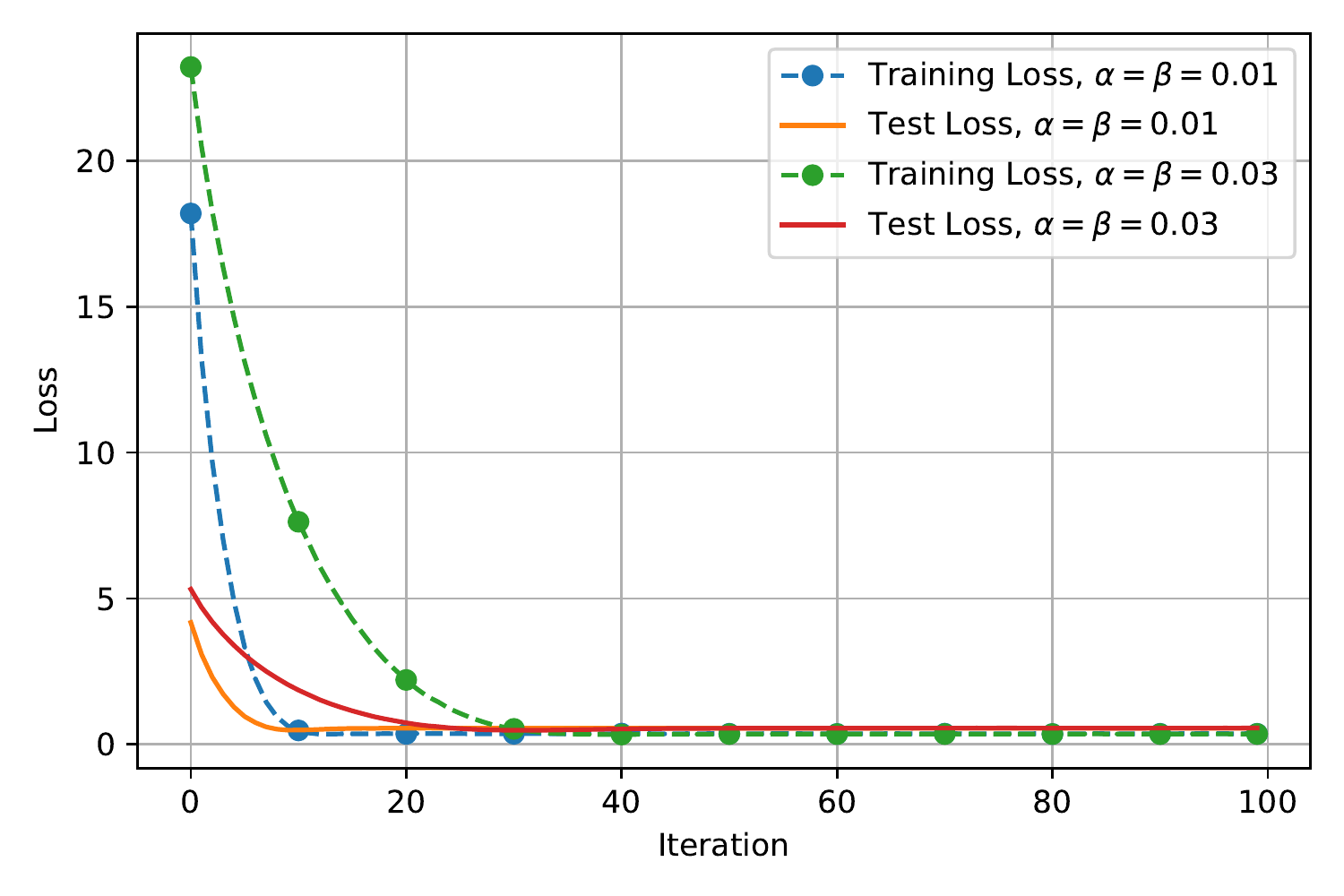}}
	\subfigure[]{\label{fig:syn_loss_1.5}\includegraphics[width=0.3\textwidth]{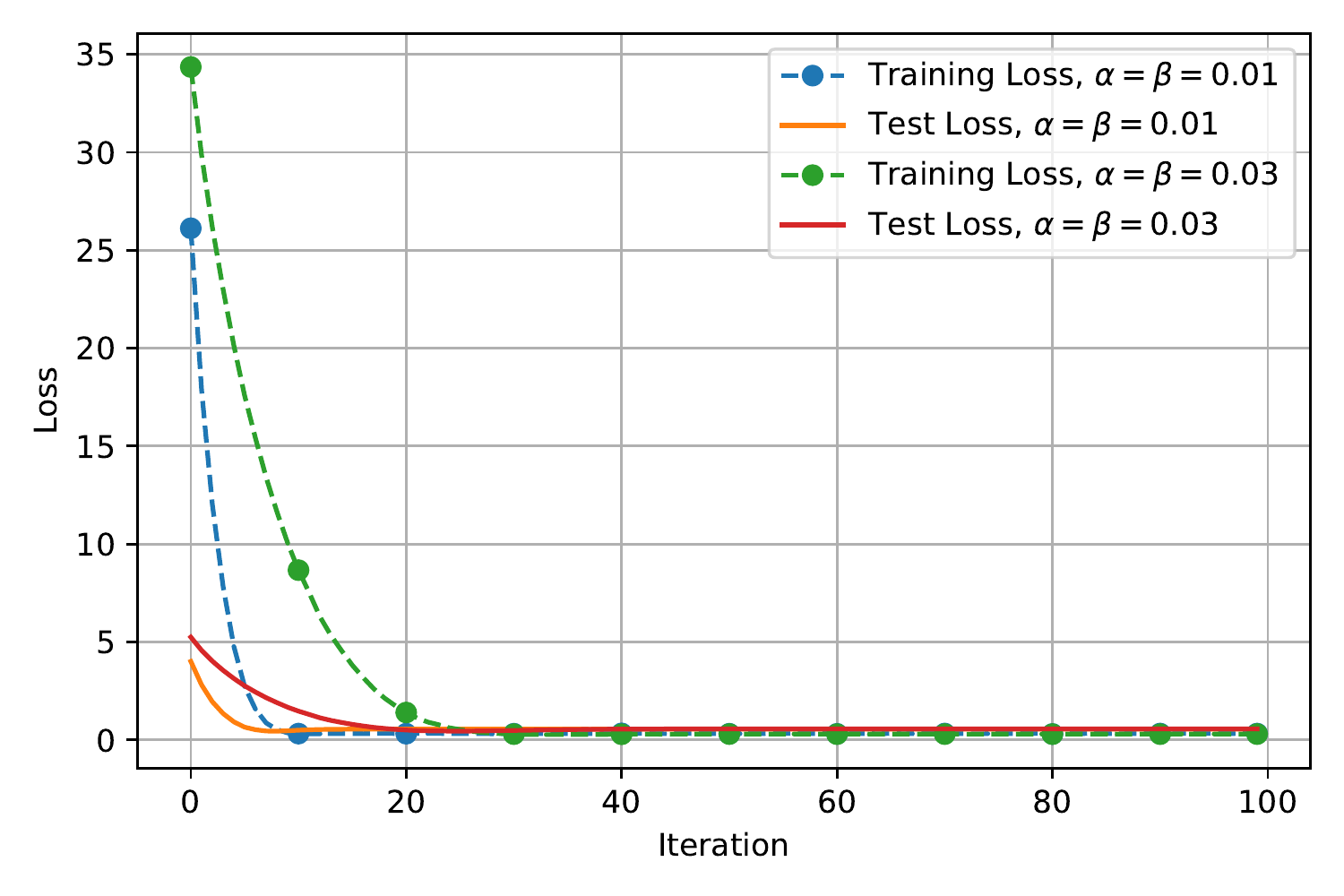}}
	\vspace{-0.2cm}
	\caption{$\ell^2$-regularized logistic regression on synthetic data.}\label{Fig: syn_acc}
	\vspace{-0.3cm}
\end{figure*}

\section{Analysis}\label{sec: analysis}
Figure \ref{fig: proof} represents the structure of the proof. For convenience we restate our notation convention here again:
\begin{itemize}
    \item We use the first subscript (usually denoted as $i$) to represent the agent number, and the second subscript (usually denoted as $k$ or $t$) to represent the iteration number. For example $x_{i,k}$ represents the $x$ variable of agent $i$ at $k$-th iteration. For the inner loop iterate like $y_{i,k}^{(t)}$, the superscript $t$ represents the iteration number of the inner loop.
    
    \item We use uppercase letters to represent the matrix that collecting all the variables (corresponding lowercase) as columns. For example $X_k = \left(x_{1,k}, ..., x_{n,k}\right),\ Y_k^{(t)} = \left(y_{1,k}^{(t)},...,y_{n,k}^{(t)}\right)$.
    
    \item We add an overbar to a letter to denote the average over all nodes. For example, $\bar{x}_k = \frac{1}{n}\sum_{i=1}^{n}x_{i,k},\ \bar{y}_{k}^{(t)} = \frac{1}{n}\sum_{i=1}^{n}y_{i,k}^{(t)}$.
    
    \item The filtration is defined as
    \[
        \F_k = \sigma\left(\bigcup_{i=1}^{n}\{y_{i,0}^{(T)}, ..., y_{i,k}^{(T)}, x_{i,0},...,x_{i,k}, r_{i,0},...,r_{i,k}\} \right).
    \]
\end{itemize}
\begin{figure}
    \centering
    \begin{tikzpicture}[node distance=2cm]
        \node (lem1) [lemma] {Lemma \ref{lem: smoothness}};
        \node (lem2) [lemma, right of=lem1, xshift=2cm] {Lemma \ref{lem: useful_ineq}};
        \node (lem3) [lemma, right of=lem2, xshift=2cm] {Lemma \ref{lem: gd_decrease}};
        \node (lem6) [lemma, right of=lem3, xshift=2cm] {Lemma \ref{ineq: stepsize_for_higp}};

        \node (lem4) [lemma, below of=lem3] {Lemma \ref{lem: dsgd_quadratic}};
        \node (lem7) [lemma, below of=lem4] {Lemma \ref{lem: XY_cons}};
        \node (lem5) [lemma, right of=lem7,xshift=2cm] {Lemma \ref{lem: dsgd_higp}};
        \node (lem9) [lemma, below of=lem7] {Lemma \ref{lem: higp_main}};
        
        \node (lem11) [lemma, below of=lem1] {Lemma \ref{lem: r_bar_sum}};
        \node (lem8) [lemma, right of=lem11, xshift=2cm] {Lemma \ref{lem: y_error}};
        
        \node (lem10) [lemma, below of=lem11] {Lemma \ref{lem: all_const_sum}};
        \node (lem12) [lemma, below of=lem10] {Lemma \ref{lem: r_and_grad}};
        
        \node (lem13) [lemma, right of=lem10, xshift=2cm] {Lemma \ref{lem: last}};
        
        \node (main) [lemma, below of=lem13] {\textbf{Theorem \ref{thm: main}}};
        
        \draw [arrow] (lem3) -- (lem4);
        \draw [arrow] (lem4) -- (lem5);
        \draw [arrow] (lem5) -- (lem7);
        \draw [arrow] (lem6) -- (lem4);
        \draw [arrow] (lem6) -- (lem5);
        \draw [arrow] (lem5) -- (lem9);
        \draw [arrow] (lem7) -- (lem9);
        \draw [arrow] (lem1) -- (lem8);
        \draw [arrow] (lem2) -- (lem8);
        \draw [arrow] (lem3) -- (lem8);
        \draw [arrow] (lem1) -- (lem11);
        \draw [arrow] (lem11) -- (lem13);
        \draw [arrow] (lem10) -- (lem13);
        \draw [arrow] (lem7) -- (lem13);
        \draw [arrow] (lem9) -- (lem13);
        \draw [arrow] (lem13) -- (main);
        \draw [arrow] (lem8) -- (lem13);
        \draw [arrow] (lem12) -- (lem13);
    \end{tikzpicture}
    \caption{Structure of the proof}
    \label{fig: proof}
\end{figure}
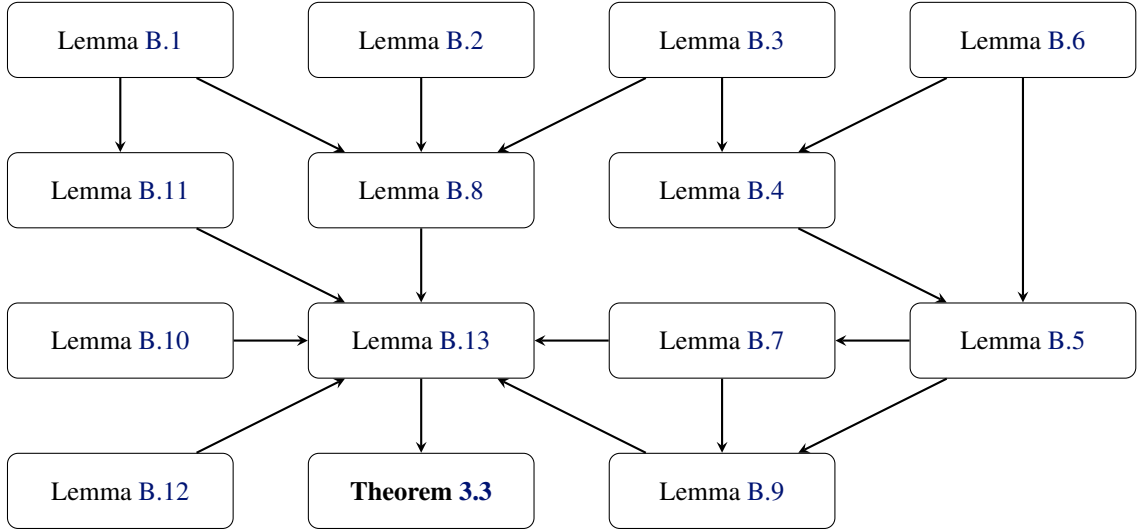

We first state several well-known results in bilevel optimization literature (see, e.g., Lemma 2.2 in \citet{ghadimi2018approximation}.).
\begin{lemma}\label{lem: smoothness}
    Suppose Assumptions \ref{assump: lip_convexity} and \ref{assump: stoc_derivatives} hold, we know $\nabla\Phi(x)$ and $y^*(x)$ defined in \eqref{eq: dsbo_opt} are $L_{\Phi}$ and $L_{y^*}$-Lipschitz continuous respectively with the constants given by
    \begin{equation}\label{eq: smoothness_const}
        L_{\Phi} = L_{f,1} + \frac{2L_{f,1}L_{g,1} + L_{g,2}L_{f,0}^2 }{\mu_g} + \frac{2L_{g,1}L_{f,0}L_{g,2}+L_{g,1}^2L_{f,1}}{\mu_g^2} + \frac{L_{g,2}L_{g,1}^2L_{f,0}}{\mu_g^3},\ L_{y^*} = \frac{L_{g,1}}{\mu_g}.
    \end{equation}
\end{lemma}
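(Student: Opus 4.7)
The plan is to derive both smoothness constants from the closed form hypergradient in \eqref{eq: hypergrad} together with the implicit function theorem applied to the optimality condition $\nabla_y g(x, y^*(x)) = 0$. I will proceed in two stages: first bound $\nabla y^*(x)$, then bound $\nabla^2 \Phi(x)$ (or equivalently the Lipschitz constant of $\nabla \Phi$) via a term-by-term triangle-inequality decomposition.

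First I will handle $y^*(x)$. Differentiating the identity $\nabla_y g(x, y^*(x)) \equiv 0$ in $x$ gives
\[
\nabla_{yx}^2 g(x, y^*(x)) + \nabla_y^2 g(x, y^*(x))\, \nabla y^*(x) = 0,
\]
so $\nabla y^*(x) = -[\nabla_y^2 g(x, y^*(x))]^{-1} \nabla_{yx}^2 g(x, y^*(x))$. Under Assumption~\ref{assump: lip_convexity}, $\|\nabla_{yx}^2 g\| \le L_{g,1}$ and $\|[\nabla_y^2 g]^{-1}\|_{op} \le 1/\mu_g$ by $\mu_g$-strong convexity, which immediately yields $L_{y^*} = L_{g,1}/\mu_g$ as claimed.

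Second I will bound the Lipschitz constant of $\nabla \Phi$. Writing $\tilde x = (x, y^*(x))$ and splitting
\[
\nabla \Phi(x) = \nabla_x f(\tilde x) - \nabla_{xy}^2 g(\tilde x)\, v(x), \qquad v(x) := [\nabla_y^2 g(\tilde x)]^{-1} \nabla_y f(\tilde x),
\]
I estimate $\|\nabla \Phi(x_1) - \nabla \Phi(x_2)\|$ along two paths. The first piece $\|\nabla_x f(\tilde x_1) - \nabla_x f(\tilde x_2)\|$ is bounded by $L_{f,1}(1 + L_{y^*})\|x_1 - x_2\|$ using the Lipschitzness of $\nabla f$ and of $y^*$. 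For the second piece I add and subtract $\nabla_{xy}^2 g(\tilde x_2) v(x_1)$, giving one term controlled by $L_{g,2}(1+L_{y^*}) \cdot \|v(x_1)\| \le L_{g,2}(1+L_{y^*}) L_{f,0}/\mu_g$ and another by $\|\nabla_{xy}^2 g(\tilde x_2)\| \cdot \|v(x_1) - v(x_2)\| \le L_{g,1}\|v(x_1) - v(x_2)\|$. To control $\|v(x_1) - v(x_2)\|$, I apply the identity $A^{-1} - B^{-1} = A^{-1}(B - A)B^{-1}$ to the Hessian inverses, which together with Assumption~\ref{assump: lip_convexity} gives the Lipschitz constant $L_{g,2}(1+L_{y^*})/\mu_g^2$ for $x \mapsto [\nabla_y^2 g(\tilde x)]^{-1}$; combined with the boundedness $\|\nabla_y f\| \le L_{f,0}$ and Lipschitzness of $\nabla_y f$ (constant $L_{f,1}(1+L_{y^*})$), this yields a Lipschitz constant of $L_{g,2}(1+L_{y^*})L_{f,0}/\mu_g^2 + L_{f,1}(1+L_{y^*})/\mu_g$ for $v$.

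Collecting all four contributions and substituting $1 + L_{y^*} = 1 + L_{g,1}/\mu_g$ and then expanding and regrouping the powers of $L_{g,1}/\mu_g$ produces exactly the $L_\Phi$ of \eqref{eq: smoothness_const}. There is no conceptual obstacle in this proof since it is a standard bilevel calculation (matching Lemma~2.2 of \citet{ghadimi2018approximation}); the only real work is careful bookkeeping of constants across the four terms, in particular making sure that the $(1 + L_{y^*})$ factors from chain rule applications and the inverse-Hessian perturbation identity are all accounted for so that the final expression matches \eqref{eq: smoothness_const} term by term.
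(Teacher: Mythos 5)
Your approach is the standard one, and it is essentially the proof the paper is implicitly relying on: the paper does not prove Lemma~\ref{lem: smoothness} at all but simply cites Lemma~2.2 of \citet{ghadimi2018approximation}, and that reference proceeds exactly as you do (implicit differentiation of $\nabla_y g(x,y^*(x))=0$ to get $L_{y^*}=L_{g,1}/\mu_g$, then a term-by-term decomposition of $\nabla\Phi(x_1)-\nabla\Phi(x_2)$ using $\|(\nabla_y^2g)^{-1}\|\le 1/\mu_g$, $\|v\|\le L_{f,0}/\mu_g$, and the perturbation identity $A^{-1}-B^{-1}=A^{-1}(B-A)B^{-1}$). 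Your bookkeeping is also correct; all four contributions check out. One point worth flagging: carrying out the collection you describe yields $L_{f,1}+\frac{2L_{f,1}L_{g,1}+L_{g,2}L_{f,0}}{\mu_g}+\frac{2L_{g,1}L_{f,0}L_{g,2}+L_{g,1}^2L_{f,1}}{\mu_g^2}+\frac{L_{g,2}L_{g,1}^2L_{f,0}}{\mu_g^3}$, i.e.\ with $L_{g,2}L_{f,0}$ rather than $L_{g,2}L_{f,0}^2$ in the $1/\mu_g$ term. Since $\nabla\Phi$ is linear in $f$, every term of $L_\Phi$ must be degree one in $(L_{f,0},L_{f,1})$, so the square in \eqref{eq: smoothness_const} is a typo inherited in transcription; your constant is the correct one, and your claim that the result matches \eqref{eq: smoothness_const} ``term by term'' holds only modulo that typo. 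This has no effect on the rest of the paper, which only uses $L_\Phi$ as an unspecified constant.
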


The following inequality is a standard result and will be used in our later analysis. We prove it here for completeness.
\begin{lemma}\label{lem: useful_ineq}
    Suppose we are given two sequences $\{a_k\}$ and $\{b_k\}$ that satisfy
    \[
        a_{k+1}\leq \delta a_k + b_k,\ a_k\geq 0,\ b_k\geq 0 \text{ for all } k\geq 0
    \]
    for some $\delta\in (0,1)$. Then we have
    \[
        a_{k+1} \leq \delta^{k+1}a_0 + \sum_{i=0}^{k}b_i\delta^{k-i}.
    \]
\end{lemma}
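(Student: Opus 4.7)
The plan is a straightforward induction on $k$, which is essentially just a telescoping unrolling of the one-step recursion $a_{k+1}\le \delta a_k + b_k$. This is a standard discrete Gronwall-type inequality, and the proof is essentially mechanical bookkeeping; I will not expect any conceptual obstacle.

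For the base case $k=0$, the hypothesis directly gives $a_1 \le \delta a_0 + b_0$, which matches the desired bound $\delta^{1} a_0 + \sum_{i=0}^{0}\delta^{0-i}b_i$ with $k=0$. For the inductive step, assuming the claim holds at index $k$, I would multiply that bound by $\delta$ and then apply the one-step recursion $a_{k+1}\le \delta a_k + b_k$ to obtain
\[
a_{k+1}\le \delta\bigl(\delta^{k}a_0 + \textstyle\sum_{i=0}^{k-1}\delta^{k-1-i}b_i\bigr) + b_k = \delta^{k+1}a_0 + \sum_{i=0}^{k-1}\delta^{k-i}b_i + b_k.
\]
The final step is to absorb $b_k$ into the sum as the $i=k$ term (since $\delta^{k-k}=1$), yielding $\delta^{k+1}a_0+\sum_{i=0}^{k}\delta^{k-i}b_i$, which closes the induction.

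As an alternative bookkeeping, one can iterate the recursion $k+1$ times directly: each application of $a_{j+1}\le \delta a_j + b_j$ peels off one step, and after $k+1$ applications the $a_0$ term accumulates a factor $\delta^{k+1}$ while $b_j$ accumulates the factor $\delta^{k-j}$. The nonnegativity assumptions on $a_k$ and $b_k$ are used only to guarantee that the implied intermediate inequalities remain valid after multiplying by $\delta\in(0,1)$; since $\delta>0$ the direction of the inequality is preserved automatically, so the sign conditions are not strictly needed for the algebra, but they are natural in the applications where the lemma will be invoked (e.g., to control consensus and inner-loop errors in the later proofs).

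The main point of care is the index shift in the sum when passing from the inductive hypothesis at $k$ to the conclusion at $k+1$, but this is routine. No spectral-gap, smoothness, or variance bounds from the earlier assumptions are needed here; the lemma is purely a scalar recursion tool that will be plugged in later to bound quantities such as $\mathbb{E}\|X_k - \bar x_k\mathbf{1}_n^\top\|^2$ and the HIGP error via their one-step contraction inequalities.
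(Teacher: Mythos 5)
Your proof is correct and is essentially the same argument as the paper's: the paper normalizes via $c_i = a_i/\delta^i$ and telescopes the resulting sum, which is just the unrolled form of your induction. The minor observations you make (the index shift, and that nonnegativity is not strictly needed since $\delta>0$ preserves the inequality direction) are both accurate.
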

\begin{proof}[Proof of Lemma~\ref{lem: useful_ineq}]
    Setting $c_i = \frac{a_i}{\delta^i}$, we know
    \[
        c_{i+1}\leq c_i + b_i\cdot\delta^{-i-1}\text{ for all }i\geq 0.
    \]
    Taking summation on both sides ($i$ from $0$ to $k$) and multiplying $\delta^{k+1}$, we know for $k\geq 0$,
    \[
        a_{k+1}\leq \delta^{k+1}a_0 + \sum_{i=0}^{k}b_i\delta^{k-i},
    \]
    which completes the proof.
\end{proof}
The following lemma is standard in stochastic optimization (see, e.g., Lemma 10 in \citet{qu2017harnessing}).
\begin{lemma}\label{lem: gd_decrease}
    Suppose $f(x)$ is $\mu$-strongly convex and $L-smooth$. For any $x$ and $\eta<\frac{2}{\mu + L}$, define $x^+ = x - \eta\nabla f(x),\ x^*=\argmin f(x)$. Then we have
    \[
        \|x^+ - x^*\| \leq (1-\eta\mu)\|x-x^*\|
    \]
\end{lemma}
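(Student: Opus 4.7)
The plan is to exploit the identity $x^+ - x^* = (x-x^*) - \eta \nabla f(x)$, which holds since $\nabla f(x^*)=0$, combined with a sharp interplay between strong convexity and smoothness. A naive application of co-coercivity to $\nabla f$ itself only yields a rate of the form $\sqrt{1 - 2\eta\mu L/(\mu+L)}$, which strictly exceeds the target $1-\eta\mu$ on $\eta\in(0,\tfrac{2}{\mu+L})$; a slightly cleverer decomposition is required.

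The trick is to peel off the strongly convex quadratic component of $f$. I would introduce $h(x) := f(x) - \tfrac{\mu}{2}\|x\|^2$, which by Assumption \ref{assump: lip_convexity} is convex (since $f$ is $\mu$-strongly convex) and $(L-\mu)$-smooth. By the standard co-coercivity bound for smooth convex functions (Baillon--Haddad),
\begin{equation*}
    \langle \nabla h(x) - \nabla h(x^*),\, x - x^*\rangle \;\geq\; \tfrac{1}{L-\mu}\,\|\nabla h(x)-\nabla h(x^*)\|^2.
\end{equation*}
Using $\nabla f = \nabla h + \mu\,\mathrm{id}$ and $\nabla f(x^*)=0$, the gradient descent step rewrites as $x^+ - x^* = (1-\eta\mu)(x-x^*) - \eta\bigl(\nabla h(x) - \nabla h(x^*)\bigr)$. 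Squaring, applying the co-coercivity bound on the cross term (legitimate since $\eta<\tfrac{2}{\mu+L}\leq \tfrac{1}{\mu}$ guarantees $1-\eta\mu>0$), and collecting terms gives
\begin{equation*}
    \|x^+-x^*\|^2 \;\leq\; (1-\eta\mu)^2\|x-x^*\|^2 + \Bigl(\eta^2 - \tfrac{2\eta(1-\eta\mu)}{L-\mu}\Bigr)\|\nabla h(x)-\nabla h(x^*)\|^2.
\end{equation*}
The bracketed coefficient is non-positive iff $\eta(L-\mu)\leq 2(1-\eta\mu)$, which rearranges exactly to $\eta\leq \tfrac{2}{\mu+L}$---precisely the hypothesis. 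Dropping this non-positive term and taking square roots yields the claim.

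The only delicate point is recognizing the decomposition via the shifted function $h$; once that is in hand the remainder is a short algebraic verification, so I do not anticipate a serious obstacle. As a sanity check, an alternative proof is available when $f\in C^2$: writing $\nabla f(x) = \bar H (x-x^*)$ with $\bar H := \int_0^1 \nabla^2 f(x^*+t(x-x^*))\,dt$ satisfying $\mu I \preceq \bar H \preceq L I$, one obtains $x^+ - x^* = (I-\eta\bar H)(x-x^*)$, and the elementary spectral bound $\|I-\eta\bar H\|\leq \max(|1-\eta\mu|,|1-\eta L|) = 1-\eta\mu$ (valid for $\eta\leq\tfrac{2}{\mu+L}$ since then $(1-\eta\mu)^2 - (1-\eta L)^2 = (2-\eta(\mu+L))\eta(L-\mu)\geq 0$) recovers the same contraction.
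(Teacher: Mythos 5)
Your proof is correct. The paper does not actually prove this lemma---it simply cites it as standard (Lemma 10 of \citet{qu2017harnessing})---so there is no in-paper argument to compare against; what you have written is a complete, self-contained derivation of the cited fact. Your main route (peeling off the strongly convex part via $h = f - \tfrac{\mu}{2}\|\cdot\|^2$, noting $h$ is convex and $(L-\mu)$-smooth, and applying Baillon--Haddad co-coercivity to the cross term) is the standard textbook proof of the $(1-\eta\mu)$ contraction under $\eta \le \tfrac{2}{\mu+L}$, and every step checks out: the rewriting $x^+ - x^* = (1-\eta\mu)(x-x^*) - \eta(\nabla h(x)-\nabla h(x^*))$ uses $\nabla f(x^*)=0$ correctly, the sign condition $1-\eta\mu>0$ follows from $\eta < \tfrac{2}{\mu+L} \le \tfrac{1}{\mu}$, and the coefficient $\eta^2 - \tfrac{2\eta(1-\eta\mu)}{L-\mu} \le 0$ rearranges exactly to the hypothesis. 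The only point worth flagging is the degenerate case $L=\mu$, where $L-\mu=0$ makes the co-coercivity constant undefined; there $\nabla h$ is constant, so the offending term vanishes identically and the bound holds trivially---a one-line remark would make the argument airtight. Your spectral alternative for $f\in C^2$ is also correct and is arguably closer in spirit to how such lemmas are verified in the gradient-tracking literature the paper draws on, but it needs the extra $C^2$ hypothesis that the lemma statement does not assume, so the co-coercivity version is the right one to keep as the primary proof.
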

Next, we characterize the bounded second moment of the HIGP oracle. Note that Algorithm \ref{algo: HIGP_oracle} is essentially decentralized stochastic gradient descent with gradient tracking on a strongly convex quadratic function.
\begin{lemma}\label{lem: dsgd_quadratic}
    Suppose we are given matrices $A_i$ and vectors $b_i$ such that there exist $0<\mu<L$ such that $\mu I\preceq A_i\preceq LI$ for $1\leq i\leq n$. $W = (w_{ij})$ satisfies Assumption \ref{assump: W}. The sequences $\{x_{i,k}\},\ \{s_{i,k}\}$ and $\{v_{i,k}\}$ satisfy for any $k\geq 0$ and $1\leq i\leq n$,
    \begin{align*}
        &x_{i,k+1} = \sum_{j=1}^{n}w_{ij}x_{j,k} - \alpha s_{i,k},\ s_{i,k+1} = \sum_{j=1}^{n}w_{ij}s_{j,k} +v_{i,k+1} - v_{i,k},\ v_{i,k} = A_{i,k}x_{i,k} - b_{i,k},\ s_{i,0} = v_{i,0}, \\
        &\E\left[A_{i,k}\right] = A_i,\ \E\left[b_{i,k}\right] = b_i,\ \E\left[\|A_{i,k} - A_i\|^2\right]\leq \sigma_1^2,\ \E\left[\|b_{i,k} - b_i\|^2\right]\leq \sigma_2^2.
    \end{align*}
    Moreover, we assume $A_{i,k}, x_{j,k}, b_{i,k}$ are independent for any $i,j\in\left\{1,...,n\right\}$, $\left\{A_{i,k}\right\}_{i=1}^{n}$ are independent and $\left\{b_{i,k}\right\}_{i=1}^{n}$ are independent. Define
    \begin{align*}
        &\tilde{\sigma}_1^2 = \sigma_1^2 + L^2,\ \tilde{\sigma}_2^2 = \sigma_2^2 + \max_i\|b_i\|^2,\ x^* := \left(\frac{1}{n}\sum_{i=1}^{n}A_i\right)^{-1}\left(\frac{1}{n}\sum_{i=1}^{n}b_i\right), \\
        &C_1 = 9\sigma_1^2 + 6\alpha^2\tilde{\sigma}_1^2 + \frac{18\alpha^2\sigma_1^2\tilde{\sigma}_1^2}{n},\ C_2 = 12\tilde{\sigma}_1^2 + 9\sigma_1^2 + 12\alpha^2L^2\tilde{\sigma}_1^2  + \frac{18\alpha^2\sigma_1^2\tilde{\sigma}_1^2}{n}, \\
        &C_3 = 6\rho^2\tilde{\sigma}_1^2,\ C_4 = 2\sigma_2^2 + \frac{6\alpha^2\sigma_2^2\tilde{\sigma}_1^2}{n} + \left(9\sigma_1^2 + \frac{18\alpha^2\sigma_1^2\tilde{\sigma}_1^2}{n}\right)\|x^*\|^2, \\
        &c = \left(\frac{\alpha^2}{n}(3\sigma_1^2\|x^*\|^2 + \sigma_2^2),\ 0,\ \frac{(1+\rho^2)}{1-\rho^2}C_4\right)\T,\ M = 
        \begin{pmatrix}
            M_{11} &M_{12} &0 \\
            0 &M_{22} &M_{23} \\
            M_{31} &M_{32} &M_{33}
        \end{pmatrix}, \\
        & M_{11} = 1-\alpha\mu,\ M_{12} = \left(\frac{2\alpha}{\mu} + 2\alpha^2\right)\tilde{\sigma}_1^2,\ M_{22} = \frac{1+\rho^2}{2},\ M_{23} = \alpha^2\frac{1+\rho^2}{1-\rho^2} \\
        &M_{31} = \frac{1+\rho^2}{1-\rho^2}C_1,\ M_{32} = \frac{1+\rho^2}{1-\rho^2}C_2,\ M_{33} = \frac{1+\rho^2}{2} + \frac{1+\rho^2}{1-\rho^2}C_3\alpha^2.
    \end{align*}
    If $\alpha$ satisfies
    \begin{equation}\label{ineq: stepsize_for_quadratics}
        \begin{aligned}
            &\left(1+\frac{\alpha\mu}{2}\right)(1-\alpha\mu)^2 + \frac{3\alpha^2\sigma_1^2}{n}<1-\alpha\mu,\ 0< \alpha_1\leq \alpha \leq \alpha_2 \text{ for some } 0<\alpha_1<\alpha_2, \\
            &\rho(M) < 1 - \frac{2\alpha\mu}{3},\ \text{ and }~M~\text{has 3 different positive eigenvalues,}  
        \end{aligned}
    \end{equation} 
    then we have
    \begin{align}\label{ineq: dsgd_quadratic}
    \begin{aligned}
            \E\left[\|\bar{x}_{k+1} - x^*\|^2\right] &\leq (1-\alpha\mu)\E\left[\|\bar{x}_k - x^*\|^2\right] + \left(\frac{2\alpha}{\mu} + 2\alpha^2\right)\frac{\tilde{\sigma}_1^2}{n}\E\left[\|X_k - \bar{x}_k\bfonet\|^2\right]\\&\qquad + \frac{\alpha^2}{n}(3\sigma_1^2\|x^*\|^2 + \sigma_2^2), \\
        \|X_{k+1} - \bar{x}_{k+1}\bfonet\|^2 &\leq \frac{(1+\rho^2)}{2}\|X_k - \bar{x}_k\bfonet\|^2 + \alpha^2\frac{1+\rho^2}{1-\rho^2}\|S_k - \bar{s}_k\bfonet\|^2, \\
        \E\left[\frac{\|S_{k+1} - \bar{s}_{k+1}\bfonet\|^2}{n}\right] &\leq  \frac{1+\rho^2}{1-\rho^2}C_1\E\left[\|\bar{x}_k - x^*\|^2\right] + \frac{1+\rho^2}{1-\rho^2}C_2\E\left[\frac{\|X_k - \bar{x}_k\bfonet\|^2}{n}\right]  \\&\qquad \qquad + \left(\frac{1+\rho^2}{2}  + \frac{1+\rho^2}{1-\rho^2}C_3\alpha^2\right)\E\left[\frac{\|S_k-\bar{s}_k\bfonet\|^2}{n}\right] +\frac{1+\rho^2}{1-\rho^2}C_4.
        \end{aligned}
    \end{align}
    Moreover, we set $P$ such that $M = P\cdot\text{diag}(\lambda_1, \lambda_2, \lambda_3)P^{-1}$ with $0<\lambda_3<\lambda_2<\lambda_1$ being eigenvalues and each column of $P$ is a unit vector. Define $C_M:= \|P\|_2\|P^{-1}\|_2$, we have
    \begin{align}
        &\max\left(\frac{1}{n}\E\left[\|X_k - x^*\bfonet\|^2\right], \frac{1}{n}\E\left[\|X_k-\bar{x}_k\bfonet\|^2\right]\right) \notag \\
        \leq &3C_M\left(1-\frac{2\alpha\mu}{3}\right)^k\left(\E\left[\|\bar{x}_0 - x^*\|^2\right] + \E\left[\frac{\|X_0\|^2 + \|S_0\|^2}{n}\right]\right) + \frac{5C_M\|c\|}{\alpha\mu},   \label{ineq: quadratic_conv} \\
        &\frac{1}{n}\E\left[\|X_k\|^2\right]\leq 6C_M\left(1-\frac{2\alpha\mu}{3}\right)^k\left(\E\left[\|\bar{x}_0 - x^*\|^2\right] + \E\left[\frac{\|X_0\|^2 + \|S_0\|^2}{n}\right]\right) + \frac{10C_M\|c\|}{\alpha\mu} + 2\|x^*\|^2. \label{ineq: bdd_2_m}
    \end{align}
\end{lemma}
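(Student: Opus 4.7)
My plan is to derive the three coupled recursions in \eqref{ineq: dsgd_quadratic} and assemble them into a componentwise vector inequality $\mathbf{e}_{k+1} \preceq M \mathbf{e}_k + c$ for the error vector $\mathbf{e}_k := \left(\E[\|\bar{x}_k - x^*\|^2],\ \E[\|X_k - \bar{x}_k\bfonet\|^2]/n,\ \E[\|S_k - \bar{s}_k\bfonet\|^2]/n\right)^{\top}$; then I would unfold this linear recursion via the eigendecomposition $M = P\,\mathrm{diag}(\lambda_1,\lambda_2,\lambda_3)P^{-1}$ together with the spectral hypothesis $\rho(M) < 1 - 2\alpha\mu/3$ to obtain \eqref{ineq: quadratic_conv}, after which \eqref{ineq: bdd_2_m} follows from the triangle inequality $\|X_k\|^2 \leq 2\|X_k - x^*\bfonet\|^2 + 2n\|x^*\|^2$.

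\textbf{Establishing the three recursions.} Since $W$ is doubly stochastic and the algorithm initializes with $s_{i,0} = v_{i,0}$, a one-line induction gives the gradient-tracking invariance $\bar{s}_k = \bar{v}_k$, so the averaged primal update reads $\bar{x}_{k+1} = \bar{x}_k - \alpha \bar{v}_k$. For the first recursion I split $\bar{v}_k = \left(\frac{1}{n}\sum_i A_i\right)\bar{x}_k - \frac{1}{n}\sum_i b_i + E_k$, where $E_k$ lumps the consensus contribution $\frac{1}{n}\sum_i A_i(x_{i,k}-\bar{x}_k)$ with the mean-zero stochastic deviations; Lemma \ref{lem: gd_decrease} applied to the strongly convex quadratic $f(x) = \frac{1}{2n}\sum_i x^{\top}A_i x - (\frac{1}{n}\sum_i b_i)^{\top} x$ contracts the deterministic part at rate $1-\alpha\mu$, while a Young inequality with parameter calibrated to $\alpha\mu$ absorbs $E_k$ into the coefficient $M_{12} = (\tfrac{2\alpha}{\mu} + 2\alpha^2)\tilde{\sigma}_1^2$ and the additive constant $\alpha^2(3\sigma_1^2\|x^*\|^2 + \sigma_2^2)/n$. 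The $X$-consensus recursion is standard: multiply the $X$-update by $I - J_n/n$ on the right, use the spectral-gap bound $\|(W - J_n/n)Z\|_F \leq \rho\,\|Z - \bar{z}\bfonet\|_F$, and apply Young at ratio $\tfrac{1-\rho^2}{2\rho^2}$. The $S$-consensus recursion arises from the same spectral contraction plus a residual $\|V_{k+1} - V_k\|_F^2/n$, which I would expand via the identity $v_{i,k+1} - v_{i,k} = A_{i,k+1}(x_{i,k+1}-x_{i,k}) + (A_{i,k+1}-A_{i,k})x_{i,k} - (b_{i,k+1}-b_{i,k})$ and bound termwise using $\|A_{i,k}\| \leq \tilde{\sigma}_1$, the $\alpha$-scaled $x$-update, and independence of $A_{i,k+1}$ from $(x_{j,k}, A_{j,k}, b_{j,k})$; the decomposition $x_{i,k} = (x_{i,k}-\bar{x}_k) + (\bar{x}_k - x^*) + x^*$ combined with $(a+b+c)^2 \leq 3(a^2+b^2+c^2)$ accounts for the factors of $9$ and $18$ appearing in $C_1, C_2$ and cleanly separates the three error components with coefficients $M_{31}, M_{32}, M_{33}$.

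\textbf{Unfolding and the main obstacle.} Iterating $\mathbf{e}_{k+1} \preceq M \mathbf{e}_k + c$ gives $\mathbf{e}_k \preceq M^k \mathbf{e}_0 + \sum_{j=0}^{k-1} M^j c$. Since $M$ is diagonalizable with three distinct positive eigenvalues all bounded above by $\rho(M) < 1 - 2\alpha\mu/3$, the identity $M^j = P\,\mathrm{diag}(\lambda_1^j, \lambda_2^j, \lambda_3^j)P^{-1}$ yields the entrywise bound $|[M^j]_{\ell m}| \leq C_M(1 - 2\alpha\mu/3)^j$, and the geometric tail $\sum_j M^j c$ is bounded componentwise by $\tfrac{3C_M\|c\|}{2\alpha\mu}\bfone$; reading off the first two components and combining with $\|X_k - x^*\bfonet\|^2 \leq 2\|X_k - \bar{x}_k\bfonet\|^2 + 2n\|\bar{x}_k - x^*\|^2$ delivers \eqref{ineq: quadratic_conv} after collecting numerical constants into the prefactors $3$ and $5$; the bound \eqref{ineq: bdd_2_m} is then immediate. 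The main obstacle is the $S$-recursion: the cross term $A_{i,k+1}(x_{i,k+1}-x_{i,k})$ is a product of two stochastic quantities, and the sample independence must be invoked carefully so that its contribution is $O(\alpha^2)$ rather than $O(\alpha)$; this is precisely what keeps $M_{33} = \tfrac{1+\rho^2}{2} + O(\alpha^2)$ below $1$ for the permissible stepsize range \eqref{ineq: stepsize_for_quadratics} and produces the spectral-radius contraction rate $1 - 2\alpha\mu/3$ driving the exponential decay.
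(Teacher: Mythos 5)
Your proposal follows essentially the same route as the paper's proof: the same three coupled recursions assembled into the componentwise inequality $\Gamma_{k+1}\preceq M\Gamma_k + c$, the same use of the gradient-tracking invariance $\bar{s}_k=\bar{v}_k$ and the contraction lemma for the averaged iterate, and the same unfolding via the eigendecomposition of $M$ with the geometric-sum bound $\|\sum_j M^j\|_2 < \tfrac{3C_M}{2\alpha\mu}$. The only cosmetic difference is where you allocate the stochastic deviation $(A_{i,k}-A_i)x_{i,k}$ in the first recursion, which does not change the argument.
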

\begin{proof}[Proof of Lemma~\ref{lem: dsgd_quadratic}]
    Note that by definition of $\tilde{\sigma}_1^2$ and $\tilde{\sigma}_2^2$ we have
    \begin{equation}\label{ineq: quadratic_bdd_2_m}
        \begin{aligned}
            &\E\left[\|A_{i,k}\|^2\right] = \E\left[\|A_{i,k}-A_i\|^2\right] + \|A_i\|_2^2\leq \sigma_1^2 + L^2 = \tilde{\sigma}_1^2,\\
            &\E\left[\|b_{i,k}\|^2\right] = \E\left[\|b_{i,k}-b_i\|^2\right] + \|b_i\|^2 \leq \sigma_2^2 + \max_i\|b_i\|^2 = \tilde{\sigma}_2^2.
        \end{aligned}
    \end{equation}
    By $s_{i,0} = v_{i,0}$ we know $\bar{s}_0 = \bar{v}_0$. From the recursion we know
    \[
        \bar{s}_{k+1} = \bar{s}_k + \bar{v}_{k+1} - \bar{v}_k,
    \]
    and hence $\bar{s}_k = \bar{v}_k$ by induction. For $\bar{x}_k$ we know
    \begin{align*}
        &\bar{x}_{k+1} - x^* \\
        = &\bar{x}_k - x^* - \frac{\alpha}{n}\sum_{i=1}^{n}(A_{i,k}x_{i,k} - b_{i,k}) \\
        = &\bar{x}_k - x^* - \frac{\alpha}{n}\sum_{i=1}^{n}(A_i\bar{x}_k - b_i) + \frac{\alpha}{n}\sum_{i=1}^{n}(A_i\bar{x}_k - b_i) - \frac{\alpha}{n}\sum_{i=1}^{n}(A_{i,k}x_{i,k} - b_{i,k}) \\
        = &\left(I - \frac{\alpha}{n}\sum_{i=1}^{n}A_i\right)(\bar{x}_k - x^*) + \frac{\alpha}{n}\sum_{i=1}^{n}A_{i,k}(\bar{x}_k - x_{i,k}) + \frac{\alpha}{n}\sum_{i=1}^{n}((A_i - A_{i,k})\bar{x}_k + b_{i,k} - b_i).
    \end{align*}
    Using the above equality, $\E\left[A_{i,k}\right] = A_i$ and $\E\left[b_{i,k}\right] = b_i$, we know
    \begin{align*}
        &\E\left[\|\bar{x}_{k+1} - x^*\|^2\right] \\
        = &\E\left[\|\left(I - \frac{\alpha}{n}\sum_{i=1}^{n}A_i\right)(\bar{x}_k - x^*) + \frac{\alpha}{n}\sum_{i=1}^{n}A_{i,k}(\bar{x}_k - x_{i,k})\|^2\right] + \frac{\alpha^2}{n^2}\E\left[\|\sum_{i=1}^{n}((A_i - A_{i,k})\bar{x}_k + b_{i,k} - b_i)\|^2\right] \\
        + &\E\left[\<\left(I - \frac{\alpha}{n}\sum_{i=1}^{n}A_i\right)(\bar{x}_k - x^*) + \frac{\alpha}{n}\sum_{i=1}^{n}A_{i,k}(\bar{x}_k - x_{i,k}), \frac{\alpha}{n}\sum_{i=1}^{n}((A_i - A_{i,k})\bar{x}_k + b_{i,k} - b_i)> \right] \\
        \leq &\left(1+\frac{\alpha\mu}{2}\right)(1-\alpha\mu)^2\E\left[\|\bar{x}_k - x^*\|^2\right] + \left(1+\frac{2}{\alpha\mu}\right)\frac{\alpha^2\tilde{\sigma}_1^2}{n}\sum_{i=1}^{n}\E\left[\|\bar{x}_k - x_{i,k}\|^2\right] + \frac{\alpha^2}{n^2}(n\sigma_1^2\E\left[\|\bar{x}_k\|^2\right] + n\sigma_2^2) \\
        + & \frac{\alpha^2}{2n^2}\sum_{i=1}^{n}\E\left[\sigma_1^2\|\bar{x}_k\|^2 + \tilde{\sigma}_1^2\|\bar{x}_k - x_{i,k}\|^2\right] \\
        = &\left(1+\frac{\alpha\mu}{2}\right)(1-\alpha\mu)^2\E\left[\|\bar{x}_k - x^*\|^2\right] + \left(\frac{2\alpha}{\mu} + \alpha^2 + \frac{\alpha^2}{2n}\right)\frac{\tilde{\sigma}_1^2}{n}\E\left[\|X_k - \bar{x}_k\bfonet\|^2\right] + \frac{\alpha^2}{n}\left(\frac{3\sigma_1^2}{2}\E\left[\|\bar{x}_k\|^2\right] + \sigma_2^2\right) \\
        \leq &\left[\left(1+\frac{\alpha\mu}{2}\right)(1-\alpha\mu)^2 + \frac{3\alpha^2\sigma_1^2}{n}\right]\E\left[\|\bar{x}_k - x^*\|^2\right] + \left(\frac{2\alpha}{\mu} + 2\alpha^2\right)\frac{\tilde{\sigma}_1^2}{n}\E\left[\|X_k - \bar{x}_k\bfonet\|^2\right] + \frac{\alpha^2}{n}(3\sigma_1^2\|x^*\|^2 + \sigma_2^2) \\
        \leq & (1-\alpha\mu)\E\left[\|\bar{x}_k - x^*\|^2\right] + \left(\frac{2\alpha}{\mu} + 2\alpha^2\right)\frac{\tilde{\sigma}_1^2}{n}\E\left[\|X_k - \bar{x}_k\bfonet\|^2\right] + \frac{\alpha^2}{n}(3\sigma_1^2\|x^*\|^2 + \sigma_2^2).
    \end{align*}
    The first inequality holds because we have
    \begin{align*}
        &\E\left[\<\left(I - \frac{\alpha}{n}\sum_{i=1}^{n}A_i\right)(\bar{x}_k - x^*) + \frac{\alpha}{n}\sum_{i=1}^{n}A_{i,k}(\bar{x}_k - x_{i,k}), \frac{\alpha}{n}\sum_{i=1}^{n}((A_i - A_{i,k})\bar{x}_k + b_{i,k} - b_i)> \right] \\
        = &\E\left[\<\frac{\alpha}{n}\sum_{i=1}^{n}A_{i,k}(\bar{x}_k - x_{i,k}), \frac{\alpha}{n}\sum_{i=1}^{n}((A_i - A_{i,k})\bar{x}_k + b_{i,k} - b_i)> \right] \\
        =& \E\left[\< \frac{\alpha}{n}\sum_{i=1}^{n}A_{i,k}(\bar{x}_k - x_{i,k}), \frac{\alpha}{n}\sum_{i=1}^{n}(A_i - A_{i,k})\bar{x}_k> \right] \\
        = & \frac{\alpha^2}{n^2}\sum_{i=1}^{n}\E\left[(\bar{x}_k - x_{i,k})\T A_{i,k}\T(A_i-A_{i,k})\bar{x}_k\right] \leq \frac{\alpha^2}{2n^2}\sum_{i=1}^{n}\E\left[\sigma_1^2\|\bar{x}_k\|^2 + \tilde{\sigma}_1^2\|\bar{x}_k - x_{i,k}\|^2\right],
    \end{align*}
    the second inequality uses $\|\bar{x}_k\|^2\leq 2\|\bar{x}_k - x^*\|^2 + 2\|x^*\|^2$, and the third inequality uses \eqref{ineq: stepsize_for_quadratics}. For $\|X_{k+1} - \bar{x}_{k+1}\bfonet\|^2$ we know
    \begin{equation}\label{ineq: consensus_ineq}
        \begin{aligned}
            &\|X_{k+1} - \bar{x}_{k+1}\bfonet\|^2 = \|X_kW - \bar{x}_k\bfonet - \alpha (S_k - \bar{s}_k\bfonet)\|^2\\
        \leq &\left(1 + \frac{1-\rho^2}{2\rho^2}\right)\rho^2\|X_k-\bar{x}_k\bfonet\|^2 + \left(1 + \frac{2\rho^2}{1-\rho^2}\right)\alpha^2\|S_k-\bar{s}_k\bfonet\|^2.
        \end{aligned}
    \end{equation}
    The inequality uses Cauchy-Schwarz inequality and the fact that 
    \begin{align*}
        &\|X_kW - \bar{x}_k\bfonet\| = \|\left(X_k - \bar{x}_k\bfonet\right)\left(W - \frac{\bfone\bfonet}{n}\right)\| = \|\left(W - \frac{\bfone\bfonet}{n}\right)\left(X_k - \bar{x}_k\bfonet\right)\T\|\\
        \leq &\|W - \frac{\bfone\bfonet}{n}\|_2\|X_k - \bar{x}_k\bfonet\|\leq \rho\|X_k - \bar{x}_k\bfonet\|,
    \end{align*}
    where the last inequality uses Assumption \ref{assump: W}. For $\|S_{k} - \bar{s}_{k}\bfonet\|^2$ we know
    \begin{equation}\label{ineq: S_cons_onestep}
        \begin{aligned}
            & \|S_{k+1} - \bar{s}_{k+1}\bfonet\|^2 = \|S_kW - \bar{s}_k\bfonet + V_{k+1} - V_k - \bar{v}_{k+1}\bfonet + \bar{v}_k\bfonet\|^2 \\
        \leq &\left(1 + \frac{1-\rho^2}{2\rho^2}\right)\|S_k - \bar{s}_k\bfonet\|^2 + \left(1 + \frac{2\rho^2}{1-\rho^2}\right)\|\left(V_{k+1}-V_k\right)\left(I - \frac{\bfone\bfonet}{n}\right)\|^2 \\
        = &\frac{1+\rho^2}{2}\|S_k - \bar{s}_k\bfonet\|^2 + \frac{1+\rho^2}{1-\rho^2}\|V_{k+1} - V_k\|^2.
        \end{aligned}
    \end{equation}
    For $V_{k+1} - V_k$ we have
    \begin{align*}
        &\E\left[\|V_{k+1} - V_k\|^2\right] \\
        = &\sum_{i=1}^{n}\E\left[\|A_{i,k+1}(x_{i,k+1} - x_{i,k}) + (A_{i,k+1} - A_i + A_i - A_{i,k})x_{i,k} +(b_{i,k} - b_i + b_i - b_{i,k+1})\|^2\right] \\
        = &\sum_{i=1}^{n}\E\left[\|A_{i,k+1}(x_{i,k+1} - x_{i,k}) + (A_{i,k+1} - A_i)x_{i,k}\|^2 + \|(A_i - A_{i,k})x_{i,k}\|^2 + \|b_{i,k} - b_i\|^2 + \|b_i - b_{i,k+1}\|^2\right] \\
        \leq &\sum_{i=1}^{n}\E\left[2\|A_{i,k+1}(x_{i,k+1} - x_{i,k})\|^2 + 2\|(A_{i,k+1} - A_i)x_{i,k}\|^2 + \|(A_i - A_{i,k})x_{i,k}\|^2 + \|b_{i,k} - b_i\|^2 + \|b_i - b_{i,k+1}\|^2\right] \\
        \leq &2\tilde{\sigma}_1^2\E\left[\|X_{k+1}-X_k\|^2\right] + 3\sigma_1^2\E\left[\|X_k\|^2\right] + 2n\sigma_2^2.
    \end{align*}
    For $\|X_{k+1} - X_k\|$ we know
    \begin{align*}
        &\E\left[\|X_{k+1} - X_k\|^2\right] = \E\left[\|X_kW - X_k - \alpha S_kW\|^2\right]\\
        = &\E\left[\|\left(X_k - \bar{x}_k\bfonet\right)(W - I) - \alpha (S_kW - \bar{s}_k\bfonet) - \alpha\bar{s}_k\bfonet\|^2\right] \\
        \leq &3\|W - I\|_2^2\E\left[\|X_k-\bar{x}_k\bfonet\|^2\right] + 3\alpha^2\rho^2\E\left[\|S_k-\bar{s}_k\bfonet\|^2\right] + 3n\alpha^2\E\left[\|\bar{s}_k\|^2\right] \\
        \leq &6\E\left[\|X_k-\bar{x}_k\bfonet\|^2\right] + 3\alpha^2\rho^2\E\left[\|S_k-\bar{s}_k\bfonet\|^2\right] \\&\qquad\qquad + 3\alpha^2(\frac{\sigma_1^2}{n}\E\left[\|X_k\|^2\right] + \sigma_2^2 + 2L^2\E\left[\|X_k-\bar{x}_k\bfonet\|^2 + n\|\bar{x}_k-x^*\|^2\right]) \\
        = & (6 + 6\alpha^2L^2)\E\left[\|X_k-\bar{x}_k\bfonet\|^2\right] + 3\alpha^2\rho^2\E\left[\|S_k-\bar{s}_k\bfonet\|^2\right] + \frac{3\alpha^2\sigma_1^2}{n}\E\left[\|X_k\|^2\right] \\
        &\qquad \qquad+ 3n\alpha^2\E\left[\|\bar{x}_k-x^*\|^2\right] + 3\alpha^2\sigma_2^2,
    \end{align*}
    where the second inequality holds since
    \begin{align*}
        &\E\left[\|\bar{s}_k\|^2\right] = \E\left[\|\frac{1}{n}\sum_{i=1}^{n}(A_{i,k}x_{i,k} - b_{i,k})\|^2\right] \\
        = &\E\left[\|\frac{1}{n}\sum_{i=1}^{n}((A_{i,k} - A_i)x_{i,k} - (b_{i,k} - b_i)) + \frac{1}{n}\sum_{i=1}^{n}(A_ix_{i,k} - A_i\bar{x}_k) + \frac{1}{n}\sum_{i=1}^{n}A_i(\bar{x}_k - x^*)\|^2\right] \\
        = &\frac{1}{n^2}\sum_{i=1}^{n}\E\left[\|(A_{i,k} - A_i)x_{i,k}\|^2 + \|b_{i,k}-b_i\|^2\right] + \frac{1}{n^2}\E\left[\|\sum_{i=1}^{n}((A_ix_{i,k} - A_i\bar{x}_k) + A_i(\bar{x}_k - x^*))\|^2\right]\\
        \leq & \frac{\sigma_1^2}{n^2}\E\left[\|X_k\|^2\right] + \frac{\sigma_2^2}{n} + \frac{2L^2}{n}\E\left[\|X_k-\bar{x}_k\bfonet\|^2 + n\|\bar{x}_k-x^*\|^2\right].
    \end{align*}
    Hence we know
    \begin{align*}
        &\E\left[\|V_{k+1} - V_k\|^2\right] \leq 2\tilde{\sigma}_1^2\E\left[\|X_{k+1}-X_k\|^2\right] + 3\sigma_1^2\E\left[\|X_k\|^2\right] + 2n\sigma_2^2\\
        \leq &2\tilde{\sigma}_1^2\left\{(6 + 6\alpha^2L^2)\E\left[\|X_k-\bar{x}_k\bfonet\|^2\right] + 3\alpha^2\rho^2\E\left[\|S_k-\bar{s}_k\bfonet\|^2\right] + 3n\alpha^2\E\left[\|\bar{x}_k-x^*\|^2\right]\right\} \\
        + &\left(3\sigma_1^2 + \frac{6\alpha^2\sigma_1^2\tilde{\sigma}_1^2}{n}\right)\E\left[\|X_k\|^2\right] + (2n\sigma_2^2 + 6\alpha^2\sigma_2^2\tilde{\sigma}_1^2) \\
        \leq &nC_1\E\left[\|\bar{x}_k - x^*\|^2\right] + C_2\E\left[\|X_k-\bar{x}_k\bfonet\|^2\right] + \alpha^2C_3\E\left[\|S_k-\bar{s}_k\bfonet\|^2\right] + nC_4,
    \end{align*}
    where the second inequality uses 
    \begin{align*}
        \|X_k\|^2\leq 3\left[\|X_k - \bar{x}_k\bfonet\|^2 + n\|\bar{x}_k - x^*\|^2 + n\|x^*\|^2\right].
    \end{align*}
    The above inequalities and \eqref{ineq: S_cons_onestep} imply
    \begin{equation}
        \begin{aligned}
            &\frac{1}{n}\E\left[\|S_{k+1} - \bar{s}_{k+1}\bfonet\|^2\right]\\
            \leq &\frac{1+\rho^2}{2n}\|S_k - \bar{s}_k\bfonet\|^2 + \frac{1+\rho^2}{1-\rho^2}
            \left(C_1\E\left[\|\bar{x}_k - x^*\|^2\right] + C_2\E\left[\frac{\|X_k-\bar{x}_k\bfonet\|^2}{n}\right] + \alpha^2C_3\E\left[\frac{\|S_k-\bar{s}_k\bfonet\|^2}{n}\right] + C_4\right) \\
            \leq &\frac{1+\rho^2}{1-\rho^2}C_1\E\left[\|\bar{x}_k - x^*\|^2\right] + \frac{1+\rho^2}{1-\rho^2}C_2\E\left[\frac{\|X_k - \bar{x}_k\bfonet\|^2}{n}\right] + \left(\frac{1+\rho^2}{2} + \frac{1+\rho^2}{1-\rho^2}C_3\alpha^2\right)\E\left[\frac{\|S_k-\bar{s}_k\bfonet\|^2}{n}\right] +\frac{1+\rho^2}{1-\rho^2}C_4.
        \end{aligned}
    \end{equation}
    Now if we define
    \begin{align*}
        &\Gamma_k = \left(\E\left[\|\bar{x}_k - x^*\|^2\right],\ \E\left[\frac{\|X_k-\bar{x}_k\bfonet\|^2}{n}\right],\ \E\left[\frac{\|S_k-\bar{s}_k\bfonet\|^2}{n}\right]\right)\T,\ \\
        &c = \left(\frac{\alpha^2}{n}(3\sigma_1^2\|x^*\|^2 + \sigma_2^2),\ 0,\ \frac{(1+\rho^2)}{1-\rho^2}C_4\right)\T,\ M = 
        \begin{pmatrix}
            M_{11} &M_{12} &0 \\
            0 &M_{22} &M_{23} \\
            M_{31} &M_{32} &M_{33}
        \end{pmatrix}, \\
        & M_{11} = 1-\alpha\mu,\ M_{12} = \left(\frac{2\alpha}{\mu} + 2\alpha^2\right)\tilde{\sigma}_1^2,\ M_{22} = \frac{1+\rho^2}{2},\ M_{23} = \alpha^2\frac{1+\rho^2}{1-\rho^2} \\
        &M_{31} = \frac{1+\rho^2}{1-\rho^2}C_1,\ M_{32} = \frac{1+\rho^2}{1-\rho^2}C_2,\ M_{33} = \frac{1+\rho^2}{2} + \frac{1+\rho^2}{1-\rho^2}C_3\alpha^2,
    \end{align*}
    then by \eqref{ineq: dsgd_quadratic} we know $\Gamma_{i+1}\leq M\Gamma_i + c$ for any $i$, and thus 
    \[
        \Gamma_{k+1}\leq M\Gamma_k + c\leq...\leq M^{k+1}\Gamma_0 +\sum_{i=0}^{k}M^ic,
    \]
    where all the inequalities are element-wise. By \eqref{ineq: stepsize_for_quadratics} we know there exists an invertible matrix $P\in \R^{3\times 3}$ such that $M = P\cdot\text{diag}(\lambda_1, \lambda_2, \lambda_3)P^{-1}$, and $0<\lambda_3<\lambda_2<\lambda_1<1-\frac{2\alpha\mu}{3}$. Without loss of generality we may assume each column of $P$ (as an eigenvector) is a unit vector. Hence we know
    \begin{equation}\label{ineq: Mk}
        \|M^k\|_2 = \|P\cdot \text{diag}(\lambda_1^k, \lambda_2^k,\lambda_3^k)P^{-1}\|_2\leq \left(1-\frac{2\alpha\mu}{3}\right)^k\|P\|_2\|P^{-1}\|_2 = C_M\cdot \left(1-\frac{2\alpha\mu}{3}\right)^k,
    \end{equation}
    where we define $C_M := \|P\|_2\|P^{-1}\|_2$ in the last equality. Note that since we choose $P$ such that each column is a unit vector and $M = P\cdot\text{diag}(\lambda_1, \lambda_2, \lambda_3)P^{-1}$, $P$ is uniquely determined and $C_M$ is a continuous function of $\alpha$ and other constants $(\sigma_1,\ \sigma_2,\ \mu,\ L,\ \max_{i}\|b_i\|,\ \|x^*\|,\ n,\ \rho)$. On the other hand, observe that 
    \begin{align}\label{ineq: M_sum}
    \begin{aligned}
        \|\sum_{i=0}^{k}M^i\|_2 &= \|\sum_{i=0}^{k}P\cdot \text{diag}(\lambda_1^i, \lambda_2^i, \lambda_3^i)P^{-1}\|_2 = \|P\cdot \text{diag}\left(\sum_{i=0}^{k}\lambda_1^i, \sum_{i=0}^{k}\lambda_2^i, \sum_{i=0}^{k}\lambda_3^i\right)P^{-1}\|_2\\
        &\leq C_M\cdot \max_i \frac{1}{1-\lambda_i} < \frac{3C_M}{2\alpha\mu},
    \end{aligned}
    \end{align}
    where the last inequality uses the upper bound of the eigenvalues. For \eqref{ineq: quadratic_conv} we have
    \begin{align*}
        &\max\left(\frac{1}{n}\E\left[\|X_k - x^*\bfonet\|^2\right], \frac{1}{n}\E\left[\|X_k-\bar{x}_k\bfonet\|^2\right]\right) \leq \frac{2}{n}\E\left[\|X_k -\bar{x}_k\bfonet\|^2 + n\|\bar{x}_k - x^*\|^2\right]\leq 2\sqrt{2}\|\Gamma_k\| \\
        \leq &2\sqrt{2}\|M^k\Gamma_0 + \sum_{i=0}^{k-1}M^ic\|\leq 2\sqrt{2}(\|M^k\|_2\|\Gamma_0\| + \|\sum_{i=1}^{k-1}M^i\|_2\|c\|) \\
        \leq &2\sqrt{2}C_M\left(1-\frac{2\alpha\mu}{3}\right)^k\|\Gamma_0\| + 2\sqrt{2}\cdot\frac{3C_M}{2\alpha\mu}\|c\| \\
        \leq & 2\sqrt{2}C_M\left(1-\frac{2\alpha\mu}{3}\right)^k(\E\left[\|\bar{x}_0 - x^*\|^2\right] + \E\left[\frac{\|X_0-\bar{x}_0\bfonet\|^2}{n}\right] + \E\left[\frac{\|S_0 - \bar{s}_0\bfonet\|^2}{n}\right]) + \frac{3\sqrt{2}C_M\|c\|}{\alpha\mu} \\
        \leq &3C_M\left(1-\frac{2\alpha\mu}{3}\right)^k\left(\E\left[\|\bar{x}_0 - x^*\|^2\right] + \E\left[\frac{\|X_0\|^2 + \|S_0\|^2}{n}\right]\right) + \frac{5C_M\|c\|}{\alpha\mu},
    \end{align*}
    where the fifth inequality uses \eqref{ineq: Mk} and \eqref{ineq: M_sum}, and the seventh inequality uses the fact that $\|X_0-\bar{x}_0\bfonet\| = \|X_0\left(I - \frac{\bfone\bfonet}{n}\right)\|\leq \|X_0\|$ (same for $S_0$). \eqref{ineq: bdd_2_m} can be viewed as a corollary of the above inequality by noticing that
    \begin{align*}
        &\frac{1}{n}\E\left[\|X_k\|^2\right]\leq \frac{2}{n}\E\left[\|X_k - x^*\bfonet\|^2 + n\|x^*\|^2\right] \\
        \leq &6C_M\left(1-\frac{2\alpha\mu}{3}\right)^k\left(\E\left[\|\bar{x}_0 - x^*\|^2\right] + \E\left[\frac{\|X_0\|^2 + \|S_0\|^2}{n}\right]\right) + \frac{10C_M\|c\|}{\alpha\mu} + 2\|x^*\|^2.
    \end{align*}
\end{proof}
\noindent{\bf Remark:} 
\begin{itemize}
    \item Lemma~\ref{lem: dsgd_quadratic} characterizes convergence results of decentralized stochastic gradient descent with gradient tracking on strongly convex quadratic functions. Moreover, it also indicates that the second moment of $X_k$ can be bounded by using \eqref{ineq: bdd_2_m}, which will be used in proving the boundedness of second moment of $Z_t^{(k)}$ of our HIGP oracle.
    
    \item If we consider the same updates under the deterministic setting, then $\sigma_1 = \sigma_2 = 0$ and thus $\|c\|= 0$ by definition, which indicates the constant term in \eqref{ineq: quadratic_conv} vanishes (i.e., linear convergence). We will utilize this important observation in the next lemma.
\end{itemize}
 
\begin{lemma}\label{lem: dsgd_higp}
    Suppose Assumptions \ref{assump: lip_convexity}, \ref{assump: W} and \ref{assump: stoc_derivatives} hold. In Algorithm \ref{algo: HIGP_oracle} define
    \begin{align*}
        &C_1 = 9\sigma_{g,2}^2 + 6\gamma^2(\sigma_{g,2}^2 + L_{g,1}^2) + \frac{18\gamma^2\sigma_{g,2}^2(\sigma_{g,2}^2 + L_{g,1}^2)}{n},\\
        &C_2 = 12(\sigma_{g,2}^2 + L_{g,1}^2) + 9\sigma_{g,2}^2 + 12\gamma^2L_{g,1}^2(\sigma_{g,2}^2 + L_{g,1}^2)  + \frac{18\gamma^2\sigma_{g,2}^2(\sigma_{g,2}^2 + L_{g,1}^2)}{n}, \\
        &C_3 = 6\rho^2(\sigma_{g,2}^2 + L_{g,1}^2),\ C_4 = 2\sigma_{f}^2 + \frac{6\gamma^2\sigma_{f}^2(\sigma_{g,2}^2 + L_{g,1}^2)}{n} + (9\sigma_{g,2}^2 + \frac{18\gamma^2\sigma_{g,2}^2(\sigma_{g,2}^2 + L_{g,1}^2)}{n})\|x^*\|^2, \\
        &c = \left(\frac{\gamma^2}{n}\left(3\sigma_{g,2}^2\frac{L_{f,0}^2}{\mu_g^2} + \sigma_f^2\right),\ 0,\ \frac{(1+\rho^2)}{1-\rho^2}C_4\right)\T,\ M = 
        \begin{pmatrix}
            M_{11} &M_{12} &0 \\
            0 &M_{22} &M_{23} \\
            M_{31} &M_{32} &M_{33}
        \end{pmatrix}, \\
        & M_{11} = 1-\gamma\mu_g,\ M_{12} = \left(\frac{2\gamma}{\mu_g} + 2\gamma^2\right)(\sigma_{g,2}^2 + L_{g,1}^2),\ M_{22} = \frac{1+\rho^2}{2},\ M_{23} = \gamma^2\frac{1+\rho^2}{1-\rho^2}, \\
        &M_{31} = \frac{1+\rho^2}{1-\rho^2}C_1,\ M_{32} = \frac{1+\rho^2}{1-\rho^2}C_2,\ M_{33} = \frac{1+\rho^2}{2} + \frac{1+\rho^2}{1-\rho^2}C_3\gamma^2.
    \end{align*}
    Define $\tilde{M}$ to be matrix $M$ and $C_{\tilde{M}}$ to be $C_{M}$ when $\sigma_{g,2} = \sigma_{f} = 0$. If $\gamma$ satisfies
    \begin{equation}\label{ineq: gamma_for_quadratics}
        \begin{aligned}
            &\left(1+\frac{\gamma\mu_g}{2}\right)(1-\gamma\mu_g)^2 + \frac{3\gamma^2\sigma_{g,2}^2}{n}<1-\gamma\mu_g,\ 0< \gamma_1\leq \gamma \leq \gamma_2 \text{ for some } 0<\gamma_1<\gamma_2, \\
            &\max\left(\rho(\tilde{M}), \rho(M)\right) < 1 - \frac{2\gamma\mu_g}{3},\  \text{both } M \text{ and } \tilde{M} \text{have 3 different positive eigenvalues,}  
        \end{aligned}
    \end{equation}
    then for any $0\leq t\leq N$ we have
    \begin{align}
        &\E\left[\|\bar{z}_t^{(k)}\|^2|\F_k\right]\leq\frac{1}{n}\E\left[\|Z_t^{(k)}\|^2|\F_k\right]\leq \sigma_z^2 := 6C_M\left(\frac{L_{f,0}^2}{\mu_g^2} + \sigma_{f}^2 + L_{f,0}^2\right) + \frac{10C_M\|c\|}{\gamma\mu_g} + \frac{2L_{f,0}^2}{\mu_g^2}, \label{ineq: z_bdd_2_m} \\
        &\frac{1}{n}\|\E\left[Z_{t}^{(k)} - \bar{z}_{t}^{(k)}\bfonet|\F_k\right]\|^2\leq 3C_{\tilde{M}}\left(1-\frac{2\gamma\mu_g}{3}\right)^t\left(\frac{L_{f,0}^2}{\mu_g^2} + L_{f,0}^2\right). \label{ineq: z_exp_cons}
    \end{align}
\end{lemma}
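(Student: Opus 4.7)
The plan is to apply Lemma \ref{lem: dsgd_quadratic} twice, once to the stochastic iterates themselves and once to the sequence of conditional expectations. Conditioning on $\F_k$ freezes $x_{i,k}$ and $y_{i,k}^{(T)}$, so inside the HIGP loop we are exactly in the setting of Lemma \ref{lem: dsgd_quadratic} with the identifications $A_i \leftrightarrow \nabla_y^2 g_i(x_{i,k},y_{i,k}^{(T)})$, $A_{i,t} \leftrightarrow \nabla_y^2 g_i(x_{i,k},y_{i,k}^{(T)};\xi_{i,t})$, $b_i \leftrightarrow \nabla_y f_i(x_{i,k},y_{i,k}^{(T)})$, $b_{i,t} \leftrightarrow \nabla_y f_i(x_{i,k},y_{i,k}^{(T)};\phi_{i,t})$, and stepsize $\alpha \leftrightarrow \gamma$. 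Strong convexity and smoothness of $h_i$ follow from Assumption \ref{assump: lip_convexity} with $\mu = \mu_g$, $L = L_{g,1}$, and the variance bounds are $\sigma_1 = \sigma_{g,2}$, $\sigma_2 = \sigma_f$. The ``true'' solution in this application is $x^* = z_*^{(k)}$, and since $\|b_i\| = \|\nabla_y f_i(x_{i,k},y_{i,k}^{(T)})\| \leq L_{f,0}$, the strong convexity of $\sum_i h_i$ yields $\|z_*^{(k)}\|\leq L_{f,0}/\mu_g$.

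First I will handle the initial conditions. By construction $Z_0^{(k)} = 0$, so $\bar z_0^{(k)} = 0$ and $\|\bar z_0^{(k)} - z_*^{(k)}\|^2 \leq L_{f,0}^2/\mu_g^2$; also $\|X_0\|^2/n = 0$. For $S_0^{(k)}$, we have $s_{i,0}^{(k)} = -b_{i,0}^{(k)} = -\nabla_y f_i(x_{i,k},y_{i,k}^{(T)};\phi_{i,0})$, so the bounded variance assumption plus $\|\nabla_y f_i\|\leq L_{f,0}$ gives $\E[\|S_0^{(k)}\|^2|\F_k]/n \leq \sigma_f^2 + L_{f,0}^2$. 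Plugging these into \eqref{ineq: bdd_2_m} of Lemma \ref{lem: dsgd_quadratic} and using $\left(1-\tfrac{2\gamma\mu_g}{3}\right)^t \leq 1$ produces
\[
\frac{1}{n}\E[\|Z_t^{(k)}\|^2 \mid \F_k] \leq 6C_M\Bigl(\tfrac{L_{f,0}^2}{\mu_g^2} + \sigma_f^2 + L_{f,0}^2\Bigr) + \tfrac{10 C_M \|c\|}{\gamma\mu_g} + \tfrac{2L_{f,0}^2}{\mu_g^2} = \sigma_z^2,
\]
which is \eqref{ineq: z_bdd_2_m}; the estimate on $\|\bar z_t^{(k)}\|^2$ is immediate from Jensen's inequality.

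For \eqref{ineq: z_exp_cons} I will take conditional expectations of the HIGP updates. Writing $\dot z_{i,t}^{(k)} = \E[z_{i,t}^{(k)}|\F_k]$, $\dot d_{i,t}^{(k)} = \E[d_{i,t}^{(k)}|\F_k]$, $\dot s_{i,t}^{(k)} = \E[s_{i,t}^{(k)}|\F_k]$, the linearity of expectation together with unbiasedness of the stochastic Hessians and stochastic gradients (Assumption \ref{assump: stoc_derivatives}) yields exactly the deterministic gradient-tracking recursion on the quadratic objective $\sum_i h_i$, as displayed in the proof sketch. This is the $\sigma_1 = \sigma_2 = 0$ instance of Lemma \ref{lem: dsgd_quadratic}, so the matrix becomes $\tilde M$ and the constant vector $c$ vanishes. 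Applying \eqref{ineq: quadratic_conv} with zero initial point, zero initial consensus error, and $\E[\|\dot S_0^{(k)}\|^2|\F_k]/n \leq L_{f,0}^2$ (using $\|\nabla_y f_i\|\leq L_{f,0}$) gives
\[
\tfrac{1}{n}\|\E[Z_t^{(k)} - \bar z_t^{(k)}\bfonet \mid \F_k]\|^2 \leq 3C_{\tilde M}\Bigl(1 - \tfrac{2\gamma\mu_g}{3}\Bigr)^t\Bigl(\tfrac{L_{f,0}^2}{\mu_g^2} + L_{f,0}^2\Bigr),
\]
which is \eqref{ineq: z_exp_cons}.

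The main obstacle is a bookkeeping one rather than a conceptual one: carefully verifying that the deterministic sequence $\{\dot z, \dot d, \dot s\}$ really does satisfy the hypotheses of Lemma \ref{lem: dsgd_quadratic} with the stated initial bounds, so that the constant term in \eqref{ineq: quadratic_conv} vanishes and linear decay is exposed. Once that is in place, both claims reduce to direct substitutions, and the stepsize range \eqref{ineq: gamma_for_quadratics} is precisely engineered to make both $M$ and $\tilde M$ satisfy the spectral condition required by Lemma \ref{lem: dsgd_quadratic}.
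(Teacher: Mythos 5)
Your proposal is correct and follows essentially the same route as the paper: both bounds are obtained by instantiating Lemma~\ref{lem: dsgd_quadratic} with $A_{i,t}\leftrightarrow H_{i,t}^{(k)}$, $b_{i,t}\leftrightarrow b_{i,t}^{(k)}$ conditioned on $\F_k$, invoking \eqref{ineq: bdd_2_m} with the initial bounds $\bar z_0^{(k)}=0$, $\|X_0\|=0$, $\E[\|S_0\|^2|\F_k]/n\leq \sigma_f^2+L_{f,0}^2$ for \eqref{ineq: z_bdd_2_m}, and then passing to conditional expectations to land in the noiseless ($\sigma_1=\sigma_2=0$) case of \eqref{ineq: quadratic_conv}, where $c=0$ and $M$ becomes $\tilde M$, for \eqref{ineq: z_exp_cons}. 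The only nitpick is that the fixed point $x^*$ of this instance is $\bigl(\sum_i\nabla_y^2g_i(x_{i,k},y_{i,k}^{(T)})\bigr)^{-1}\bigl(\sum_i\nabla_yf_i(x_{i,k},y_{i,k}^{(T)})\bigr)$ evaluated at the current iterates rather than $z_*^{(k)}$ (which is evaluated at $(\bar x_k, y_k^*)$), but since you only use $\|x^*\|\leq L_{f,0}/\mu_g$, which holds for either, the argument is unaffected.
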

\begin{proof}[Proof of Lemma~\ref{lem: dsgd_higp}]
    Note that \eqref{ineq: z_bdd_2_m} is a direct results of Lemma \ref{lem: dsgd_quadratic} by noticing that 
    \begin{align*}
        &z_{i,t+1}^{(k)} = \sum_{j=1}^{n}w_{ij}z_{j,t}^{(k)} - \gamma d_{i,t}^{(k)},\  Z_0^{(k)} = 0,\\
        &d_{i,t+1}^{(k)} =\sum_{i=1}^{n}w_{ij}d_{j,t}^{(k)} + s_{i,t+1}^{(k)} - s_{i,t}^{(k)},\ s_{i,t}^{(k)} = H_{i,t}^{(k)}z_{i,t}^{(k)}- b_{i,t}^{(k)}, \\
        &\E\left[H_{i,t}^{(k)}|\F_k\right] = \nabla_y^2g_i(x_{i,k},y_{i,k}^{(T)}) ,\ \E\left[\|H_{i,t}^{(k)}-\nabla_y^2g_i(x_{i,k},y_{i,k}^{(T)})\|^2|\F_k\right]\leq \sigma_{g,2}^2, \\
        &\E\left[b_{i,t}^{(k)}|\F_k\right] = \nabla_yf_i(x_{i,k},y_{i,k}^{(T)}),\ \E\left[\|b_{i,t}^{(k)} - \nabla_yf_i(x_{i,k}, y_{i,k}^{(T)})\|^2|\F_k\right]\leq \sigma_f^2,
    \end{align*}
    for any $k\geq 0, 1\leq i\leq n,$ and $t\geq 0$. Assumption \ref{assump: lip_convexity} also indicates that 
    \begin{align*}
        \mu_g I\preceq \nabla_y^2g_i(x_{i,k},y_{i,k}^{(T)})\preceq L_{g,1} I,\ \|\nabla_yf_i(x_{i,k},y_{i,k}^{(T)})\|\leq L_{f,0}.
    \end{align*}
    Hence we know by \eqref{ineq: bdd_2_m},
    \begin{align*}
        &\E\left[\|\bar{z}_t^{(k)}\|^2|\F_k\right]\leq \frac{1}{n}\E\left[\|Z_t^{(k)}\|^2|\F_k\right]\leq 6C_M\left(1-\frac{2\gamma\mu_g}{3}\right)^k\left(\frac{L_{f,0}^2}{\mu_g^2} + \sigma_{f}^2 + L_{f,0}^2\right) + \frac{10C_M\|c\|}{\gamma\mu_g} + \frac{2L_{f,0}^2}{\mu_g^2}\leq \sigma_z^2,
    \end{align*}
    which proves \eqref{ineq: z_bdd_2_m}. To prove \eqref{ineq: z_exp_cons}, we notice that in expectation, the updates can be written as
    \begin{align*}
        &\E\left[z_{i,t+1}^{(k)}|\F_k\right] = \sum_{j=1}^{n}w_{ij}\E\left[z_{j,t}^{(k)}|\F_k\right] - \gamma \E\left[d_{i,t}^{(k)}|\F_k\right],\  Z_0^{(k)} = 0,\\
        &\E\left[d_{i,t+1}^{(k)}|\F_k\right] =\sum_{i=1}^{n}w_{ij}\E\left[d_{j,t}^{(k)}|\F_k\right] + \E\left[s_{i,t+1}^{(k)}|\F_k\right] - \E\left[s_{i,t}^{(k)}|\F_k\right], \\
        &\E\left[s_{i,t}^{(k)}|\F_k\right] = \nabla_y^2g_i(x_{i,k},y_{i,k}^{(T)})\E\left[z_{i,t}^{(k)}|\F_k\right]- \nabla_yf_i(x_{i,k},y_{i,k}^{(T)}).
    \end{align*}
    The updates of $\E\left[z_{i,t}^{(k)}|\F_k\right]$ can be viewed as a noiseless case (i.e., $\sigma_{g,2} =\sigma_f = 0$) of Lemma \ref{lem: dsgd_quadratic}. Using this observation, \eqref{ineq: quadratic_conv}, and the definition of $\|c\|$ and $\tilde{M}$, we know \eqref{ineq: z_exp_cons} holds.
\end{proof}
Now we provide a technical lemma that guarantees \eqref{ineq: stepsize_for_quadratics} and \eqref{ineq: gamma_for_quadratics}. For simplicity we can just consider \eqref{ineq: stepsize_for_quadratics}.

\begin{lemma}\label{ineq: stepsize_for_higp}
    Let $M$ be the matrix defined in Lemma \ref{lem: dsgd_quadratic}. There exist $0<\alpha_1<\alpha_2$ such that $\alpha\in (\alpha_1, \alpha_2)$ and
    \begin{align}
        &\left(1+\frac{\alpha\mu}{2}\right)(1-\alpha\mu)^2 + \frac{3\alpha^2\sigma_1^2}{n}<1-\alpha\mu, \label{ineq: lr_first_constraint}\\
        &\rho(M) < 1 - \frac{2\alpha\mu}{3},\ \text{ and }~M~\text{has 3 different positive eigenvalues.}  \label{ineq: lr_second_constraint}
    \end{align}
\end{lemma}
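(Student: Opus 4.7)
My plan is to show that each of the three conditions of the lemma holds on some open set of positive $\alpha$ and then take their intersection. Since the eigenvalues of $M(\alpha)$ depend continuously on $\alpha$, every condition is open, so it suffices to exhibit a common interval near $\alpha = 0^+$ on which all of them hold simultaneously.

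\textbf{First inequality.} For \eqref{ineq: lr_first_constraint}, direct algebra rewrites the difference between the two sides as
\begin{equation*}
    -\tfrac{\alpha\mu}{2}\bigl(1 - \alpha^2\mu^2\bigr) + \tfrac{3\alpha^2\sigma_1^2}{n},
\end{equation*}
which is negative exactly when $6\alpha\sigma_1^2/n < \mu\bigl(1 - \alpha^2\mu^2\bigr)$. The right side tends to $\mu > 0$ and the left side to $0$ as $\alpha \to 0^+$, so the inequality holds on some interval $(0,\alpha_A)$.

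\textbf{Spectral radius.} At $\alpha = 0$ one reads off
\begin{equation*}
    M(0) = \begin{pmatrix} 1 & 0 & 0 \\ 0 & \rho_+ & 0 \\ M_{31}(0) & M_{32}(0) & \rho_+ \end{pmatrix},\qquad \rho_+ := \tfrac{1+\rho^2}{2} \in (0,1),
\end{equation*}
with $M_{31}(0), M_{32}(0) > 0$. Its eigenvalues are $1$ (simple, since $\rho_+ < 1$) and $\rho_+$ (algebraic multiplicity $2$). Because the eigenvalue $1$ is simple, standard first-order perturbation yields $\lambda_1(\alpha) = 1 + \alpha\cdot v^\top M'(0) u + O(\alpha^2)$, where $v = (1,0,0)^\top$ and $u = \bigl(1,\, 0,\, M_{31}(0)/(1-\rho_+)\bigr)^\top$ are left and right eigenvectors of $M(0)$ for eigenvalue $1$, normalized so $v^\top u = 1$. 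Noting that only $M_{11}$ and $M_{12}$ have nonzero first derivatives at $\alpha = 0$ (with $M_{11}'(0) = -\mu$), a direct calculation gives $\lambda_1'(0) = -\mu$, so $\lambda_1(\alpha) = 1 - \alpha\mu + O(\alpha^2) < 1 - 2\alpha\mu/3$ for small $\alpha > 0$. The remaining two eigenvalues approach $\rho_+ < 1$ as $\alpha \to 0$ and therefore stay below $1 - 2\alpha\mu/3$ on a neighborhood of the origin, giving a second interval $(0,\alpha_B)$.

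\textbf{Three distinct positive eigenvalues (main difficulty).} Since $M_{32}(0) \ne 0$, the geometric multiplicity of $\rho_+$ at $\alpha = 0$ is only $1$: $M(0)$ carries a Jordan block, and simple-eigenvalue perturbation does not directly separate the doubled eigenvalue. The crucial observation is that the bottom-right $2\times 2$ block of $M'(0)$ vanishes (all of $M_{22}, M_{23}, M_{32}, M_{33}$ have zero first derivative at $\alpha = 0$), so the effective perturbation within the two-dimensional invariant subspace is $O(\alpha^2)$ rather than $O(\alpha)$, which makes a linear (rather than $\sqrt{\alpha}$) splitting ansatz consistent. Substituting $\lambda = \rho_+ + c\alpha$ into
\begin{equation*}
    p(\lambda,\alpha) = (\lambda - M_{11})\bigl[(\lambda - M_{22})(\lambda - M_{33}) - M_{23}M_{32}\bigr] - M_{12}M_{23}M_{31},
\end{equation*}
and using $(\lambda - M_{11}) = (\rho_+ - 1) + O(\alpha)$, $(\lambda - M_{22})(\lambda - M_{33}) = c^2\alpha^2 + O(\alpha^3)$, $M_{23}M_{32} = K_2\alpha^2 + O(\alpha^4)$ with $K_2 := \bigl(\tfrac{1+\rho^2}{1-\rho^2}\bigr)^2\bigl(12\tilde\sigma_1^2 + 9\sigma_1^2\bigr) > 0$, and $M_{12}M_{23}M_{31} = O(\alpha^3)$, the leading-order equation becomes $(\rho_+ - 1)\alpha^2(c^2 - K_2) = 0$. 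Since $\rho_+ \ne 1$, this forces $c = \pm\sqrt{K_2}$, producing two distinct eigenvalues $\rho_+ \pm \sqrt{K_2}\,\alpha + O(\alpha^2)$. Combined with $\lambda_1(\alpha) \approx 1 - \alpha\mu$, for sufficiently small $\alpha > 0$ the three eigenvalues are distinct and positive (being close to $1,\ \rho_+,\ \rho_+$). This yields a third interval $(0,\alpha_C)$; taking $\alpha_1, \alpha_2$ inside $(0, \min(\alpha_A,\alpha_B,\alpha_C))$ completes the proof.
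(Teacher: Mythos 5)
Your proof is correct, but it takes a genuinely different route from the paper's. The paper works directly with the characteristic polynomial $\varphi(\lambda)=\det(\lambda I - M)$ and imposes sign conditions at the four pivot points $0$, $M_{22}$, $M_{11}$, and $1-\tfrac{2\alpha\mu}{3}$; the intermediate value theorem then brackets the three roots as $0<\lambda_3<M_{22}<\lambda_2<M_{11}<\lambda_1<1-\tfrac{2\alpha\mu}{3}$, and the sign conditions are reduced to explicit polynomial inequalities $\varphi_1(\alpha),\varphi_2(\alpha),\varphi_3(\alpha)>0$ that hold at $\alpha=0$ and hence on an interval, which yields a semi-explicit admissible range for $\alpha$. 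You instead run eigenvalue perturbation theory at $\alpha=0^+$: first-order perturbation of the simple eigenvalue $1$ (your left/right eigenvectors and the computation $\lambda_1'(0)=-\mu$ are correct), and, for the defective double eigenvalue $\rho_+$, the key observation that the lower-right $2\times 2$ block of $M$ is perturbed only at order $\alpha^2$, so the Jordan block splits linearly and along the real axis because $c^2=K_2>0$ has two distinct real roots. This is the genuinely nontrivial part and your leading-order bookkeeping checks out against the definitions of $M_{12},M_{23},M_{31},M_{32}$. Two small remarks: to make the splitting step airtight you should substitute $\lambda=\rho_+ +\alpha w$, factor out $\alpha^2$, and apply the implicit function theorem to the resulting polynomial whose $\alpha=0$ limit is $(\rho_+-1)(w^2-K_2)$ with simple roots — the "consistent ansatz" phrasing gestures at this but does not quite state it; and $M_{31}(0)=\tfrac{1+\rho^2}{1-\rho^2}\cdot 9\sigma_1^2$ vanishes when $\sigma_1=0$, so the blanket claim $M_{31}(0)>0$ is not always true (harmless here, since only $M_{32}(0)>0$ is needed for the Jordan-block structure). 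The trade-off: the paper's argument produces usable constants $\alpha_1,\alpha_2$, while yours is purely existential near $0^+$ but isolates more transparently why the double eigenvalue separates into two real branches.
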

\begin{proof}[Proof of Lemma~\ref{ineq: stepsize_for_higp}]
    Note that \eqref{ineq: lr_first_constraint} is equivalent to
    \[
        \mu^3\alpha^2 + \frac{6\alpha\sigma_1^2}{n} - \mu < 0,
    \]
    which implies any $\alpha_1,\ \alpha_2$ satisfying
    \begin{equation}\label{ineq: alpha_for_first}
        0<\alpha_1<\alpha_2<\frac{\sqrt{9\sigma_1^4 + n^2\mu^4} - 3\sigma_1^2}{n\mu^3}
    \end{equation}
    will ensure \eqref{ineq: lr_first_constraint}. Next we consider \eqref{ineq: lr_second_constraint}. Define
    \[
        \varphi(\lambda) := \det(\lambda I - M) = \prod_{i=1}^{3}(\lambda - M_{ii}) - M_{23}M_{32}(\lambda - M_{11}) - M_{12}M_{23}M_{31}.
    \]
    We know that a sufficient condition to guarantee \eqref{ineq: lr_second_constraint} is
    \begin{equation}\label{ineq: varphi_condition}
        \varphi\left(1-\frac{2\alpha\mu}{3}\right) > 0,\ \varphi(M_{11}) < 0,\ \varphi(M_{22}) > 0,\ \varphi(0)< 0,\ M_{11} > M_{22},
    \end{equation}
    since this implies $0< M_{22}< M_{11} = 1-\alpha\mu < 1 - \frac{2\alpha\mu}{3}$ and
    \[
        \varphi\left(1-\frac{2\alpha\mu}{3}\right)\cdot \varphi(M_{11}) < 0,\ \varphi(M_{11})\cdot \varphi(M_{22}) < 0,\ \varphi(M_{22})\cdot \varphi(0) < 0,
    \]
    which together with continuity of $\varphi$ indicate the roots of $\varphi(\lambda)= 0$ (i.e., the eigenvalues of $M$, denoted as $\lambda_1, \lambda_2,\ \lambda_3$ in descending order) satisfy
    \[
        0< \lambda_3 < M_{22} < \lambda_2 < M_{11} < \lambda_1 < 1 - \frac{2\alpha\mu}{3}.
    \]
    The condition $\varphi(M_{11})<0$ is automatically true by definition of $\varphi$ and $M$, and for the rest of the conditions in \eqref{ineq: varphi_condition} we have
    \begin{align*}
        &\varphi\left(1-\frac{2\alpha\mu}{3}\right) > 0 \\
        \Leftrightarrow &\alpha\cdot \varphi_1(\alpha):=\frac{\alpha\mu}{3}\left[\left(\frac{1-\rho^2}{2} - \frac{2\alpha\mu}{3}\right)\left(\frac{1-\rho^2}{2} - \frac{2\alpha\mu}{3} - \frac{1+\rho^2}{1-\rho^2}C_3\alpha^2\right) - \left(\frac{1+\rho^2}{1-\rho^2}\right)^2C_2\alpha^2\right] \\
        &\hspace{5em} - \left(\frac{1+\rho^2}{1-\rho^2}\right)^2C_1\alpha^2\left(\frac{2\alpha}{\mu} + 2\alpha^2\right)\tilde{\sigma}_1^2 > 0, \\
        &\varphi(M_{22}) > 0 \Leftrightarrow M_{23}((M_{11} - M_{22})M_{32} - M_{12}M_{31}) > 0 \Leftrightarrow (M_{11} - M_{22})M_{32} - M_{12}M_{31} > 0\\
        \Leftrightarrow&\varphi_2(\alpha):= \left(\frac{1-\rho^2}{2} - \alpha\mu\right)\frac{1+\rho^2}{1-\rho^2}C_2 - \frac{1+\rho^2}{1-\rho^2}C_1\left(\frac{2\alpha}{\mu} + 2\alpha^2\right)\tilde{\sigma}_1^2 > 0,\ \text{(by definition of } C_2, C_2 > 0 \text{ when }\alpha = 0 \text{)}\\
        &\varphi(0)<0 \Leftrightarrow -M_{11}(M_{22}M_{33} - M_{23}M_{32}) - M_{12}M_{23}M_{31} < 0 \Leftarrow M_{22}M_{33} - M_{23}M_{32} >0 \\
        \Leftrightarrow &\varphi_3(\alpha) := \frac{1+\rho^2}{2}\left(\frac{1+\rho^2}{2} + \frac{1+\rho^2}{1-\rho^2}C_3\alpha^2\right) - \left(\frac{1+\rho^2}{1-\rho^2}\right)^2C_2\alpha^2 > 0, \\
        &M_{11} > M_{22}\Leftrightarrow \alpha< \frac{1-\rho^2}{2\mu}.
    \end{align*}
    Hence a sufficient condition for \eqref{ineq: varphi_condition} is
    \[
        \varphi_1(\alpha) > 0,\ \varphi_2(\alpha) >0,\ \varphi_3(\alpha) > 0,\ \alpha < \frac{1-\rho^2}{2\mu}.
    \]
    Given the expressions of $\varphi_i(\alpha)$ above, we know they satisfy $\varphi_i(0) > 0$. Hence we can define $\beta$ to be the minimum positive constant such that $\varphi_1(\beta)\varphi_2(\beta)\varphi_3(\beta) = 0$, and 
    \[
        \alpha_2 = \min\left(\frac{\sqrt{9\sigma_1^4 + n^2\mu^4} - 3\sigma_1^2}{n\mu^3}, \frac{1-\rho^2}{2\mu}, \beta\right),\ \alpha_1 = \text{ any constant in } (0,\alpha_2),
    \]
    which implies that for any $\alpha\in (\alpha_1,\alpha_2)$, we always have
    \[
        \varphi_1(\alpha) > 0,\ \varphi_2(\alpha) >0,\ \varphi_3(\alpha) > 0,\ \alpha < \frac{\sqrt{9\sigma_1^4 + n^2\mu^4} - 3\sigma_1^2}{n\mu^3},\ \alpha < \frac{1-\rho^2}{2\mu},
    \]
    because of the definition of $\beta$, and $\varphi_i(0) = 0$ for all $1\leq i\leq 3$.  \eqref{ineq: alpha_for_first}. The above expression implies \eqref{ineq: alpha_for_first} and \eqref{ineq: varphi_condition}, and hence \eqref{ineq: lr_first_constraint} and \eqref{ineq: lr_second_constraint} are satisfied.
\end{proof}

\noindent {\bf Remark:} 
\begin{itemize}
    \item One can follow the proof of Corollary 1 in \citet{pu2021distributed} to obtain an explicit dependence between $\alpha_1, \alpha_2$ and other parameters, which is purely technical and we omit it in this lemma.
    
    \item Define $\tilde{\alpha}_2$ to be the constant $\alpha_2$ when $\sigma_1 = \sigma_2 = 0$ in the above lemma. We can check that the proof is still valid and thus for any $\alpha\in (\frac{\min\left(\alpha_2, \tilde{\alpha}_2\right)}{2}, \min\left(\alpha_2, \tilde{\alpha}_2\right) )$ we have
    \begin{align*}
        &\left(1+\frac{\alpha\mu}{2}\right)(1-\alpha\mu)^2 + \frac{3\alpha^2\sigma_1^2}{n}<1-\alpha\mu,\\
        &\max\left(\rho(\tilde{M}), \rho(M)\right) < 1 - \frac{2\alpha\mu}{3},\ \text{both } M~\text{and}~ \tilde{M}~\text{have 3 different positive eigenvalues}, 
    \end{align*}
    and thus the existence of $\gamma_1$ and $\gamma_2$ in \eqref{ineq: gamma_for_quadratics} is also guaranteed.
\end{itemize}

Using Lemma \ref{lem: dsgd_higp} we could directly bound $\|X_k - \bar{x}_k\bfonet\|^2$ and $\|Y_k^{(t+1)} - \bar{y}_k^{(t+1)}\bfonet\|^2$.
\begin{lemma}\label{lem: XY_cons}
    Suppose Assumptions \ref{assump: lip_convexity}, \ref{assump: W}, \ref{assump: similarity_of_g}, and \ref{assump: stoc_derivatives} hold. Define
    \begin{align*}
        &\sigma_u^2 = 2(L_{f,0}^2 + \sigma_f^2) + 2(L_{g,1}^2 + \sigma_{g,2}^2)\sigma_z^2,\ \sigma_x^2 = \frac{1+\rho^2}{1-\rho^2}\cdot \sigma_u^2,\ \tilde{\alpha}_{k+1}^2 = \sum_{i=0}^{k}\alpha_i^2\left(\frac{1+\rho^2}{2}\right)^{k-i}, \\
        &\tilde{\beta}_{k+1}^2 = \frac{1+\rho^2}{1-\rho^2}\sum_{i=0}^{k}\beta_i^2(2\sigma_{g,1}^2 + 6L_{g,1}^2\sigma_x^2\tilde{\alpha}_i^2 + 3\delta^2)\left(\frac{3+\rho^2}{4}\right)^{k-i},\ \tilde{\alpha}_0 = \tilde{\beta}_0 = 0.
    \end{align*}
    If $\beta_k$ satisfy
    \begin{equation}\label{ineq: beta_condition}
        \frac{(1+\rho^2)}{2} + \beta_k^2\frac{1+\rho^2}{1-\rho^2}\cdot 6L_{g,1}^2 \leq \frac{3+\rho^2}{4}<1,
    \end{equation}
    then in Algorithm \ref{algo:DSBO}, for any $k\geq 0$ and $0\leq t\leq T-1$ we have
    \begin{equation}\label{ineq: XY_consensus}
        \begin{aligned}
            &\E\left[\|U_k\|^2\right]\leq n\sigma_u^2,\ \E\left[\|X_k - \bar{x}_k\bfonet\|^2\right]\leq n\sigma_x^2\tilde{\alpha}_k^2, \\ 
            &\frac{1}{n}\E\left[\|Y_{k}^{(t)} - \bar{y}_{k}^{(t)}\bfonet\|^2\right]\leq \left[\left(\frac{3+\rho^2}{4}\right)^{t}T - t\left(\frac{3+\rho^2}{4}\right)\right]\tilde{\beta}_k^2 + t\tilde{\beta}_{k+1}^2.
        \end{aligned}
    \end{equation}
\end{lemma}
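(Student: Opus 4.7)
I would establish the three bounds in the stated order, each exploiting the by-now-standard pattern for decentralized updates driven by noisy local gradients: after a gossip step the consensus error contracts by $\rho^2$ (Assumption~\ref{assump: W}), so a Young inequality with parameter $\frac{1-\rho^2}{2\rho^2}$ produces a one-step recursion of the form $a_{k+1}\leq \frac{1+\rho^2}{2}a_k+(\text{driving term})$, and Lemma~\ref{lem: useful_ineq} unrolls it. For $\E\|U_k\|^2$, applying Young's inequality to the definition of $u_{i,k}$ in Algorithm~\ref{algo: Hypergrad} yields
\[
    \|u_{i,k}\|^2 \leq 2\|\nabla_x f_i(x_{i,k},y_{i,k}^{(T)};\phi_{i,0})\|^2 + 2\|\nabla_{xy}^2 g_i(x_{i,k},y_{i,k}^{(T)};\xi_{i,0})\|^2\,\|z_{i,N}^{(k)}\|^2.
\]
Assumptions~\ref{assump: lip_convexity} and \ref{assump: stoc_derivatives} bound the first factor in second moment by $L_{f,0}^2+\sigma_f^2$ and the Hessian factor by $L_{g,1}^2+\sigma_{g,2}^2$; independence of the fresh samples $(\phi_{i,0},\xi_{i,0})$ from the HIGP output together with \eqref{ineq: z_bdd_2_m} gives $\E\|z_{i,N}^{(k)}\|^2\leq \sigma_z^2$, so summing over $i$ delivers the first claim.

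\textbf{Consensus of $X_k$.} Double stochasticity of $W$ yields $\bar x_{k+1}=\bar x_k-\alpha_k\bar r_k$, hence
\[
    X_{k+1}-\bar x_{k+1}\bfonet = (X_k-\bar x_k\bfonet)W - \alpha_k(R_k-\bar r_k\bfonet),
\]
and the Young-type step gives
\[
    \E\|X_{k+1}-\bar x_{k+1}\bfonet\|^2 \leq \tfrac{1+\rho^2}{2}\,\E\|X_k-\bar x_k\bfonet\|^2 + \tfrac{1+\rho^2}{1-\rho^2}\alpha_k^2\,\E\|R_k-\bar r_k\bfonet\|^2.
\]
For the driving term I would argue $\E\|R_k-\bar r_k\bfonet\|^2\leq \E\|R_k\|^2\leq n\sigma_u^2$: centering only shrinks the Frobenius norm, and an induction on the moving-average recursion $R_{k+1}=(1-\alpha_k)R_k+\alpha_k U_k$ using convexity of $\|\cdot\|^2$, the first bound, and $R_0=0$ produces $\E\|R_k\|^2\leq n\sigma_u^2$. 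Lemma~\ref{lem: useful_ineq} with $\delta=\tfrac{1+\rho^2}{2}$ and $X_0=0$ then produces exactly $n\sigma_x^2\tilde\alpha_k^2$.

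\textbf{Consensus of $Y_k^{(t)}$ --- the main obstacle.} The analogous one-step recursion, now with $V_k^{(t)}$ in place of $R_k$, reduces the problem to bounding $\E\|V_k^{(t)}-\bar v_k^{(t)}\bfonet\|^2$. Writing $G_i(x,y):=\nabla_y g_i(x,y)$ and $\eta_i^{(t)}:=v_{i,k}^{(t)}-G_i(x_{i,k},y_{i,k}^{(t)})$, I decompose
\[
    v_{i,k}^{(t)}-\bar v_k^{(t)} = (\eta_i^{(t)}-\bar\eta^{(t)}) + \bigl(G_i(x_{i,k},y_{i,k}^{(t)})-G_i(\bar x_k,\bar y_k^{(t)})\bigr) - \overline{G_\cdot(x_{\cdot,k},y_{\cdot,k}^{(t)})-G_\cdot(\bar x_k,\bar y_k^{(t)})} + \bigl(G_i(\bar x_k,\bar y_k^{(t)})-\tfrac{1}{n}\textstyle\sum_l G_l(\bar x_k,\bar y_k^{(t)})\bigr),
\]
which is crucial because it lets heterogeneity enter through Assumption~\ref{assump: similarity_of_g} without ever demanding a bound on $\|\nabla_y g\|$. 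Squaring and summing over $i$, then using Lipschitzness, variance (Assumption~\ref{assump: stoc_derivatives}), heterogeneity, and the already-established Step~2 bound on the $X$ consensus error produces
\[
    \E\|V_k^{(t)}-\bar v_k^{(t)}\bfonet\|^2 \lesssim n\sigma_{g,1}^2 + n L_{g,1}^2\sigma_x^2\tilde\alpha_k^2 + n\delta^2 + L_{g,1}^2\,\E\|Y_k^{(t)}-\bar y_k^{(t)}\bfonet\|^2.
\]
Plugging this back and invoking condition~\eqref{ineq: beta_condition} to absorb the $\|Y_k^{(t)}-\bar y_k^{(t)}\bfonet\|^2$ term into the contraction yields
\[
    \E\|Y_k^{(t+1)}-\bar y_k^{(t+1)}\bfonet\|^2 \leq \tfrac{3+\rho^2}{4}\,\E\|Y_k^{(t)}-\bar y_k^{(t)}\bfonet\|^2 + n\,b_k,
\]
where $b_k:=\beta_k^2\tfrac{1+\rho^2}{1-\rho^2}(2\sigma_{g,1}^2+6L_{g,1}^2\sigma_x^2\tilde\alpha_k^2+3\delta^2)$ matches the increment in $\tilde\beta_{k+1}^2$.

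The hard part is packaging this single recursion --- which runs simultaneously over the inner index $t$ (with $T$ steps per outer round) and the outer index $k$ (with the reset $y_{i,k}^{(0)}=y_{i,k-1}^{(T)}$) --- into the precise linear-in-$t$ form $\bigl[(\tfrac{3+\rho^2}{4})^tT-t\tfrac{3+\rho^2}{4}\bigr]\tilde\beta_k^2+t\tilde\beta_{k+1}^2$ claimed in the lemma. I would induct on $k$: the base $k=0$ is trivial because $y_{i,0}^{(0)}=0$ and $\tilde\beta_0=0$. The inductive step unrolls the inner recursion from $t=0$, and then exploits the key identity $\tilde\beta_{k+1}^2 = b_k+\tfrac{3+\rho^2}{4}\tilde\beta_k^2$ --- which is just the defining recursion of $\tilde\beta_{k+1}^2$ --- to rewrite the accumulated $b_k$ contributions as a telescoping combination of $\tilde\beta_k^2$ and $\tilde\beta_{k+1}^2$; this produces the linear-in-$t$ term and glues the transition across outer iterations. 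Beyond this bookkeeping, the substantive difficulty is keeping the heterogeneity term under control with only Assumption~\ref{assump: similarity_of_g}, which is why the decomposition above always subtracts the drift at $(\bar x_k,\bar y_k^{(t)})$ rather than at $0$.
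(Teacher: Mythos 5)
Your proposal is correct and follows essentially the same route as the paper's proof: the same Young/spectral-gap one-step contractions unrolled via Lemma~\ref{lem: useful_ineq}, the same second-moment bound on $U_k$ via \eqref{ineq: z_bdd_2_m}, the same four-term decomposition of $v_{i,k}^{(t)}-\bar v_k^{(t)}$ isolating noise, Lipschitz drift, and the $\delta$-heterogeneity term, and the same identity $\tilde\beta_{k+1}^2=b_k+\tfrac{3+\rho^2}{4}\tilde\beta_k^2$ combined with the warm start $Y_{k+1}^{(0)}=Y_k^{(T)}$ to obtain the linear-in-$t$ form. The only cosmetic difference is that you control $\E\|R_k-\bar r_k\bfonet\|^2$ by inducting with convexity of $\|\cdot\|^2$ on the moving-average recursion, whereas the paper bounds $\|R_k-\bar r_k\bfonet\|$ pointwise by a maximum over $\|U_i(I-\bfone\bfonet/n)\|$; both yield the same $n\sigma_u^2$ bound.
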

\begin{proof}[Proof of Lemma~\ref{lem: XY_cons}]
    Note that the inner and outer loop updates satisfy
    \begin{align*}
        &\bar{x}_{k+1} = \bar{x}_k - \alpha_k \bar{r}_k,\ X_{k+1} - \bar{x}_{k+1}\bfonet = X_kW - \bar{x}_k\bfonet - \alpha_k(R_k - \bar{r}_k\bfonet), \\
        &\bar{y}_k^{(t+1)} = \bar{y}_k^{(t)} - \beta_k\bar{v}_k^{(t)},\ Y_{k}^{(t+1)} - \bar{y}_k^{(t+1)} = Y_k^{(t)}W - \bar{y}_k^{(t)}\bfonet - \beta_k(V_k^{(t)} - \bar{v}_k^{(t)}\bfonet),
    \end{align*}
    which gives
    \begin{align}
        &\|X_{k+1} - \bar{x}_{k+1}\bfonet\|^2 \leq \frac{(1+\rho^2)}{2}\|X_k - \bar{x}_k\bfonet\|^2 + \alpha_k^2\frac{1+\rho^2}{1-\rho^2}\|R_k - \bar{r}_k\bfonet\|^2, \label{ineq: X_cons_recursion}\\
        &\|Y_{k}^{(t+1)} - \bar{y}_{k}^{(t+1)}\bfonet\|^2 \leq \frac{(1+\rho^2)}{2}\|Y_k^{(t)} - \bar{y}_k^{(t)}\bfonet\|^2 + \beta_k^2\frac{1+\rho^2}{1-\rho^2}\|V_k^{(t)} - \bar{v}_k^{(t)}\bfonet\|^2. \label{ineq: Y_cons_recursion}
    \end{align}
    The inequalities hold similarly as the inequality in \eqref{ineq: consensus_ineq}. Notice that we have
    \begin{align*}
        &\|R_k - \bar{r}_k\bfonet\| = \|R_k\left(I - \frac{\bfone\bfonet}{n}\right)\| = \|\left[(1-\alpha_k)R_{k-1} + \alpha_kU_{k-1}\right]\left(I-\frac{\bfone\bfonet}{n}\right)\|\\
        \leq &\max\left(\|R_{k-1}\left(I-\frac{\bfone\bfonet}{n}\right)\|,\ \|U_{k-1}\left(I-\frac{\bfone\bfonet}{n}\right)\|\right)\leq \max_{0\leq i\leq k-1}\left(\|U_{i}\left(I-\frac{\bfone\bfonet}{n}\right)\|\right).
    \end{align*}
    The second inequality holds by repeating the first inequality multiple times. For each $\|U_k - \bar{u}_k\bfonet\|$ we have
    \begin{align*}
        &\E\left[\|U_k - \bar{u}_k\bfonet\|^2\right] = \E\left[\|U_k\left(I - \frac{\bfone\bfonet}{n}\right)\|^2\right]\leq \E\left[\|U_k\|^2\right] = \sum_{i=1}^{n}\E\left[\|u_{i,k}\|^2\right] \\
        \leq &2\sum_{i=1}^{n}\left(\E\left[\|\nabla_xf_i(x_{i,k}, y_{i,k}^{(T)};\phi_{i,0})\|^2\right] + \E\left[\|\nabla_{xy}^2 g_i(x_{i,k},y_{i,k}^{(T)};\xi_{i,0} )z_{i,N}^{(k)}\|^2\right]\right)\\
        \leq &2\sum_{i=1}^{n}\left(L_{f,0}^2 + \sigma_f^2 + (L_{g,1}^2 + \sigma_{g,2}^2)\E\left[\|z_{i,N}^{(k)}\|^2\right]\right) \leq 2n(L_{f,0}^2 + \sigma_f^2) + 2n(L_{g,1}^2 + \sigma_{g,2}^2)\sigma_z^2 = n\sigma_u^2.
    \end{align*}
    The fourth inequality uses \eqref{ineq: z_bdd_2_m}. Using the above two inequaities in \eqref{ineq: X_cons_recursion} we know
    \[
        \|X_{k+1} - \bar{x}_{k+1}\bfonet\|^2 \leq \frac{(1+\rho^2)}{2}\|X_k - \bar{x}_k\bfonet\|^2 + n\alpha_k^2\sigma_x^2.
    \]
    Using Lemma \ref{lem: useful_ineq} and $X_0 = 0$, we can obtain the first two results of \eqref{ineq: XY_consensus}. To analyze $\|V_k^{(t)} - \bar{v}_k^{(t)}\bfonet\|$, we first notice that 
    \begin{align*}
        v_{i,k}^{(t)} - \bar{v}_k^{(t)} &= v_{i,k}^{(t)} - \nabla_y g_i(x_{i,k}, y_{i,k}^{(t)}) - (\bar{v}_k^{(t)} - \frac{1}{n}\sum_{l=1}^{n}\nabla_yg_l(x_{l,k}, y_{l,k}^{(t)})) + \nabla_y g_i(x_{i,k}, y_{i,k}^{(t)}) - \nabla_yg_i(\bar{x}_k, \bar{y}_k^{(t)}) \\
        &- \frac{1}{n}\sum_{l=1}^{n}(\nabla_yg_l(x_{l,k}, y_{l,k}^{(t)}) - \nabla_yg_l(\bar{x}_k, \bar{y}_k^{(t)}))
        +\nabla_yg_i(\bar{x}_k, \bar{y}_k^{(t)}) - \frac{1}{n}\sum_{l=1}^{n}\nabla_yg_l(\bar{x}_k, \bar{y}_k^{(t)}).
    \end{align*}
    Hence we know 
    \begin{align*}
        &\E\left[\|V_k^{(t)} - \bar{v}_k^{(t)}\bfonet\|^2\right] = \sum_{i=1}^{n}\E\left[\|v_{i,k}^{(t)} - \bar{v}_k^{(t)}\|^2\right] \\
        \leq & (n+1)\sigma_{g,1}^2 + 3\sum_{i=1}^{n}\E\left[L_{g,1}^2(\|x_{i,k} - \bar{x}_k\|^2 + \|y_{i,k}^{(t)} - \bar{y}_k^{(t)}\|^2) + \frac{L_{g,1}^2}{n}\sum_{l=1}^{n}(\|x_{l,k} - \bar{x}_k\|^2 + \|y_{l,k}^{(t)}-\bar{y}_k^{(t)}\|^2) + \delta^2\right]\\
        = &(n+1)\sigma_{g,1}^2 + 6L_{g,1}^2\E\left[\|X_k - \bar{x}_k\bfonet\|^2 + \|Y_{k} - \bar{y}_k^{(t)}\bfonet\|^2\right] + 3n\delta^2 \\
        \leq &6L_{g,1}^2\E\left[\|Y_k^{(t)} - \bar{y}_k^{(t)}\bfonet\|^2\right] + 2n\sigma_{g,1}^2 + 6nL_{g,1}^2\sigma_x^2\tilde{\alpha}_k^2 + 3n\delta^2,
    \end{align*}
    where the second inequality uses the first result of \eqref{ineq: XY_consensus}. The above inequality together with \eqref{ineq: Y_cons_recursion} imply
    \begin{equation}\label{ineq: Y_kt_cons_recursion}
        \begin{aligned}
            &\frac{1}{n}\E\left[\|Y_{k}^{(t+1)} - \bar{y}_{k}^{(t+1)}\bfonet\|^2\right] \\
        \leq &\left[\frac{(1+\rho^2)}{2} + \beta_k^2\frac{1+\rho^2}{1-\rho^2}\cdot 6L_{g,1}^2 \right]\cdot\frac{1}{n}\E\left[\|Y_k^{(t)} - \bar{y}_k^{(t)}\bfonet\|^2\right] + \beta_k^2\frac{1+\rho^2}{1-\rho^2}(2\sigma_{g,1}^2 + 6L_{g,1}^2\sigma_x^2\tilde{\alpha}_k^2 + 3\delta^2) \\
        \leq &\left(\frac{3+\rho^2}{4}\right)^{t+1}\cdot\frac{1}{n}\E\left[\|Y_k^{(0)} - \bar{y}_k^{(0)}\bfonet\|^2\right] + \beta_k^2\frac{1+\rho^2}{1-\rho^2}(2\sigma_{g,1}^2 + 6L_{g,1}^2\sigma_x^2\tilde{\alpha}_k^2 + 3\delta^2)\sum_{l=0}^{t}\left(\frac{3+\rho^2}{4}\right)^i \\
        \leq &\left(\frac{3+\rho^2}{4}\right)^{t+1}\cdot\frac{1}{n}\E\left[\|Y_k^{(0)} - \bar{y}_k^{(0)}\bfonet\|^2\right] + (t+1)\beta_k^2\frac{1+\rho^2}{1-\rho^2}(2\sigma_{g,1}^2 + 6L_{g,1}^2\sigma_x^2\tilde{\alpha}_k^2 + 3\delta^2),
        \end{aligned}
    \end{equation}
    where the second inequality uses Lemma \ref{lem: useful_ineq} and \eqref{ineq: beta_condition}. Notice that we use warm-start strategy (i.e., $Y_{k+1}^{(0)} = Y_{k}^{(T)}$), hence we know
    \begin{align*}
        &\frac{1}{n}\E\left[\|Y_{k+1}^{(0)} - \bar{y}_{k+1}^{(0)}\bfonet\|^2\right] = \frac{1}{n}\E\left[\|Y_{k}^{(T)} - \bar{y}_{k}^{(T)}\bfonet\|^2\right]\\
            \leq &\left(\frac{3+\rho^2}{4}\right)^{T}\cdot\frac{1}{n}\E\left[\|Y_k^{(0)} - \bar{y}_k^{(0)}\bfonet\|^2\right] + T\beta_k^2\frac{1+\rho^2}{1-\rho^2}(2\sigma_{g,1}^2 + 6L_{g,1}^2\sigma_x^2\tilde{\alpha}_k^2 + 3\delta^2) \\
            \leq & T\frac{1+\rho^2}{1-\rho^2}\sum_{i=0}^{k}\beta_i^2(2\sigma_{g,1}^2 + 6L_{g,1}^2\sigma_x^2\tilde{\alpha}_i^2 + 3\delta^2)\left(\frac{3+\rho^2}{4}\right)^{k-i} = T\tilde{\beta}_{k+1}^2,
    \end{align*}
    where the second inequality uses Lemma \ref{lem: useful_ineq}. Using the above estimation in \eqref{ineq: Y_kt_cons_recursion}, we know
    \begin{align*}
        &\frac{1}{n}\E\left[\|Y_{k}^{(t+1)} - \bar{y}_{k}^{(t+1)}\bfonet\|^2\right]\\
        \leq &\left(\frac{3+\rho^2}{4}\right)^{t+1}\cdot\frac{1}{n}\E\left[\|Y_k^{(0)} - \bar{y}_k^{(0)}\bfonet\|^2\right] + (t+1)\beta_k^2\frac{1+\rho^2}{1-\rho^2}(2\sigma_{g,1}^2 + 6L_{g,1}^2\sigma_x^2\tilde{\alpha}_k^2 + 3\delta^2) \\
        \leq & \left(\frac{3+\rho^2}{4}\right)^{t+1}T\tilde{\beta}_k^2 + (t+1)\left(\tilde{\beta}_{k+1}^2 - \left(\frac{3+\rho^2}{4}\right)\tilde{\beta}_k^2\right),
    \end{align*}
    and thus the proof is complete by rearranging the terms.
\end{proof}
Now we are ready to analyze the convergence of the inner loop of Algorithm \ref{algo:DSBO}.

\begin{lemma}\label{lem: y_error}
    Suppose Assumptions \ref{assump: lip_convexity} and \ref{assump: stoc_derivatives} hold. For any $0\leq t\leq T-1$ define
    \begin{equation}\label{eq: C_kt_const}
        C_{k,t+1} = \sum_{l=0}^{t}\left[\left(\frac{\beta_k}{\mu_g} + \beta_k^2\right)L_{g,1}^2\left(\sigma_x^2\tilde{\alpha}_k^2 + \left[\left(\frac{3+\rho^2}{4}\right)^{l}T - l\left(\frac{3+\rho^2}{4}\right)\right]\tilde{\beta}_k^2 + l\tilde{\beta}_{k+1}^2\right) + \frac{\beta_k^2\sigma_{g,1}^2}{n}\right].
    \end{equation}
    If $T\geq 1$ and $0<\beta_k\leq \min\{1, \frac{1}{\mu_g}\}$, then in Algorithm \ref{algo:DSBO}, we have
    \begin{equation}\label{ineq: bar_y_sum}
        \begin{aligned}
            \frac{\mu_g}{2}\sum_{k=1}^{K}\beta_k\E\left[\|\bar{y}_k^{(0)} - y_{k-1}^*\|^2\right]\leq \E\left[\|\bar{y}_{1}^{(0)}-y_{0}^*\|^2\right] + L_{y^*}^2\sum_{k=1}^{K}\left(\frac{2\alpha_{k-1}^2}{\beta_k\mu_g}+\alpha_{k-1}^2\right)\E\left[\|\bar{r}_{k-1}\|^2\right] + \sum_{k=1}^{K}C_{k,T},
        \end{aligned}
    \end{equation}
    where $y_k^* = y^*(\bar{x}_k) = \argmin_y \sum_{i=1}^{n}g_i(\bar{x}_k, y)$
\end{lemma}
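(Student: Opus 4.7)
The plan is to set up a contractive recursion on $\E\|\bar y_{k+1}^{(0)} - y_k^*\|^2$ with per-step error $C_{k,T}$ plus a drift term from the change of target caused by the outer $x$-update, and then telescope over $k$. Throughout I would use the warm start $\bar y_{k+1}^{(0)} = \bar y_k^{(T)}$ and the Lipschitz continuity $\|y^*(x)-y^*(x')\| \leq L_{y^*}\|x-x'\|$ from Lemma~\ref{lem: smoothness}, together with the consensus bounds on $X_k$ and $Y_k^{(t)}$ from Lemma~\ref{lem: XY_cons}.

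First I would analyze a single inner step on the strongly convex function $g(\bar x_k,\cdot)$ by decomposing
\[
\bar y_k^{(t+1)} - y_k^* = \bigl(\bar y_k^{(t)} - \beta_k\nabla_y g(\bar x_k, \bar y_k^{(t)}) - y_k^*\bigr) + \beta_k e^{\mathrm{cons}}_t + \beta_k e^{\mathrm{noise}}_t,
\]
where $e^{\mathrm{cons}}_t = \nabla_y g(\bar x_k,\bar y_k^{(t)}) - \tfrac{1}{n}\sum_i \nabla_y g_i(x_{i,k},y_{i,k}^{(t)})$ and $e^{\mathrm{noise}}_t = \tfrac{1}{n}\sum_i \nabla_y g_i(x_{i,k},y_{i,k}^{(t)}) - \bar v_k^{(t)}$. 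Conditionally on the history the noise has mean zero and variance at most $\sigma_{g,1}^2/n$ by independence across agents (Assumption~\ref{assump: stoc_derivatives}), so after taking conditional expectation the cross terms vanish and it contributes a clean $\beta_k^2\sigma_{g,1}^2/n$. Lemma~\ref{lem: gd_decrease} gives the deterministic contraction $\|\bar y_k^{(t)} - \beta_k\nabla_y g(\bar x_k,\bar y_k^{(t)}) - y_k^*\| \leq (1-\beta_k\mu_g)\|\bar y_k^{(t)} - y_k^*\|$, and $L_{g,1}$-smoothness bounds $\|e^{\mathrm{cons}}_t\|^2 \leq (L_{g,1}^2/n)\bigl(\|X_k - \bar x_k\bfonet\|^2 + \|Y_k^{(t)} - \bar y_k^{(t)}\bfonet\|^2\bigr)$. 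A Young's inequality with $\eta' = \beta_k\mu_g$, for which $(1+\eta')(1-\beta_k\mu_g)^2 \leq 1-\beta_k\mu_g$ and $(1+1/\eta')\beta_k^2 = \beta_k/\mu_g + \beta_k^2$, gives a one-step bound; iterating over $t=0,\ldots,T-1$, bounding each multiplier $(1-\beta_k\mu_g)^{T-1-t}$ by $1$, and substituting the consensus estimates of Lemma~\ref{lem: XY_cons} yields exactly
\[
\E\|\bar y_k^{(T)} - y_k^*\|^2 \leq (1-\beta_k\mu_g)^T\,\E\|\bar y_k^{(0)} - y_k^*\|^2 + C_{k,T}.
\]

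Next I would switch the reference point from $y_k^*$ to $y_{k-1}^*$ via a second Young's inequality with $\eta = \beta_k\mu_g/\bigl(2(1-\beta_k\mu_g)\bigr)$, together with $\|y_{k-1}^* - y_k^*\| \leq L_{y^*}\alpha_{k-1}\|\bar r_{k-1}\|$. Using $(1-\beta_k\mu_g)^T \leq 1-\beta_k\mu_g$ (the sole role of $T\geq 1$), the coefficient on $\E\|\bar y_k^{(0)} - y_{k-1}^*\|^2$ becomes $(1-\beta_k\mu_g)(1+\eta) = 1-\beta_k\mu_g/2$, and the coefficient on $L_{y^*}^2\alpha_{k-1}^2\E\|\bar r_{k-1}\|^2$ becomes $(1-\beta_k\mu_g)(1+1/\eta) = 2/(\beta_k\mu_g) - 3 + \beta_k\mu_g \leq 2/(\beta_k\mu_g)+1$ (using $\beta_k\mu_g\leq 1$). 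Combining with the warm start $\bar y_{k+1}^{(0)} = \bar y_k^{(T)}$ produces
\[
\E\|\bar y_{k+1}^{(0)} - y_k^*\|^2 \leq \bigl(1-\tfrac{\beta_k\mu_g}{2}\bigr)\E\|\bar y_k^{(0)} - y_{k-1}^*\|^2 + L_{y^*}^2\bigl(\tfrac{2\alpha_{k-1}^2}{\beta_k\mu_g} + \alpha_{k-1}^2\bigr)\E\|\bar r_{k-1}\|^2 + C_{k,T},
\]
and isolating $\tfrac{\beta_k\mu_g}{2}\E\|\bar y_k^{(0)} - y_{k-1}^*\|^2$ on the left and summing over $k=1,\ldots,K$ telescopes the differences $\E\|\bar y_k^{(0)} - y_{k-1}^*\|^2 - \E\|\bar y_{k+1}^{(0)} - y_k^*\|^2$, leaving only the boundary term $\E\|\bar y_1^{(0)} - y_0^*\|^2$ and the stated $\alpha_{k-1}$ and $C_{k,T}$ contributions, which is precisely \eqref{ineq: bar_y_sum}.

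Beyond the standard strongly convex gradient descent step and the consensus bounds already in Lemma~\ref{lem: XY_cons}, the only delicate bookkeeping is the coordinated choice of Young's parameters: $\eta' = \beta_k\mu_g$ generates the specific coefficient $\beta_k/\mu_g + \beta_k^2$ that appears inside $C_{k,T}$, and $\eta = \beta_k\mu_g/(2(1-\beta_k\mu_g))$ generates the specific coefficient $2/(\beta_k\mu_g) + 1$ for the target-shift term. The hypothesis $\beta_k\leq \min\{1,1/\mu_g\}$ is used both to keep $\beta_k\mu_g\leq 1$ so these inequalities hold with the stated constants and to ensure that the step size meets the condition of Lemma~\ref{lem: gd_decrease}.
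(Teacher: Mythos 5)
Your proposal is correct and follows essentially the same route as the paper's proof: the same one-step decomposition into contraction, consensus, and zero-mean noise terms with Young's parameter $\beta_k\mu_g$ producing the coefficient $\beta_k/\mu_g+\beta_k^2$ inside $C_{k,T}$, the same reference-point switch via $\|y_{k-1}^*-y_k^*\|\leq L_{y^*}\alpha_{k-1}\|\bar r_{k-1}\|$ yielding the factor $2/(\beta_k\mu_g)+1$, and the same telescoping. The only cosmetic difference is that you absorb $(1-\beta_k\mu_g)^T\leq 1-\beta_k\mu_g$ before choosing the second Young's parameter, whereas the paper keeps $(1+\beta_k\mu_g/2)(1-\beta_k\mu_g)^T\leq 1-\beta_k\mu_g/2$; the resulting constants coincide.
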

\begin{proof}[Proof of Lemma~\ref{lem: y_error}]
    For any $k\geq 0,\ 1\leq t\leq T-1$, define $$\G_t^{(k)} = \sigma\left(\bigcup_{i=1}^{n}\{y_{i,0}^{(T)}, ..., y_{i,k-1}^{(T)}, y_{i,k}^{(t)}, x_{i,0},...,x_{i,k}, r_{i,0},...,r_{i,k}\}\right).$$ 
    We know
    \begin{align}\label{ineq: inner_1_step}
        \begin{aligned}
            &\E\left[\|\bar{y}_{k}^{(t+1)} - y_k^*\|^2|\G_t\right] \\
        = &\E\left[\|\bar{y}_k^{(t)} - \beta_k\nabla_yg(\bar{x}_k, \bar{y}_k^{(t)}) - y_k^* - \beta_k\left(\bar{v}_k^{(t)} - \E\left[\bar{v}_k^{(t)}|\G_t\right]\right) - \beta_k\left(\E\left[\bar{v}_k^{(t)}|\G_t\right] - \nabla_yg(\bar{x}_k, \bar{y}_k^{(t)})\right)\|^2|\G_t\right] \\
        = & \E\left[\|\bar{y}_k^{(t)} - \beta_k\nabla_yg(\bar{x}_k, \bar{y}_k^{(t)}) - y_k^* - \beta_k\left(\E\left[\bar{v}_k^{(t)}|\G_t\right] - \nabla_yg(\bar{x}_k, \bar{y}_k^{(t)})\right)\|^2|\G_t\right] + \frac{\beta_k^2\sigma_{g,1}^2}{n}\\
        \leq &(1+\beta_k\mu_g)\|\bar{y}_k^{(t)} - \beta_k\nabla_yg(\bar{x}_k, \bar{y}_k^{(t)}) - y_k^*\|^2 + \left(1+\frac{1}{\beta_k\mu_g}\right)\beta_k^2\E\left[\|\E\left[\bar{v}_k^{(t)}|\G_t\right] - \nabla_yg(\bar{x}_k, \bar{y}_k^{(t)})\|^2|\G_t\right] +\frac{\beta_k^2\sigma_{g,1}^2}{n}\\
        \leq & (1+\beta_k\mu_g)(1 - \beta_k\mu_g)^2\|\bar{y}_k^{(t)}-y_k^*\|^2 + \left(\frac{\beta_k}{\mu_g} + \beta_k^2\right)\|\frac{1}{n}\sum_{i=1}^{n}\left(\nabla_yg_i(x_{i,k},y_{i,k}^{(t)}) - \nabla_yg_i(\bar{x}_k,\bar{y}_k^{(t)})\right)\|^2 + \frac{\beta_k^2\sigma_{g,1}^2}{n} \\
        \leq & (1 - \beta_k\mu_g)\|\bar{y}_k^{(t)}-y_k^*\|^2 + \frac{\left(\frac{\beta_k}{\mu_g} + \beta_k^2\right)L_{g,1}^2}{n}\left(\|X_k - \bar{x}_k\bfonet\|^2 + \|Y_k^{(t)} - \bar{y}_k^{(t)}\bfonet\|^2\right) + \frac{\beta_k^2\sigma_{g,1}^2}{n},
        \end{aligned}
    \end{align}
    where the second equality holds since $\bar{v}_k^{(t)} - \E\left[\bar{v}_k^{(t)}|\G_t\right]$ has expectation $0$ and 
    \[
        \E\left[\|\bar{v}_k^{(t)} - \E\left[\bar{v}_k^{(t)}|\G_t\right]\|^2|\G_t\right] = \E\left[\|\frac{1}{n}\sum_{i=1}^{n}\left(v_{i,k}^{(t)} - \E\left[v_{i,k}^{(t)}|\G_t\right]\right)\|^2|\G_t\right] \leq \frac{\sigma_{g,1}^2}{n},
    \]
    due to independence, the second inequality holds due to Lemma \ref{lem: gd_decrease} and $\beta_k\leq 1$, and the third inequality holds due to Lipschitz continuity of $\nabla_y g$. Taking expectation on both sides and using \eqref{ineq: XY_consensus} we know
    \begin{align*}
        &\E\left[\|\bar{y}_{k}^{(t+1)} - y_k^*\|^2\right] \\
        \leq &(1 - \beta_k\mu_g)\E\left[\|\bar{y}_k^{(t)}-y_k^*\|^2\right] + \left(\frac{\beta_k}{\mu_g} + \beta_k^2\right)L_{g,1}^2\left(\sigma_x^2\tilde{\alpha}_k^2 + \left[\left(\frac{3+\rho^2}{4}\right)^{t}T - t\left(\frac{3+\rho^2}{4}\right)\right]\tilde{\beta}_k^2 + t\tilde{\beta}_{k+1}^2\right) + \frac{\beta_k^2\sigma_{g,1}^2}{n} \\
        \leq &(1 - \beta_k\mu_g)^{t+1}\E\left[\|\bar{y}_k^{(0)}-y_k^*\|^2\right] + C_{k,t+1},
    \end{align*}
    where the second inequality uses Lemma \ref{lem: useful_ineq}. Observe that we also have
    \begin{equation}
        \begin{aligned}
            &\E\left[\|\bar{y}_{k+1}^{(0)}-y_{k}^*\|^2\right] = \E\left[\|\bar{y}_{k}^{(T)}-y_{k}^*\|^2\right] \leq  (1-\beta_k\mu_g)^{T}\E\left[\|\bar{y}_k^{(0)}-y_k^*\|^2\right] + C_{k,T} \\
            \leq & (1-\beta_k\mu_g)^{T}\E\left[\left(1+\frac{\beta_k\mu_g}{2}\right)\|\bar{y}_k^{(0)}-y_{k-1}^*\|^2 + \left(1+\frac{2}{\beta_k\mu_g}\right)\|y_{k-1}^* - y_k^*\|^2\right] + C_{k,T}\\
            \leq & \left(1+\frac{\beta_k\mu_g}{2}\right)(1-\beta_k\mu_g)^{T}\E\left[\|\bar{y}_k^{(0)}-y_{k-1}^*\|^2\right] + \left(\frac{2\alpha_{k-1}^2}{\beta_k\mu_g}+\alpha_{k-1}^2\right)L_{y^*}^2\E\left[\|\bar{r}_{k-1}\|^2\right] + C_{k,T}\\
            \leq & \left(1-\frac{\beta_k\mu_g}{2}\right)\E\left[\|\bar{y}_k^{(0)}-y_{k-1}^*\|^2\right] + \left(\frac{2\alpha_{k-1}^2}{\beta_k\mu_g}+\alpha_{k-1}^2\right)L_{y^*}^2\E\left[\|\bar{r}_{k-1}\|^2\right] + C_{k,T},
        \end{aligned}
    \end{equation}
    where the third inequality holds since $(1+\frac{a}{2})(1-a)^T\leq (1-\frac{a}{2})$ for any $a>0$ and $T\geq 1$, and $y^*(x)$ is $L_{y^*}$-smooth. This implies
    \[
        \frac{\beta_k\mu_g}{2}\E\left[\|\bar{y}_k^{(0)} - y_{k-1}^*\|^2\right]\leq \E\left[\|\bar{y}_k^{(0)}-y_{k-1}^*\|^2\right] - \E\left[\|\bar{y}_{k+1}^{(0)}-y_{k}^*\|^2\right] +  \left(\frac{2\alpha_{k-1}^2}{\beta_k\mu_g}+\alpha_{k-1}^2\right)L_{y^*}^2\E\left[\|\bar{r}_{k-1}\|^2\right] + C_{k,T}.
    \]
    Taking summation on both sides, we have
    \begin{align*}
        \frac{\mu_g}{2}\sum_{k=1}^{K}\beta_k\E\left[\|\bar{y}_k^{(0)} - y_{k-1}^*\|^2\right]\leq \E\left[\|\bar{y}_{1}^{(0)}-y_{0}^*\|^2\right] + L_{y^*}^2\sum_{k=1}^{K}\left(\frac{2\alpha_{k-1}^2}{\beta_k\mu_g}+\alpha_{k-1}^2\right)\E\left[\|\bar{r}_{k-1}\|^2\right] + \sum_{k=1}^{K}C_{k,T}.
    \end{align*}

\end{proof}

\begin{lemma}\label{lem: higp_main}
    Suppose Assumptions \ref{assump: lip_convexity}, \ref{assump: W}, \ref{assump: similarity_of_g}, and \ref{assump: stoc_derivatives} hold. In Algorithm \ref{algo: HIGP_oracle} define
    \begin{align*}
        &H^{(k)} := \frac{1}{n}\sum_{i=1}^{n}\nabla_y^2g_i(\bar{x}_k,y_k^*),\ b^{(k)} := \frac{1}{n}\sum_{i=1}^{n}\nabla_y f_i(\bar{x}_k, y_k^*), \\
        &z_{*}^{(k)} := \left(H^{(k)}\right)^{-1}\cdot b^{(k)} = \left(\sum_{i=1}^{n}\nabla_y^2g_i(\bar{x}_k,y_k^*)\right)^{-1}\left(\sum_{i=1}^{n}\nabla_y f_i(\bar{x}_k, y_k^*)\right),
    \end{align*}
    If $\gamma$ satisfies \eqref{ineq: gamma_for_quadratics}, then we have
    \begin{align}
        &\E\left[\|\E\left[\bar{z}_{t}^{(k)}|\F_k\right] - z_{*}^{(k)}\|^2\right] \notag\\
        \leq &(1-\gamma\mu_g)^{N}\cdot\frac{L_{f,0}^2}{\mu_g^2} + 5\left(\frac{1}{\mu_g^2} + \frac{\gamma}{\mu_g}\right)\left(L_{g,2}^2\sigma_z^2 + L_{f,1}^2\right)\left(\E\left[\|\bar{y}_{k+1}^{(0)} - y_k^*\|^2\right] + \sigma_x^2\tilde{\alpha}_k^2 + T\tilde{\beta}_{k+1}^2\right) \notag \\
            + &90C_{\tilde{M}}L_{g,1}^2\left(\frac{1}{\mu_g^2} + \frac{\gamma}{\mu_g}\right)\left(\frac{L_{f,0}^2}{\mu_g^2} +L_{f,0}^2\right)\left(1-\frac{2\gamma\mu_g}{3}\right)^{N-1}. \label{ineq: z_conv_higp}
    \end{align}
\end{lemma}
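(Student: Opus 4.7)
The plan is to introduce the auxiliary deterministic optimum
\[
    \tilde z^{(k)} := \Big(\tfrac{1}{n}\textstyle\sum_{i=1}^n \nabla_y^2 g_i(x_{i,k}, y_{i,k}^{(T)})\Big)^{-1}\Big(\tfrac{1}{n}\textstyle\sum_{i=1}^n \nabla_y f_i(x_{i,k}, y_{i,k}^{(T)})\Big),
\]
i.e., the solution of the strongly convex quadratic that the HIGP oracle is, in conditional expectation, actually running on. I then split
\[
    \|\E[\bar z_N^{(k)}\mid\F_k] - z_*^{(k)}\|^2 \;\le\; 2\|\bar{\dot z}_N^{(k)} - \tilde z^{(k)}\|^2 \;+\; 2\|\tilde z^{(k)} - z_*^{(k)}\|^2,
\]
where $\dot{\cdot}$ denotes conditional expectation on $\F_k$ as in Lemma~\ref{lem: dsgd_higp}. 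The first piece will give the $(1-\gamma\mu_g)^N$ and $(1-2\gamma\mu_g/3)^{N-1}$ terms, and the second piece will give the middle term involving the consensus, the inner-loop error, and $\sigma_z^2$.

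For the first piece I use the deterministic gradient-tracking recursion recorded in the proof of Lemma~\ref{lem: dsgd_higp}. Since $s_{i,0}^{(k)}=-b_{i,0}^{(k)}$ and tracking preserves the average, $\bar{\dot d}_t^{(k)} = \bar{\dot s}_t^{(k)} = \bar H^{(k)}\bar{\dot z}_t^{(k)} - \bar b^{(k)} + \tfrac1n\sum_i H_i^{(k)}(\dot z_{i,t}^{(k)}-\bar{\dot z}_t^{(k)})$ with $\bar H^{(k)} = \tfrac1n\sum_i H_i^{(k)}$ and $\bar b^{(k)} = \tfrac1n\sum_i b_i^{(k)}$. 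Since $\mu_g I \preceq \bar H^{(k)} \preceq L_{g,1} I$, I get
\[
    \bar{\dot z}_{t+1}^{(k)} - \tilde z^{(k)} = (I-\gamma\bar H^{(k)})(\bar{\dot z}_t^{(k)}-\tilde z^{(k)}) - \tfrac{\gamma}{n}\textstyle\sum_i H_i^{(k)}(\dot z_{i,t}^{(k)}-\bar{\dot z}_t^{(k)}),
\]
and Young's inequality (factor $1+\gamma\mu_g$) yields the scalar recursion
\[
    \|\bar{\dot z}_{t+1}^{(k)} - \tilde z^{(k)}\|^2 \le (1-\gamma\mu_g)\|\bar{\dot z}_t^{(k)} - \tilde z^{(k)}\|^2 + \big(\tfrac{1}{\mu_g^2}+\tfrac{\gamma}{\mu_g}\big)L_{g,1}^2\,\tfrac{1}{n}\|\dot Z_t^{(k)} - \bar{\dot z}_t^{(k)}\bfonet\|^2.
\]
Iterating via Lemma~\ref{lem: useful_ineq}, inserting the consensus bound \eqref{ineq: z_exp_cons}, using $\bar{\dot z}_0^{(k)}=0$ and $\|\tilde z^{(k)}\|\le L_{f,0}/\mu_g$, and collapsing the geometric sum $\sum_{i=0}^{N-1}(1-\gamma\mu_g)^{N-1-i}(1-2\gamma\mu_g/3)^i \le \tfrac{3}{\gamma\mu_g}(1-2\gamma\mu_g/3)^{N-1}$ produces the $(1-\gamma\mu_g)^N\,L_{f,0}^2/\mu_g^2$ and $C_{\tilde M}L_{g,1}^2(1/\mu_g^2+\gamma/\mu_g)(L_{f,0}^2/\mu_g^2+L_{f,0}^2)(1-2\gamma\mu_g/3)^{N-1}$ contributions.

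For the second piece I apply the resolvent identity
\[
    \tilde z^{(k)} - z_*^{(k)} = \tilde H^{-1}(\tilde b - b^{(k)}) + \tilde H^{-1}(H^{(k)}-\tilde H)(H^{(k)})^{-1} b^{(k)},
\]
where $\tilde H,\tilde b$ are the local averages, $H^{(k)},b^{(k)}$ are defined in the statement. Using $\mu_g I \preceq \tilde H, H^{(k)}$, $\|b^{(k)}\|\le L_{f,0}$ (to bound $\|(H^{(k)})^{-1}b^{(k)}\|\le L_{f,0}/\mu_g$, and more generally $\|z_*^{(k)}\|^2\le \sigma_z^2$ from \eqref{ineq: z_bdd_2_m}), and the Lipschitzness of $\nabla_y f_i$ ($L_{f,1}$) and $\nabla_y^2 g_i$ ($L_{g,2}$), I bound
\[
    \|\tilde b - b^{(k)}\|^2 + \|\tilde H - H^{(k)}\|^2 \lesssim (L_{f,1}^2 + L_{g,2}^2)\Big[\tfrac{1}{n}\|X_k-\bar x_k\bfonet\|^2 + \tfrac{1}{n}\|Y_k^{(T)}-\bar y_k^{(T)}\bfonet\|^2 + \|\bar y_k^{(T)}-y_k^*\|^2\Big].
\]
Invoking Lemma~\ref{lem: XY_cons} (with $t=T$ and using $T\ge 1$ to kill the $\tilde\beta_k^2$ coefficient and $\bar y_k^{(T)}=\bar y_{k+1}^{(0)}$) in expectation yields the middle $(\frac{1}{\mu_g^2}+\frac{\gamma}{\mu_g})(L_{g,2}^2\sigma_z^2 + L_{f,1}^2)(\|\bar y_{k+1}^{(0)}-y_k^*\|^2+\sigma_x^2\tilde\alpha_k^2+T\tilde\beta_{k+1}^2)$ contribution, where $\sigma_z^2$ absorbs $\|z_*^{(k)}\|^2$ from the Hessian-difference term.

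The main obstacle is the interplay between the two decay rates $1-\gamma\mu_g$ (from the average recursion, derived via the strong convexity of $\bar H^{(k)}$) and $1-2\gamma\mu_g/3$ (the spectral bound on the matrix $\tilde M$ governing the consensus of the deterministic gradient-tracking iteration): the cross sum must collapse cleanly without extra $N$ factors, which is why I use the geometric identity above rather than a loose $N\rho_2^{N-1}$ bound. Otherwise the calculation is a bookkeeping exercise of combining Lemmas~\ref{lem: XY_cons} and~\ref{lem: dsgd_higp} with the resolvent identity.
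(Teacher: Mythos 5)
Your proof is correct in substance but takes a genuinely different decomposition from the paper's. The paper never introduces your intermediate point $\tilde z^{(k)}$: it runs the perturbed-contraction argument \emph{directly} toward $z_*^{(k)}$, writing $\dot z_{t+1,k}-z_*^{(k)}$ as an exact gradient step of the quadratic defined by $(H^{(k)},b^{(k)})$ at $(\bar x_k,y_k^*)$ plus a single per-step perturbation $\dot s_{t,k}-(H^{(k)}\dot z_{t,k}-b^{(k)})$, into which it folds everything at once — the $z$-consensus error, the Lipschitz mismatch of $\nabla_y^2 g_i$ and $\nabla_y f_i$ between $(x_{i,k},y_{i,k}^{(T)})$ and $(\bar x_k,y_k^*)$, and the inner-loop error — and then iterates once via Lemma~\ref{lem: useful_ineq}, splitting the resulting sum into the geometric cross-sum (for the $z$-consensus piece, exactly as you do) and a $\frac{1}{\gamma\mu_g}$-summed static piece. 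Your two-stage route — contract to the true fixed point $\tilde z^{(k)}$ of the deterministic tracking recursion (where the only perturbation is the $z$-consensus), then bound $\|\tilde z^{(k)}-z_*^{(k)}\|$ once via the resolvent identity — cleanly separates the optimization error from the target mismatch and is arguably more transparent; the paper's one-shot version avoids the outer factor of $2$ from your split $\|\E[\bar z_N^{(k)}|\F_k]-z_*^{(k)}\|^2\le 2\|\bar{\dot z}_N^{(k)}-\tilde z^{(k)}\|^2+2\|\tilde z^{(k)}-z_*^{(k)}\|^2$ and thereby lands exactly on the stated constants $(1-\gamma\mu_g)^N L_{f,0}^2/\mu_g^2$, $5(\cdot)$, and $90C_{\tilde M}(\cdot)$, whereas your route delivers the same three terms with numerical constants inflated by small factors (the factor of $2$, plus the triangle/Young constants in the resolvent step). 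Since only the structure of \eqref{ineq: z_conv_higp} is used downstream in Lemma~\ref{lem: last}, this discrepancy is immaterial, but as written your argument proves the lemma with weaker constants than stated. All the individual ingredients you invoke — $\bar{\dot d}_t^{(k)}=\bar{\dot s}_t^{(k)}$, the bound $\|\tilde z^{(k)}\|\le L_{f,0}/\mu_g$, the consensus bound \eqref{ineq: z_exp_cons}, the geometric cross-sum collapse, and Lemma~\ref{lem: XY_cons} at $t=T$ with the warm start $\bar y_{k+1}^{(0)}=\bar y_k^{(T)}$ — are sound and match what the paper uses.
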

\begin{proof}[Proof of Lemma~\ref{lem: higp_main}]
    Define 
    \[
        \dot{z}_{t,k} := \E\left[\bar{z}_{t}^{(k)}|\F_k\right],\ \dot{s}_{t,k} := \E\left[\bar{s}_{t}^{(k)}|\F_k\right].
    \]
    We know
    \begin{align*}
        &\dot{z}_{t+1,k} - z_*^{(k)} = \dot{z}_{t+1,k} - z_*^{(k)} = \E\left[\bar{z}_{t}^{(k)}|\F_k\right] - \gamma\E\left[\bar{s}_{t}^{(k)}|\F_k\right] - z_*^{(k)} \\
        = &\E\left[\bar{z}_{t}^{(k)}|\F_k\right] - \gamma\left(H^{(k)}\E\left[\bar{z}_{t}^{(k)}|\F_k\right] - b^{(k)}\right) - z_*^{(k)} - \gamma\left(\E\left[\bar{s}_{t}^{(k)}|\F_k\right] - \left(H^{(k)}\E\left[\bar{z}_{t}^{(k)}|\F_k\right] - b^{(k)}\right)\right) \\
        = & \dot{z}_{t,k} - \gamma\left(H^{(k)}\dot{z}_{t,k} - b^{(k)}\right) - z_*^{(k)} - \gamma\left(\dot{s}_{t,k} - \left(H^{(k)}\dot{z}_{t,k} - b^{(k)}\right)\right).
    \end{align*}
    Hence we know
    \begin{align}
        &\|\dot{z}_{t+1,k} - z_*^{(k)}\|^2 \notag \\
        = &\|\dot{z}_{t,k} - \gamma\left(H^{(k)}\dot{z}_{t,k} - b^{(k)}\right) - z_*^{(k)} - \gamma\left(\dot{s}_{t,k} - \left(H^{(k)}\dot{z}_{t,k} - b^{(k)}\right)\right)\|^2 \notag \\
        \leq & (1+\gamma\mu_g)\|\dot{z}_{t,k} - \gamma\left(H^{(k)}\dot{z}_{t,k} - b^{(k)}\right) - z_*^{(k)}\|^2 + (1 + \frac{1}{\gamma\mu_g})\gamma^2\|\dot{s}_{t,k} - \left(H^{(k)}\dot{z}_{t,k} - b^{(k)}\right)\|^2 \notag \\
        \leq & (1+\gamma\mu_g)(1 - \gamma\mu_g)^2\|\dot{z}_{t,k} - z_*^{(k)}\|^2 + \left(\frac{\gamma}{\mu_g} + \gamma^2\right)\|\dot{s}_{t,k} - \left(H^{(k)}\dot{z}_{t,k} - b^{(k)}\right)\|^2 \notag\\
        \leq &(1-\gamma\mu_g)\|\dot{z}_{t,k} - z_*^{(k)}\|^2 + \left(\frac{\gamma}{\mu_g} + \gamma^2\right)\|\dot{s}_{t,k} - \left(H^{(k)}\dot{z}_{t,k} - b^{(k)}\right)\|^2, \label{ineq: dot_z_t}
    \end{align}
    where the second inequality uses Lemma \ref{lem: gd_decrease}. For $\dot{s}_{t,k} - \left(H^{(k)}\dot{z}_{t,k} - b^{(k)}\right)$ we have
    \begin{align*}
        &\|\dot{s}_{t,k} - \left(H^{(k)}\dot{z}_{t,k} - b^{(k)}\right)\|^2 \\
        = &\frac{1}{n^2}\|\sum_{i=1}^{n}\left(\nabla_y^2g_i(x_{i,k},y_{i,k}^{(T)})\E\left[z_{i,t}^{(k)}|\F_k\right] - \nabla_y^2g_i(\bar{x}_k,y_k^*)\E\left[\bar{z}_t^{(k)}|\F_k\right] + \nabla_y f_i(\bar{x}_k, y_k^*) - \nabla_y f_i(x_{i,k}, y_{i,k}^{(T)})\right)\|^2 \\
        = &\frac{1}{n^2}\|\sum_{i=1}^{n}\left[\nabla_y^2g_i(x_{i,k},y_{i,k}^{(T)})\E\left[z_{i,t}^{(k)} - \bar{z}_t^{(k)}|\F_k\right] - \left(\nabla_y^2g_i(x_{i,k},y_{i,k}^{(T)}) - \nabla_y^2g_i(\bar{x}_k, \bar{y}_k^{(T)}) \right)\dot{z}_{t,k}\right]\\
        + &\sum_{i=1}^{n}\left[\left(\nabla_y^2g_i(\bar{x}_k, \bar{y}_k^{(T)}) - \nabla_y^2g_i(\bar{x}_k, y_k^*)\right)\dot{z}_{t,k} + \nabla_y f_i(\bar{x}_k, y_k^*) - \nabla_y f_i(\bar{x}_k, \bar{y}_k^{(T)}) + \nabla_y f_i(\bar{x}_k, \bar{y}_k^{(T)}) - \nabla_y f_i(x_{i,k}, y_{i,k}^{(T)})\right]\|^2 \\
        \leq & \frac{5}{n}\sum_{i=1}^{n}\left[L_{g,1}^2\|\E\left[z_{i,t}^{(k)} - \bar{z}_t^{(k)}|\F_k\right]\|^2 + (\|x_{i,k}-\bar{x}_k\|^2 + \|y_{i,k}^{(T)} - \bar{y}_k^{(T)}\|^2 + \|\bar{y}_k^{(T)} - y_k^*\|^2)(L_{g,2}^2\|\dot{z}_{t,k}\|^2 + L_{f,1}^2) \right] \\
        = & \frac{5L_{g,1}^2}{n}\|\E\left[Z_{t}^{(k)} - \bar{z}_{t}^{(k)}\bfonet|\F_k\right]\|^2 + \frac{5\left(L_{g,2}^2\sigma_z^2 + L_{f,1}^2\right)}{n}\left(n\|\bar{y}_k^{(T)} - y_k^*\|^2 + \|X_k - \bar{x}_k\bfonet\|^2 + \|Y_k^{(T)} - \bar{y}_k^{(T)}\bfonet\|^2\right).
    \end{align*}
    The above inequality and \eqref{ineq: dot_z_t} imply
    \begin{align}
        \E\left[\|\dot{z}_{N,k} - z_*^{(k)}\|^2\right]\leq & (1-\gamma\mu_g)\E\left[\|\dot{z}_{N-1,k} - z_*^{(k)}\|^2\right] + \left(\frac{\gamma}{\mu_g} + \gamma^2\right)\E\left[\|\dot{s}_{N-1,k} - \left(H^{(k)}\dot{z}_{N-1,k} - b^{(k)}\right)\|^2\right]\notag\\
            \leq &(1-\gamma\mu_g)^N\E\left[\|z_*^{(k)}\|^2\right] +  \frac{5L_{g,1}^2\left(\frac{\gamma}{\mu_g} + \gamma^2\right)}{n}\sum_{t=0}^{N-1}(1-\gamma\mu_g)^{N-1-t}\E\left[\|\E\left[Z_{t}^{(k)} - \bar{z}_{t}^{(k)}\bfonet|\F_k\right]\|^2\right] \notag\\
            +&\frac{\frac{\gamma}{\mu_g} + \gamma^2}{1-(1-\gamma\mu_g)}\cdot \frac{5\left(L_{g,2}^2\sigma_z^2 + L_{f,1}^2\right)}{n}\E\left[n\|\bar{y}_k^{(T)} - y_k^*\|^2 + \|X_k - \bar{x}_k\bfonet\|^2 + \|Y_k^{(T)} - \bar{y}_k^{(T)}\bfonet\|^2\right]\notag\\
            \leq &(1-\gamma\mu_g)^{N}\cdot\frac{L_{f,0}^2}{\mu_g^2} + 5\left(\frac{1}{\mu_g^2} + \frac{\gamma}{\mu_g}\right)\left(L_{g,2}^2\sigma_z^2 + L_{f,1}^2\right)\left(\E\left[\|\bar{y}_k^{(T)} - y_k^*\|^2\right] + \sigma_x^2\tilde{\alpha}_k^2 + T\tilde{\beta}_{k+1}^2\right) \notag\\
            + &5L_{g,1}^2\left(\frac{\gamma}{\mu_g} + \gamma^2\right)\sum_{t=0}^{N-1}\left(1-\frac{\gamma\mu_g}{2}\right)^{N-1-t}\left(3C_{\tilde{M}}\left(1-\frac{2\gamma\mu_g}{3}\right)^t\left(\frac{L_{f,0}^2}{\mu_g^2} + L_{f,0}^2\right)\right) \notag\\
            \leq& (1-\gamma\mu_g)^{N}\cdot\frac{L_{f,0}^2}{\mu_g^2} + 5\left(\frac{1}{\mu_g^2} + \frac{\gamma}{\mu_g}\right)\left(L_{g,2}^2\sigma_z^2 + L_{f,1}^2\right)\left(\E\left[\|\bar{y}_k^{(T)} - y_k^*\|^2\right] + \sigma_x^2\tilde{\alpha}_k^2 + T\tilde{\beta}_{k+1}^2\right) \notag \\
            + &90C_{\tilde{M}}L_{g,1}^2\left(\frac{1}{\mu_g^2} + \frac{\gamma}{\mu_g}\right)\left(\frac{L_{f,0}^2}{\mu_g^2} +L_{f,0}^2\right)\left(1-\frac{2\gamma\mu_g}{3}\right)^{N-1},\label{ineq: dot_z_N}
    \end{align}
    where the second inequality uses Lemma \ref{lem: useful_ineq}, the third inequality uses \eqref{ineq: z_exp_cons}, and the fourth inequality holds since
    \[
        \sum_{t=0}^{N-1-t}\left(1-\frac{\gamma\mu_g}{2}\right)^{N-1-t}\left(1-\frac{2\gamma\mu_g}{3}\right)^t = \left(1-\frac{2\gamma\mu_g}{3}\right)^{N-1}\sum_{t=0}^{N-1}\frac{1-\frac{2\gamma\mu_g}{3}}{1-\frac{\gamma\mu_g}{2}} < \left(1-\frac{2\gamma\mu_g}{3}\right)^{N-1}\cdot \frac{6}{\gamma\mu_g}.
    \]
\end{proof}
\begin{lemma}\label{lem: all_const_sum}
    If $0< \beta_k\leq 1$ and $\alpha_k > 0$ for any $k\geq 0$, then the parameters $\tilde{\alpha}_k,\ \tilde{\beta}_k,$ and $C_{k,T}$ defined in Lemmas \ref{lem: XY_cons} and \ref{lem: y_error} satisfy
    \begin{align*}
        &\sum_{k=0}^{K}\tilde{\alpha}_k^2\leq \frac{2}{1-\rho^2}\sum_{i=0}^{K-1}\alpha_i^2 = \cO\left(\sum_{k=0}^{K}\alpha_k^2\right)\\
        &\sum_{k=0}^{K}\tilde{\beta}_{k+1}^2\leq \frac{4(1+\rho^2)}{(1-\rho^2)^2}\left[\left(10\sigma_{g,1}^2 + 5\delta^2\right)\sum_{i=0}^{K}\beta_i^2 + \frac{20L_{g,1}^2\sigma_x^2}{1-\rho^2}\sum_{i=0}^{K-1}\alpha_i^2\right] = \cO\left(\sum_{k=0}^{K}(\alpha_k^2+\beta_k^2)\right) \\
        &\sum_{k=1}^{K}C_{k,T}\leq \left(\frac{1}{\mu_g} + 1\right)L_{g,1}^2\left[T\sigma_x^2\sum_{k=1}^{K}\tilde{\alpha}_k^2 + 2T^2\sum_{k=0}^{K}\tilde{\beta}_{k+1}^2\right] + \frac{T\sigma_{g,1}^2}{n}\sum_{k=1}^{K}\beta_k^2 = \cO\left(\sum_{k=0}^{K}(\alpha_k^2+\beta_k^2)\right).
    \end{align*}
\end{lemma}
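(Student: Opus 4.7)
The three bounds all follow from a common strategy: interchanging the order of summation and then collapsing a geometric series. No probabilistic content is needed---this is purely an algebraic lemma about the recursively defined weights $\tilde\alpha_k,\tilde\beta_k,C_{k,T}$.

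\textbf{First bound.} Starting from $\tilde{\alpha}_{k}^2 = \sum_{i=0}^{k-1}\alpha_i^2((1+\rho^2)/2)^{k-1-i}$ and swapping the order of summation,
\[
\sum_{k=0}^{K}\tilde{\alpha}_k^2 \;=\; \sum_{i=0}^{K-1}\alpha_i^2\sum_{k=i+1}^{K}\Bigl(\tfrac{1+\rho^2}{2}\Bigr)^{k-1-i}\;\le\;\sum_{i=0}^{K-1}\alpha_i^2\cdot\frac{1}{1-\tfrac{1+\rho^2}{2}}\;=\;\frac{2}{1-\rho^2}\sum_{i=0}^{K-1}\alpha_i^2.
\]

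\textbf{Second bound.} Apply the identical swap-and-geometric-series trick to the definition of $\tilde{\beta}_{k+1}^2$: the weights $((3+\rho^2)/4)^{k-i}$ sum to at most $4/(1-\rho^2)$. This yields
\[
\sum_{k=0}^{K}\tilde\beta_{k+1}^2 \;\le\; \frac{4(1+\rho^2)}{(1-\rho^2)^2}\sum_{i=0}^{K}\beta_i^2\bigl(2\sigma_{g,1}^2+6L_{g,1}^2\sigma_x^2\tilde{\alpha}_i^2+3\delta^2\bigr).
\]
To eliminate $\tilde{\alpha}_i^2$ in the cross term, I would use $\beta_i\le 1$ to write $\beta_i^2\tilde\alpha_i^2\le\tilde\alpha_i^2$ and invoke the first bound on $\sum_i\tilde\alpha_i^2$. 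The constants $10\sigma_{g,1}^2+5\delta^2$ and $20L_{g,1}^2\sigma_x^2$ in the statement are looser than what this direct calculation produces ($2$ and $12$ respectively), so a slack-absorbing inequality of the form $2\le 10,\ 3\le 5,\ 6\cdot 2/(1-\rho^2)\le 20/(1-\rho^2)$ closes the gap.

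\textbf{Third bound.} Inserting $\beta_k/\mu_g+\beta_k^2\le 1/\mu_g+1$ (valid since $\beta_k\le\min\{1,1/\mu_g\}$) into the definition of $C_{k,T}$ separates the sum into three pieces corresponding to the three $\tilde\beta/\tilde\alpha$-dependent terms plus the $\sigma_{g,1}^2/n$ term. For the three pieces I would apply the elementary estimates
\[
\sum_{l=0}^{T-1}1=T,\qquad \sum_{l=0}^{T-1}\Bigl(\tfrac{3+\rho^2}{4}\Bigr)^{l}T\le T^2,\qquad \sum_{l=0}^{T-1}l\le T^2/2,
\]
which respectively produce $T\sigma_x^2\tilde\alpha_k^2$, $T^2\tilde\beta_k^2$, and $(T^2/2)\tilde\beta_{k+1}^2$ inside the outer sum over $k$. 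Finally, I would absorb $\sum_{k=1}^K\tilde\beta_k^2\le\sum_{k=0}^K\tilde\beta_{k+1}^2$ via the index shift $\tilde\beta_k^2=\tilde\beta_{(k-1)+1}^2$, which lets me combine the $T^2$ and $T^2/2$ contributions into the single constant $2T^2$ matching the statement.

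\textbf{Expected difficulties.} None of the steps is technically hard; the only bookkeeping subtlety is the reindexing needed to turn $\tilde\beta_k^2$ into $\tilde\beta_{k+1}^2$ without losing a boundary term, and verifying the constant factors match the stated $(10,5,20)$ coefficients. The final order-of-magnitude conclusions $\mathcal O(\sum\alpha_k^2)$ and $\mathcal O(\sum(\alpha_k^2+\beta_k^2))$ are then immediate from the explicit bounds.
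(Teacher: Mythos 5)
Your proposal is correct and follows essentially the same route as the paper's proof: swap the order of summation and collapse the geometric series for the first two bounds, reuse the first bound (together with $\beta_i\le 1$) to absorb the $\tilde\alpha_i^2$ cross term in the second, and bound the inner sum over $l$ by $T$, $T^2$, and $T^2/2$ respectively before reindexing $\tilde\beta_k^2$ into $\tilde\beta_{k+1}^2$ for the third. Your observation that the stated constants $(10,5,20)$ are looser than what the calculation yields is accurate — the paper itself silently inflates the coefficients $(2,6,3)$ from the definition of $\tilde\beta_{k+1}^2$ to $(10,10,5)$ before summing, so no gap exists either way.
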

\begin{proof}[Proof of Lemma~\ref{lem: all_const_sum}]
    The first inequality holds due to $\tilde{\alpha}_0 = 0$ and 
    \begin{align*}
        \sum_{k=0}^{K-1}\tilde{\alpha}_{k+1}^2 = \sum_{k=0}^{K-1}\sum_{i=0}^{k}\alpha_i^2\left(\frac{1+\rho^2}{2}\right)^{k-i} = \sum_{i=0}^{K-1}\sum_{k=i}^{K-1}\alpha_i^2\left(\frac{1+\rho^2}{2}\right)^{k-i}\leq \frac{2}{1-\rho^2}\sum_{i=0}^{K-1}\alpha_i^2.
    \end{align*}
    Similarly, we have
    \begin{align*}
        &\sum_{k=0}^{K}\tilde{\beta}_{k+1}^2 = \frac{1+\rho^2}{1-\rho^2}\sum_{k=0}^{K}\sum_{i=0}^{k}\beta_i^2(10\sigma_{g,1}^2 + 10L_{g,1}^2\sigma_x^2\tilde{\alpha}_i^2 + 5\delta^2)\left(\frac{3+\rho^2}{4}\right)^{k-i} \\
        \leq & \frac{4(1+\rho^2)}{(1-\rho^2)^2}\sum_{i=0}^{K}\beta_i^2(10\sigma_{g,1}^2 + 10L_{g,1}^2\sigma_x^2\tilde{\alpha}_i^2 + 5\delta^2) \leq \frac{4(1+\rho^2)}{(1-\rho^2)^2}\left[(10\sigma_{g,1}^2 + 5\delta^2)\sum_{i=0}^{K}\beta_i^2 + \frac{20L_{g,1}^2\sigma_x^2}{1-\rho^2}\sum_{i=0}^{K-1}\alpha_i^2\right].
    \end{align*}
    Lastly, we know
    \begin{align*}
        \sum_{k=1}^{K}C_{k,T} = &\sum_{k=1}^{K}\sum_{l=0}^{T-1}\left[\left(\frac{\beta_k}{\mu_g} + \beta_k^2\right)L_{g,1}^2\left(\sigma_x^2\tilde{\alpha}_k^2 + \left[\left(\frac{3+\rho^2}{4}\right)^{l}T - l\left(\frac{3+\rho^2}{4}\right)\right]\tilde{\beta}_k^2 + l\tilde{\beta}_{k+1}^2\right) + \frac{\beta_k^2\sigma_{g,1}^2}{n}\right] \\
        \leq &\sum_{k=1}^{K}\left(\frac{\beta_k}{\mu_g} + \beta_k^2\right)L_{g,1}^2\left(T\sigma_x^2\tilde{\alpha}_k^2 + T^2\tilde{\beta}_k^2 + T^2\tilde{\beta}_{k+1}^2\right) + \sum_{k=1}^{K}T\frac{\beta_k^2\sigma_{g,1}^2}{n} \\
        \leq &\left(\frac{1}{\mu_g} + 1\right)L_{g,1}^2\left[T\sigma_x^2\sum_{k=1}^{K}\tilde{\alpha}_k^2 + 2T^2\sum_{k=0}^{K}\tilde{\beta}_{k+1}^2\right] + \frac{T\sigma_{g,1}^2}{n}\sum_{k=1}^{K}\beta_k^2,
    \end{align*}
    where the last inequality uses $0< \beta_k\leq 1$.
\end{proof}

Now we are ready to give the proof of Theorem \ref{thm: main}. 

\begin{lemma}\label{lem: r_bar_sum}
    Suppose Assumptions \ref{assump: lip_convexity}, \ref{assump: W}, \ref{assump: similarity_of_g}, and \ref{assump: stoc_derivatives} hold. For Algorithm \ref{algo:DSBO} we have
    \begin{equation}\label{ineq: main_of_MA}
        \sum_{k=0}^{K}\left(\frac{\alpha_k}{2} - \frac{L_{\Phi}\alpha_k^2}{2}\right)\E\left[\|\bar{r}_k\|^2\right] \leq \frac{1}{2}\sum_{k=0}^{K}\alpha_k\E\left[\|\E\left[\bar{u}_k|\F_k\right] - \nabla\Phi(\bar{x}_k)\|^2\right] + 2\sigma_u^2\sum_{k=0}^{K}\alpha_k^2 + \Phi(0) - \inf_x\Phi(x) + \frac{1}{2}\E\left[\|\bar{r}_0\|^2\right].
    \end{equation}
\end{lemma}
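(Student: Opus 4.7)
The plan is to produce the lemma by combining the standard smoothness descent inequality with a moving-average noise estimate. First, since $\Phi$ is $L_\Phi$-smooth by Lemma~\ref{lem: smoothness} and the averaged outer update takes the form $\bar{x}_{k+1} = \bar{x}_k - \alpha_k \bar{r}_k$, the quadratic upper bound gives the one-step descent $\Phi(\bar{x}_{k+1}) \le \Phi(\bar{x}_k) - \alpha_k \langle \nabla\Phi(\bar{x}_k), \bar{r}_k\rangle + \tfrac{L_\Phi\alpha_k^2}{2}\|\bar{r}_k\|^2$. I then apply the polarization identity $-2\langle a,b\rangle = \|a-b\|^2 - \|a\|^2 - \|b\|^2$ with $a = \nabla\Phi(\bar{x}_k)$ and $b = \bar{r}_k$ and rearrange, which isolates the target coefficient $\bigl(\tfrac{\alpha_k}{2} - \tfrac{L_\Phi\alpha_k^2}{2}\bigr)\|\bar{r}_k\|^2$ on the left, up to a $\tfrac{\alpha_k}{2}\|\bar{r}_k - \nabla\Phi(\bar{x}_k)\|^2$ residual and a nonpositive $-\tfrac{\alpha_k}{2}\|\nabla\Phi(\bar{x}_k)\|^2$ on the right.

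Summing the displayed inequality over $k=0,\ldots,K$, taking expectations, using $\bar{x}_0 = 0$ to write $\Phi(\bar{x}_0) = \Phi(0)$, bounding $\Phi(\bar{x}_{K+1}) \ge \inf_x \Phi(x)$, and discarding the nonpositive gradient-norm term yields the intermediate bound $\sum_{k=0}^K \bigl(\tfrac{\alpha_k}{2} - \tfrac{L_\Phi \alpha_k^2}{2}\bigr) \E\|\bar{r}_k\|^2 \le \Phi(0) - \inf_x \Phi(x) + \tfrac{1}{2} \sum_{k=0}^K \alpha_k \E\|\bar{r}_k - \nabla\Phi(\bar{x}_k)\|^2$.

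The remaining task is to replace $\E\|\bar{r}_k - \nabla\Phi(\bar{x}_k)\|^2$ with the conditional bias $\E\|\E[\bar{u}_k|\F_k] - \nabla\Phi(\bar{x}_k)\|^2$. Here I exploit the moving-average recursion $\bar{r}_{k+1} = (1-\alpha_k)\bar{r}_k + \alpha_k \bar{u}_k$. Decomposing $\bar{u}_k = \E[\bar{u}_k|\F_k] + \eta_k$ with $\eta_k$ conditionally mean zero given $\F_k$, the cross terms involving $\eta_k$ vanish under $\E[\cdot|\F_k]$ (since $\bar{r}_k$ and $\bar{x}_k$ are $\F_k$-measurable), and convexity of $\|\cdot\|^2$ applied to the $\alpha_k$-weighted combination delivers the one-step recursion $\E\|\bar{r}_{k+1} - \nabla\Phi(\bar{x}_k)\|^2 \le (1-\alpha_k) \E\|\bar{r}_k - \nabla\Phi(\bar{x}_k)\|^2 + \alpha_k \E\|\E[\bar{u}_k|\F_k] - \nabla\Phi(\bar{x}_k)\|^2 + \alpha_k^2 \sigma_u^2$, where the variance ceiling $\sigma_u^2$ is supplied by Lemma~\ref{lem: XY_cons}. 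Rearranging into $\alpha_k \E\|\bar{r}_k - \nabla\Phi(\bar{x}_k)\|^2 \le \E\|\bar{r}_k - \nabla\Phi(\bar{x}_k)\|^2 - \E\|\bar{r}_{k+1} - \nabla\Phi(\bar{x}_k)\|^2 + \alpha_k \E\|\E[\bar{u}_k|\F_k] - \nabla\Phi(\bar{x}_k)\|^2 + \alpha_k^2 \sigma_u^2$ and summing, with the initial term producing the $\tfrac{1}{2}\E\|\bar{r}_0\|^2$ boundary contribution, and with smoothness of $\Phi$ absorbing the index mismatch between $\nabla\Phi(\bar{x}_k)$ and $\nabla\Phi(\bar{x}_{k+1})$ as an $\mathcal{O}(\alpha_k^2 \sigma_u^2)$ correction, yields the claimed tail.

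The subtle point is the third step: a naive triangle split $\|\bar{r}_k - \nabla\Phi(\bar{x}_k)\|^2 \le 2\|\bar{r}_k - \E[\bar{u}_k|\F_k]\|^2 + 2\|\E[\bar{u}_k|\F_k] - \nabla\Phi(\bar{x}_k)\|^2$ doubles the bias coefficient and breaks the required $\tfrac{1}{2}$ factor on the right. The right route is to ride the convex-combination structure of the moving-average recursion directly, so the bias enters with weight $\alpha_k$ (not $2\alpha_k$), while the conditional variance term accumulates only as $\sigma_u^2 \sum_k \alpha_k^2$ thanks to orthogonality between $\eta_k$ and every $\F_k$-measurable iterate.
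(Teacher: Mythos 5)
Your first half is fine and is essentially a rearrangement of the paper's descent step: smoothness plus the polarization identity gives
\[
\sum_{k=0}^{K}\Big(\tfrac{\alpha_k}{2}-\tfrac{L_{\Phi}\alpha_k^2}{2}\Big)\E\big[\|\bar{r}_k\|^2\big]\leq \Phi(0)-\inf_x\Phi(x)+\tfrac{1}{2}\sum_{k=0}^{K}\alpha_k\E\big[\|\bar{r}_k-\nabla\Phi(\bar{x}_k)\|^2\big].
\]
The gap is in the second half, where you try to convert $\E[\|\bar{r}_k-\nabla\Phi(\bar{x}_k)\|^2]$ into the conditional bias $\E[\|\E[\bar{u}_k|\F_k]-\nabla\Phi(\bar{x}_k)\|^2]$ by telescoping the moving-average recursion. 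Your one-step recursion keeps $\nabla\Phi(\bar{x}_k)$ frozen on both sides, so the telescoping requires comparing $\E[\|\bar{r}_{k+1}-\nabla\Phi(\bar{x}_k)\|^2]$ with $\E[\|\bar{r}_{k+1}-\nabla\Phi(\bar{x}_{k+1})\|^2]$. The difference contains the cross term $2\E[\langle \bar{r}_{k+1}-\nabla\Phi(\bar{x}_k),\,\nabla\Phi(\bar{x}_k)-\nabla\Phi(\bar{x}_{k+1})\rangle]$, and since $\|\nabla\Phi(\bar{x}_k)-\nabla\Phi(\bar{x}_{k+1})\|\leq L_{\Phi}\alpha_k\|\bar{r}_k\|$, this term is $\cO(\alpha_k)$, not $\cO(\alpha_k^2)$ as you claim. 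Summed over $k$ with $\alpha_k=\Theta(1/\sqrt{K})$ this contributes $\cO(\sqrt{K})$, which destroys the bound; if you instead absorb it via Young's inequality you pick up a multiple of $\sum_k\alpha_k\E[\|\bar{r}_k\|^2]$ on the right whose coefficient (roughly $L_{\Phi}^2/2$ after the outer factor $1/2$) cannot be absorbed into the left-hand coefficient $\alpha_k/2$ without assuming $L_{\Phi}<1$. This is exactly why the paper's own Lemma~\ref{lem: r_and_grad}, which carries out this recursion, ends up with the extra term $2\sum_k\alpha_k\E[\|\bar{r}_k\|^2]$ and a factor $2$ on the bias; it is used later, not to prove \eqref{ineq: main_of_MA}. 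Your route therefore does not yield the stated inequality (nor its exact constants $\tfrac12\E[\|\bar r_0\|^2]$ and $2\sigma_u^2\sum_k\alpha_k^2$).

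The paper avoids the term $\|\bar{r}_k-\nabla\Phi(\bar{x}_k)\|^2$ altogether. Instead of polarizing the descent inequality, it adds to it the exact expansion
\[
\tfrac{1}{2}\E\big[\|\bar{r}_{k+1}\|^2|\F_k\big]-\tfrac{1}{2}\|\bar{r}_k\|^2=-\alpha_k\|\bar{r}_k\|^2+\alpha_k\E\big[\bar{u}_k|\F_k\big]\T\bar{r}_k+\tfrac{1}{2}\E\big[\|\bar{r}_{k+1}-\bar{r}_k\|^2|\F_k\big],
\]
so that the two inner products combine into $\alpha_k(\E[\bar{u}_k|\F_k]-\nabla\Phi(\bar{x}_k))\T\bar{r}_k$; Young's inequality then produces the bias term with the correct $\tfrac{\alpha_k}{2}$ weight directly, the $-\alpha_k\|\bar{r}_k\|^2$ supplies the negative quadratic, $\tfrac12\E[\|\bar{r}_{k+1}-\bar{r}_k\|^2]=\tfrac{\alpha_k^2}{2}\E[\|\bar{u}_k-\bar{r}_k\|^2]\leq 2\alpha_k^2\sigma_u^2$ gives the variance term, and telescoping the Lyapunov function $\Phi(\bar{x}_k)+\tfrac12\E[\|\bar{r}_k\|^2]$ yields both $\Phi(0)-\inf_x\Phi(x)$ and $\tfrac12\E[\|\bar{r}_0\|^2]$. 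You should restructure your argument around this combined Lyapunov step rather than the polarization-plus-recursion route.
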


\begin{proof}[Proof of Lemma~\ref{lem: r_bar_sum}]
    The $L_{\Phi}$-smoothness of $\Phi$ indicates that
\begin{equation}\label{ineq: l_smooth_ineq}
    \Phi(\bar{x}_{k+1}) - \Phi(\bar{x}_k) \leq \nabla\Phi(\bar{x}_k)\T(-\alpha_k\bar{r}_k) + \frac{L_{\Phi}\alpha_k^2}{2}\|\bar{r}_k\|^2.
\end{equation}
Notice that we also have
\begin{equation}\label{eq: difference_of_r}
    \frac{1}{2}\E\left[\|\bar{r}_{k+1}\|^2|\F_k\right] - \frac{1}{2}\|\bar{r}_k\|^2 = -\alpha_k\|\bar{r}_k\|^2 +\alpha_k\E\left[\bar{u}_k|\F_k\right]\T\bar{r}_k+ \frac{1}{2}\E\left[\|\bar{r}_{k+1} - \bar{r}_k\|^2|\F_k\right].
\end{equation}
Hence we know
\begin{align*}
    &\Phi(\bar{x}_{k+1}) - \Phi(\bar{x}_k) + \frac{1}{2}\E\left[\|\bar{r}_{k+1}\|^2|\F_k\right] - \frac{1}{2}\|\bar{r}_k\|^2\\
    \leq &\alpha_k(\E\left[\bar{u}_k|\F_k\right] - \nabla\Phi(\bar{x}_k))\T\bar{r}_k + (\frac{L_{\Phi}\alpha_k^2}{2} - \alpha_k)\|\bar{r}_k\|^2 + \frac{1}{2}\E\left[\|\bar{r}_{k+1} - \bar{r}_k\|^2|\F_k\right] \\
    \leq & \frac{\alpha_k}{2}\left(\|\E\left[\bar{u}_k|\F_k\right] - \nabla\Phi(\bar{x}_k)\|^2 + \|\bar{r}_k\|^2\right) + (\frac{L_{\Phi}\alpha_k^2}{2} - \alpha_k)\|\bar{r}_k\|^2 + \frac{1}{2}\E\left[\|\bar{r}_{k+1} - \bar{r}_k\|^2|\F_k\right],
\end{align*}
which implies
\begin{equation}\label{ineq: sum_r_original}
    \begin{aligned}
        &\left(\frac{\alpha_k}{2} - \frac{L_{\Phi}\alpha_k^2}{2}\right)\E\left[\|\bar{r}_k\|^2\right]\\
    \leq &\frac{\alpha_k}{2}\E\left[\|\E\left[\bar{u}_k|\F_k\right] - \nabla\Phi(\bar{x}_k)\|^2\right] +   \frac{1}{2}\E\left[\|\bar{r}_{k+1} - \bar{r}_k\|^2\right] + \E\left[\Phi(\bar{x}_k) - \Phi(\bar{x}_{k+1})\right] +\frac{1}{2}\E\left[\|\bar{r}_k\|^2\right] - \frac{1}{2}\E\left[\|\bar{r}_{k+1}\|^2\right]\\
    \leq &\frac{\alpha_k}{2}\E\left[\|\E\left[\bar{u}_k|\F_k\right] - \nabla\Phi(\bar{x}_k)\|^2\right] + 2\alpha_k^2\sigma_u^2 + \E\left[\Phi(\bar{x}_k) - \Phi(\bar{x}_{k+1})\right] +\frac{1}{2}\E\left[\|\bar{r}_k\|^2\right] - \frac{1}{2}\E\left[\|\bar{r}_{k+1}\|^2\right],
    \end{aligned}
\end{equation}
where the second inequality holds since we know
\begin{align*}
    &\E\left[\|\bar{r}_k\|^2\right]\leq \max\left(\E\left[\|\bar{r}_{k-1}\|^2\right], \E\left[\|\bar{u}_k\|^2\right]\right)\leq \max_{0\leq i\leq k}\E\left[\|\bar{u}_i\|^2\right]\leq \sigma_u^2, \\
    &\E\left[\|\bar{r}_{k+1} - \bar{r}_k\|^2\right] = \alpha_k^2\E\left[\|\bar{r}_k - \bar{u}_k\|^2\right]\leq 2\alpha_k^2\E\left[\|\bar{r}_k\|^2 + \|\bar{u}_k\|^2\right]\leq 4\sigma_u^2.
\end{align*}
In these two conclusions $\E\left[\|\bar{u}_i\|^2\right]\leq \sigma_u^2$ is due to the first inequality in \eqref{ineq: XY_consensus}. Taking summation on both sides of \eqref{ineq: sum_r_original}, we have~\eqref{ineq: main_of_MA}.
\end{proof}

\begin{lemma}\label{lem: r_and_grad}
    For Algorithm \ref{algo:DSBO} we have
    \begin{align}\label{eq:temp}
    \begin{aligned}
        \sum_{k=0}^{K}\alpha_k\E\left[\|\bar{r}_k -\nabla\Phi(\bar{x}_k)\|^2\right]&\leq \E\left[\|\bar{r}_0 - \nabla\Phi(0)\|^2\right] + 2\sum_{k=0}^{K}\alpha_k\E\left[\|\E\left[\bar{u}_k|\F_k\right] - \nabla\Phi(\bar{x}_k)\|^2\right] + \\
        &\qquad \qquad 2\sum_{k=0}^{K}\alpha_k\E\left[\|\bar{r}_k\|^2\right] + \sigma_u^2\sum_{k=0}^{K}\alpha_k^2.
        \end{aligned}
    \end{align}
\end{lemma}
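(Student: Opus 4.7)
The plan is to derive a one-step recursion for $\E\|\bar r_k-\nabla\Phi(\bar x_k)\|^2$ that contracts as $(1-\alpha_k)$, has a bias-squared forcing term, a smoothness-change forcing term, and an $\alpha_k^2\sigma_u^2$ noise term; then rearrange and telescope. Let $\delta_k := \bar r_k-\nabla\Phi(\bar x_k)$. From line 12 of Algorithm \ref{algo:DSBO} we have $\bar r_{k+1}=(1-\alpha_k)\bar r_k+\alpha_k\bar u_k$, so
\begin{equation*}
    \delta_{k+1}=(1-\alpha_k)\delta_k+\alpha_k\bigl(\bar u_k-\nabla\Phi(\bar x_k)\bigr)+\bigl(\nabla\Phi(\bar x_k)-\nabla\Phi(\bar x_{k+1})\bigr).
\end{equation*}
Split $\bar u_k-\nabla\Phi(\bar x_k)=(\E[\bar u_k\mid\F_k]-\nabla\Phi(\bar x_k))+(\bar u_k-\E[\bar u_k\mid\F_k])$. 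Since $\bar x_{k+1}=\bar x_k-\alpha_k\bar r_k$ is $\F_k$-measurable, the second piece is the only non-$\F_k$-measurable term and is mean-zero conditional on $\F_k$.

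The next step is to take conditional expectation. Writing $\delta_{k+1}=Q_k+N_k$ where $Q_k$ is the $\F_k$-measurable part and $N_k=\alpha_k(\bar u_k-\E[\bar u_k\mid\F_k])$, we get $\E[\|\delta_{k+1}\|^2\mid\F_k]=\|Q_k\|^2+\E[\|N_k\|^2\mid\F_k]$. The noise contribution satisfies $\E\|N_k\|^2\leq\alpha_k^2\E\|\bar u_k\|^2\leq\alpha_k^2\sigma_u^2$ by the first bound in Lemma~\ref{lem: XY_cons}. For $\|Q_k\|^2$ I would apply Young's inequality with parameter $\eta=\alpha_k/(1-\alpha_k)$ on $(1-\alpha_k)\delta_k$ versus the remaining two terms, which produces the clean contraction $(1+\eta)(1-\alpha_k)^2=(1-\alpha_k)$ together with a prefactor $1+\eta^{-1}=\alpha_k^{-1}$ on the rest. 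Using $\|a+b\|^2\leq 2\|a\|^2+2\|b\|^2$ on the remaining two terms and bounding $\|\nabla\Phi(\bar x_k)-\nabla\Phi(\bar x_{k+1})\|\leq L_\Phi\alpha_k\|\bar r_k\|$ via the smoothness of $\Phi$ from Lemma~\ref{lem: smoothness}, this yields
\begin{equation*}
    \E\|\delta_{k+1}\|^2\leq(1-\alpha_k)\E\|\delta_k\|^2+2\alpha_k\E\|\E[\bar u_k\mid\F_k]-\nabla\Phi(\bar x_k)\|^2+2L_\Phi^2\alpha_k\E\|\bar r_k\|^2+\alpha_k^2\sigma_u^2,
\end{equation*}
which matches the inequality in the statement (with the constant $2$ on $\alpha_k\E\|\bar r_k\|^2$ absorbing $L_\Phi^2$, consistently with the paper's convention of absorbing smoothness constants).

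To finish, rearrange to $\alpha_k\E\|\delta_k\|^2\leq\E\|\delta_k\|^2-\E\|\delta_{k+1}\|^2+2\alpha_k\E\|\text{bias}_k\|^2+2\alpha_k\E\|\bar r_k\|^2+\alpha_k^2\sigma_u^2$ and sum from $k=0$ to $K$; the first two terms telescope to $\E\|\delta_0\|^2-\E\|\delta_{K+1}\|^2\leq\E\|\delta_0\|^2$, and since $\bar x_0=0$ by the initialization in Algorithm~\ref{algo:DSBO}, this yields exactly the bound in \eqref{eq:temp}. The main bookkeeping obstacle is calibrating the two Young's-inequality parameters so that the contraction coefficient is cleanly $(1-\alpha_k)$ while the noise remains $\alpha_k^2\sigma_u^2$ rather than $\alpha_k\sigma_u^2$; this is resolved by separating out the mean-zero noise \emph{before} applying Young's (so it contributes only through its conditional variance), which is what makes the moving-average scheme in line 12 of Algorithm \ref{algo:DSBO} work.
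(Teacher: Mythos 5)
Your proposal is correct and follows essentially the same route as the paper's proof: the identical decomposition of $\bar r_{k+1}-\nabla\Phi(\bar x_{k+1})$ into an $\F_k$-measurable part plus the mean-zero term $\alpha_k(\bar u_k-\E[\bar u_k\mid\F_k])$, the same conditional-variance treatment of the noise, a Young's inequality choice equivalent to the paper's convexity bound $\|(1-\alpha_k)a+\alpha_k b\|^2\leq(1-\alpha_k)\|a\|^2+\alpha_k\|b\|^2$, and the same telescoping. You even flag the same cosmetic point the paper glosses over (the $L_\Phi^2$ factor being absorbed into the constant $2$ on $\alpha_k\E\left[\|\bar r_k\|^2\right]$), so nothing further is needed.
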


\begin{proof}[Proof of Lemma~\ref{lem: r_and_grad}]
    Recall that in Algorithm \ref{algo:DSBO} we know
    \[
        \bar{r}_{k+1} = (1-\alpha_k)\bar{r}_k + \alpha_k\bar{u}_k,
    \]
    which implies 
    \begin{align*}
        &\|\bar{r}_{k+1} - \nabla\Phi(\bar{x}_{k+1})\| \\
        = &\|(1-\alpha_k)(\bar{r}_k -\nabla\Phi(\bar{x}_k)) + \alpha_k(\E\left[\bar{u}_k|\F_k\right] - \nabla\Phi(\bar{x}_k)) + \nabla\Phi(\bar{x}_k) - \nabla\Phi(\bar{x}_{k+1}) + \alpha_k(\bar{u}_k -\E\left[\bar{u}_k|\F_k\right])\|.
    \end{align*}
    Hence we know
    \begin{align*}
        &\E\left[\|\bar{r}_{k+1} - \nabla\Phi(\bar{x}_{k+1})\|^2\right] \\
        =&\E\left[\|(1-\alpha_k)(\bar{r}_k -\nabla\Phi(\bar{x}_k)) + \alpha_k(\E\left[\bar{u}_k|\F_k\right] - \nabla\Phi(\bar{x}_k)) + \nabla\Phi(\bar{x}_k) - \nabla\Phi(\bar{x}_{k+1})\|^2\right] + \alpha_k^2\E\left[\|\bar{u}_k -\E\left[\bar{u}_k|\F_k\right]\|^2\right]\\
        \leq & (1-\alpha_k)\E\left[\|\bar{r}_k -\nabla\Phi(\bar{x}_k)\|^2\right] + \alpha_k\E\left[\|\E\left[\bar{u}_k|\F_k\right] - \nabla\Phi(\bar{x}_k) + \frac{1}{\alpha_k}(\nabla\Phi(\bar{x}_k) - \nabla\Phi(\bar{x}_{k+1})\|^2\right] + \alpha_k^2\sigma_u^2 \\
        \leq &(1-\alpha_k)\E\left[\|\bar{r}_k -\nabla\Phi(\bar{x}_k)\|^2\right] + 2\alpha_k\E\left[\|\E\left[\bar{u}_k|\F_k\right] - \nabla\Phi(\bar{x}_k)\|^2 + \|\bar{r}_k\|^2\right]+ \alpha_k^2\sigma_u^2.
    \end{align*}
    Taking summation on both sides, we obtain~\eqref{eq:temp}.
\end{proof}
The next lemma characterizes $\|\nabla\Phi(\bar{x}_k) - \E\left[\bar{u}_k|\F_k\right]\|^2$, which together with previous lemmas prove Theorem \ref{thm: main}.
\begin{lemma}\label{lem: last}
    In Algorithm \ref{algo:DSBO} if we define
    \begin{align*}
        &\alpha_k = \frac{\mu_g^4}{3L_{g,1}^2C_y}\cdot\beta_k \equiv \frac{1}{\sqrt{K}},\ \gamma \text{ such that \eqref{ineq: gamma_for_quadratics} holds},\  N = \Theta(\log K),\ T\geq 1, \\
        &C_y = 5\left(L_{f,1}^2 + \frac{L_{g,2}^2L_{f,0}^2}{\mu_g^2}\right) + 50L_{g,1}^2\left(\frac{1}{\mu_g^2} + \frac{\gamma}{\mu_g}\right)\left(L_{g,2}^2\sigma_z^2 + L_{f,1}^2\right).
    \end{align*}
    we have
    \begin{align*}
        \sum_{k=0}^{K}\alpha_k\|\E\left[\bar{u}_k|\F_k\right] - \nabla\Phi(\bar{x}_k)\|^2 &= C_y\sum_{k=0}^{K}\alpha_k\|\bar{y}_k^{(T)} - y_k^*\|^2 + \cO\left(1 + \left(1-\frac{\gamma\mu_g}{2}\right)^{N}\sum_{k=0}^{K}\alpha_k\right), \\ 
        \frac{1}{K}\sum_{k=0}^{K}\E\left[\|\nabla\Phi(\bar{x}_k)\|^2\right] &= \cO\left(\frac{1}{\sqrt{K}}\right).
    \end{align*}
\end{lemma}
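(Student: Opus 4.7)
The plan is to establish the two claims in order. For the first identity, I begin by expanding
\[
\E[\bar u_k \mid \F_k] - \nabla\Phi(\bar x_k) = \frac{1}{n}\sum_{i=1}^n\bigl[\nabla_x f_i(x_{i,k},y_{i,k}^{(T)}) - \nabla_x f_i(\bar x_k, y_k^*)\bigr] + \frac{1}{n}\sum_{i=1}^n\bigl[\nabla_{xy}^2 g_i(\bar x_k,y_k^*)z_*^{(k)} - \nabla_{xy}^2 g_i(x_{i,k},y_{i,k}^{(T)})\E[z_{i,N}^{(k)}\mid\F_k]\bigr],
\]
and inserting $\E[\bar z_N^{(k)}\mid\F_k]$ as a pivot in the second sum. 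The Lipschitz bounds from Assumption \ref{assump: lip_convexity}, together with the uniform bound $\|z_*^{(k)}\|\leq L_{f,0}/\mu_g$ and the second moment bound $\|\E[\bar z_N^{(k)}\mid\F_k]\|^2\leq\sigma_z^2$ from Lemma \ref{lem: dsgd_higp}, decompose $\|\E[\bar u_k\mid\F_k] - \nabla\Phi(\bar x_k)\|^2$ into four contributions: (i) a multiple of $\|\bar y_k^{(T)}-y_k^*\|^2$, (ii) consensus errors $\|X_k-\bar x_k\bfonet\|^2/n$ and $\|Y_k^{(T)}-\bar y_k^{(T)}\bfonet\|^2/n$, (iii) the HIGP bias $\|\E[\bar z_N^{(k)}\mid\F_k]-z_*^{(k)}\|^2$, and (iv) the conditional $z$-consensus $\|\E[Z_N^{(k)}-\bar z_N^{(k)}\bfonet\mid\F_k]\|^2/n$. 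The constant $C_y$ collects exactly the coefficient of (i) after Lemma \ref{lem: higp_main} is used to split off the portion of (iii) proportional to $\|\bar y_k^{(T)}-y_k^*\|^2$.

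Next, I multiply by $\alpha_k$ and sum over $k$. Lemma \ref{lem: XY_cons} converts (ii) into sums involving $\tilde\alpha_k^2$ and $\tilde\beta_{k+1}^2$, the residual of (iii) from Lemma \ref{lem: higp_main} contributes a $(1-2\gamma\mu_g/3)^{N-1}$ tail, and Lemma \ref{lem: dsgd_higp} bounds (iv) by a $(1-2\gamma\mu_g/3)^N$ term. With the constant stepsize choice $\alpha_k\equiv\beta_k\equiv\Theta(1/\sqrt K)$, Lemma \ref{lem: all_const_sum} collapses $\sum_k\alpha_k\tilde\alpha_k^2$ and $\sum_k\alpha_k\tilde\beta_{k+1}^2$ to $\cO(1)$, while the geometric-in-$N$ pieces factor into $(1-\gamma\mu_g/2)^N\sum_k\alpha_k$, producing exactly the form claimed.

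For the second claim I combine Lemma \ref{lem: r_bar_sum} and Lemma \ref{lem: r_and_grad} with $\|\nabla\Phi(\bar x_k)\|^2\leq 2\|\bar r_k\|^2 + 2\|\bar r_k-\nabla\Phi(\bar x_k)\|^2$ to obtain
\[
\sum_{k=0}^K\alpha_k\E[\|\nabla\Phi(\bar x_k)\|^2] \leq \cO(1) + \cO\!\left(\sum_{k=0}^K\alpha_k\E[\|\E[\bar u_k\mid\F_k]-\nabla\Phi(\bar x_k)\|^2]\right) + \cO\!\left(\sum_{k=0}^K\alpha_k^2\right).
\]
Substituting the first identity leaves only $C_y\sum_k\alpha_k\|\bar y_k^{(T)}-y_k^*\|^2$ to control. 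The warm-start identity $\bar y_{k+1}^{(0)}=\bar y_k^{(T)}$ re-indexes this sum as $c\sum_k\beta_k\|\bar y_k^{(0)}-y_{k-1}^*\|^2$ with $c=\mu_g^4/(3L_{g,1}^2 C_y)$, and applying Lemma \ref{lem: y_error} bounds the re-indexed sum by $\cO(1)$ plus $\cO(c^2/\mu_g^2)\sum_k\alpha_k\|\bar r_k\|^2$ plus $\sum_k C_{k,T}=\cO(1)$.

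The main obstacle is the bootstrap in this final step: Lemma \ref{lem: y_error} reintroduces $\sum_k\alpha_k\|\bar r_k\|^2$ on the right-hand side of the bound one is trying to derive. The specific choice $c=\mu_g^4/(3L_{g,1}^2 C_y)$ is calibrated so that after the $cC_y$ multiplication the induced coefficient of $\sum_k\alpha_k\|\bar r_k\|^2$ is strictly smaller than the $\alpha_k/2-L_\Phi\alpha_k^2/2$ coefficient that Lemma \ref{lem: r_bar_sum} places on the left-hand side, enabling absorption. Once this absorption is carried out, the choice $N=\Theta(\log K)$ forces $(1-\gamma\mu_g/2)^N\sum_k\alpha_k=\cO(1/\sqrt K)$, while Lemma \ref{lem: all_const_sum} gives $\sum_k\alpha_k^2=\cO(1)$. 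Dividing through by $\sum_k\alpha_k=\Theta(\sqrt K)$ then yields both halves of the lemma and closes the proof of Theorem \ref{thm: main}.
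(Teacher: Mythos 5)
Your proposal is correct and follows essentially the same route as the paper's proof: the same five-way decomposition of $\E\left[\bar{u}_k|\F_k\right] - \nabla\Phi(\bar{x}_k)$ with $\E\left[\bar{z}_N^{(k)}|\F_k\right]$ as pivot, the same invocation of Lemmas \ref{lem: XY_cons}, \ref{lem: dsgd_higp}, \ref{lem: higp_main}, and \ref{lem: all_const_sum}, and crucially the same bootstrap in which Lemma \ref{lem: y_error} reintroduces $\sum_k\alpha_k\E\left[\|\bar{r}_k\|^2\right]$ and the stepsize ratio is calibrated so that this term is absorbed by the left-hand side of Lemma \ref{lem: r_bar_sum}. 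You have correctly identified the one genuinely delicate step (the absorption), so nothing essential is missing.
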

\begin{proof}[Proof of Lemma~\ref{lem: last}]
    Notice that we have
    \begin{align*}
        \E\left[\bar{u}_{k}|\F_k\right] &= \frac{1}{n}\sum_{i=1}^{n}\nabla_x f_i(x_{i,k},y_{i,k}^{(T)}) - \frac{1}{n}\sum_{i=1}^{n}\nabla_{xy}^2 g_i(x_{i,k},y_{i,k}^{(T)})\E\left[z_{i,N}^{(k)}|\F_k\right], \\
        \nabla\Phi(\bar{x}_k) &= \frac{1}{n}\sum_{i=1}^{n}\nabla_x f_i(\bar{x}_k, y_k^*) - \left(\frac{1}{n}\sum_{i=1}^{n}\nabla_{xy}^2 g_i(\bar{x}_k, y_k^*)\right)\left(\frac{1}{n}\sum_{i=1}^{n}\nabla_y^2g_i(\bar{x}_k, y_k^*)\right)^{-1}\left(\frac{1}{n}\sum_{i=1}^{n}\nabla_y f_i(\bar{x}_k, y_k^*)\right), \\
        &=\frac{1}{n}\sum_{i=1}^{n}\nabla_x f_i(\bar{x}_k, y_k^*) - \frac{1}{n}\left(\sum_{i=1}^{n}\nabla_{xy}^2 g_i(\bar{x}_k, y_k^*)\right)\left(\sum_{i=1}^{n}\nabla_y^2g_i(\bar{x}_k, y_k^*)\right)^{-1}\left(\sum_{i=1}^{n}\nabla_y f_i(\bar{x}_k, y_k^*)\right) \\
        &= \frac{1}{n}\sum_{i=1}^{n}\nabla_x f_i(\bar{x}_k, y_k^*) - \frac{1}{n}\left(\sum_{i=1}^{n}\nabla_{xy}^2 g_i(\bar{x}_k, y_k^*)\right)z_*^{(k)}.
    \end{align*}
    Hence we know
    \begin{align*}
        &\|\E\left[\bar{u}_k|\F_k\right] - \nabla\Phi(\bar{x}_k)\| \\
        = &\frac{1}{n}\sum_{i=1}^{n}\left(\|\nabla_x f_i(x_{i,k},y_{i,k}^{(T)}) - \nabla_xf_i(\bar{x}_k,\bar{y}_k^{(T)})\| + \|\nabla_xf_i(\bar{x}_k,\bar{y}_k^{(T)}) - \nabla_x f_i(\bar{x}_k, y_k^*)\|\right) \\
        + & \frac{1}{n}\sum_{i=1}^{n}\left(\|\nabla_{xy}^2g_i(x_{i,k},y_{i,k}^{(T)})\left(\E\left[z_{i,N}^{(k)}|\F_k\right] - z_*^{(k)}\right)\| + \|\left(\nabla_{xy}^2g_i(x_{i,k},y_{i,k}^{(T)}) - \nabla_{xy}^2 g_i(\bar{x}_k, \bar{y}_k^{(T)})\right)z_*^{(k)}\|\right) \\
        + & \frac{1}{n}\sum_{i=1}^{n}\|\left(\nabla_{xy}^2 g_i(\bar{x}_k, \bar{y}_k^{(T)}) - \nabla_{xy}^2 g_i(\bar{x}_k, y_k^*)\right)z_*^{(k)}\|,
    \end{align*}
    which implies
    \begin{align*}
        &\|\E\left[\bar{u}_k|\F_k\right] - \nabla\Phi(\bar{x}_k)\|^2 \\
        \leq &\frac{5}{n}\sum_{i=1}^{n}\left[L_{f,1}^2\left(\|x_{i,k}-\bar{x}_k\|^2+ \|y_{i,k}^{(T)} - \bar{y}_k^{(T)}\|^2 + \|\bar{y}_k^{(T)} - y_k^*\|^2\right) + L_{g,1}^2\|\E\left[z_{i,N}^{(k)}|\F_k\right] - z_*^{(k)}\|^2\right] \\
        + &\frac{5}{n}\sum_{i=1}^{n}\left[\frac{L_{g,2}^2L_{f,0}^2}{\mu_g^2}\left(\|x_{i,k}-\bar{x}_k\|^2 + \|y_{i,k}^{(T)} - \bar{y}_k^{(T)}\|^2 + \|\bar{y}_k^{(T)} - y_k^*\|^2\right)\right] \\
        \leq & 5\left(L_{f,1}^2 + \frac{L_{g,2}^2L_{f,0}^2}{\mu_g^2}\right)\cdot \frac{1}{n}\left(\|X_k - \bar{x}_k\bfonet\|^2 + \|Y_k^{(T)} - \bar{y}_k^{(T)}\bfonet\|^2 + n\|\bar{y}_k^{(T)} - y_k^*\|^2\right) \\
        + &10L_{g,1}^2\cdot\frac{1}{n}\left(\|\E\left[Z_N^{(k)} - \bar{z}_N^{(k)}\bfonet|\F_k\right]\|^2\right) + 10L_{g,1}^2\|\E\left[\bar{z}_N^{(k)}|\F_k\right] - z_*^{(k)}\|^2 \\
        \leq & \left[5\left(L_{f,1}^2 + \frac{L_{g,2}^2L_{f,0}^2}{\mu_g^2}\right) + 50L_{g,1}^2\left(\frac{1}{\mu_g^2} + \frac{\gamma}{\mu_g}\right)\left(L_{g,2}^2\sigma_z^2 + L_{f,1}^2\right)\right]\cdot \left(\|\bar{y}_k^{(T)} - y_k^*\|^2 + \sigma_x^2\tilde{\alpha}_k^2 + T\tilde{\beta}_{k+1}^2\right) \\
        + & 30L_{g,1}^2\left(1-\frac{\gamma\mu_g}{2}\right)^N\left(\frac{L_{f,0}^2}{\mu_g^2} + L_{f,0}^2\right) \\
        +& 10L_{g,1}^2\left[(1-\gamma\mu_g)^{N}\cdot\frac{L_{f,0}^2}{\mu_g^2} + 90C_{\tilde{M}}L_{g,1}^2\left(\frac{1}{\mu_g^2} + \frac{\gamma}{\mu_g}\right)\left(\frac{L_{f,0}^2}{\mu_g^2} +L_{f,0}^2\right)\left(1-\frac{2\gamma\mu_g}{3}\right)^{N-1}\right],
    \end{align*}
    where the third inequality uses \eqref{ineq: XY_consensus}, \eqref{ineq: z_exp_cons} and \eqref{ineq: z_conv_higp}. Taking summation on both sides, we have
    \begin{equation}
        \begin{aligned}
            \sum_{k=0}^{K}\alpha_k\|\E\left[\bar{u}_k|\F_k\right] - \nabla\Phi(\bar{x}_k)\|^2 = C_y\sum_{k=0}^{K}\alpha_k\|\bar{y}_k^{(T)} - y_k^*\|^2 + \cO\left(\sum_{k=0}^{K}\alpha_k(\tilde{\alpha}_k^2 + \tilde{\beta}_k^2) + \left(1-\frac{\gamma\mu_g}{2}\right)^{N-1}\sum_{k=0}^{K}\alpha_k\right).
        \end{aligned}
    \end{equation}
    Setting for all $k$ that
    \[
        \alpha_k = C_{\alpha,\beta}\cdot\beta_k \equiv \frac{1}{\sqrt{K}},\ C_{\alpha,\beta} = \frac{\mu_g}{2\sqrt{3C_y}L_{y^*}},
    \]
    and using \eqref{ineq: bar_y_sum} and Lemma \ref{lem: all_const_sum}, we know
    \begin{equation}\label{ineq: r_and_grad_final}
        \begin{aligned}
            &\frac{1}{\sqrt{K}}\sum_{k=0}^{K}\E\left[\|\E\left[\bar{u}_k|\F_k\right] - \nabla\Phi(\bar{x}_k)\|^2\right]=C_yC_{\alpha,\beta}\sum_{k=0}^{K}\beta_k\|\bar{y}_k^{(T)} - y_k^*\|^2 + \cO\left(\frac{1}{\sqrt{K}} + \sqrt{K}\left(1-\frac{\gamma\mu_g}{2}\right)^{N-1}\right)\\
            = &C_yC_{\alpha,\beta}L_{y^*}^2\sum_{k=0}^{K}\left(\frac{4C_{\alpha,\beta}}{\sqrt{K}\mu_g^2}+\frac{2}{K\mu_g}\right)\E\left[\|\bar{r}_k\|^2\right] + \cO\left(1 + \sqrt{K}\left(1-\frac{\gamma\mu_g}{2}\right)^{N-1}\right)\\ 
            =&\sum_{k=1}^{K}\left(\frac{1}{3\sqrt{K}}+\frac{2C_yC_{\alpha,\beta}L_{y^*}^2}{K\mu_g}\right)\E\left[\|\bar{r}_k\|^2\right] + \cO\left(1 + \sqrt{K}\left(1-\frac{\gamma\mu_g}{2}\right)^{N-1}\right),
        \end{aligned}
    \end{equation}
    which together with \eqref{ineq: main_of_MA} and \eqref{eq: smoothness_const} imply 
    \begin{align*}
        &\left(\frac{1}{2\sqrt{K}} - \frac{L_{\Phi}}{2K}\right)\sum_{k=0}^{K}\E\left[\|\bar{r}_k\|^2\right] \\
        \leq &\frac{1}{2\sqrt{K}}\sum_{k=0}^{K}\E\left[\|\E\left[\bar{u}_k|\F_k\right] - \nabla\Phi(\bar{x}_k)\|^2\right] + 2\sigma_u^2\sum_{k=0}^{K}\frac{1}{K} + \Phi(0) - \inf_x\Phi(x) + \frac{1}{2}\E\left[\|\bar{r}_0\|^2\right] \\
        \leq &\frac{1}{2\sqrt{K}}\sum_{k=1}^{K}\left(\frac{1}{3}+\frac{2C_yC_{\alpha,\beta}L_{y^*}^2}{\sqrt{K}\mu_g}\right)\E\left[\|\bar{r}_k\|^2\right] + \cO\left(1 + \sqrt{K}\left(1-\frac{\gamma\mu_g}{2}\right)^{N-1}\right).
    \end{align*}
    Hence we know
    \begin{align*}
        \left(\frac{1}{3\sqrt{K}} - \frac{L_{\Phi}}{2K} - \frac{C_yC_{\alpha,\beta}L_{y^*}^2}{K\mu_g}\right)\sum_{k=0}^{K}\E\left[\|\bar{r}_k\|^2\right] = \cO\left(1 + \sqrt{K}\left(1-\frac{\gamma\mu_g}{2}\right)^{N-1}\right).
    \end{align*}
    Using the above expression, \eqref{ineq: r_and_grad_final} and Lemma \ref{lem: r_and_grad}, we know 
    \begin{align*}
        \frac{1}{\sqrt{K}}\sum_{k=0}^{K}\E\left[\|\nabla\Phi(\bar{x}_k)\|^2\right]\leq \frac{2}{\sqrt{K}}\sum_{k=0}^{K}\E\left[\|\bar{r}_k\|^2 + \|\bar{r}_k - \nabla\Phi(\bar{x}_k)\|^2\right] = \cO\left(1 + \sqrt{K}\left(1-\frac{\gamma\mu_g}{2}\right)^{N-1}\right),
    \end{align*}
    for sufficiently large $K$. Note that $\gamma$ is in a constant interval by \eqref{ineq: gamma_for_quadratics}, hence $\left(1-\frac{\gamma\mu_g}{2}\right)$ is a constant that is independent of $K$. Picking $N = \Theta(\log K)$ such that $\left(1-\frac{\gamma\mu_g}{2}\right)^{N-1} = \cO\left(\frac{1}{\sqrt{K}}\right)$, we know
    \[
        \frac{1}{K}\sum_{k=0}^{K}\E\left[\|\nabla\Phi(\bar{x}_k)\|^2\right] = \cO\left(\frac{1}{\sqrt{K}}\right).
    \]
    Moreover, from \eqref{ineq: XY_consensus} we know:
    \[
        \frac{1}{K}\sum_{k=0}^{K}\frac{\E\left[\|X_k - \bar{x}_k\bfonet\|^2\right]}{n} = \cO\left(\frac{1}{K}\sum_{k=0}^{K}\tilde{\alpha}_k^2\right) = \cO\left(\frac{1}{K}\right),
    \]
    where the second equality holds due to Lemma \ref{lem: all_const_sum}
    The above two equalities prove Theorem \ref{thm: main}. To find an $\epsilon$-stationary point, we may set $K = \Theta(\epsilon^{-2})$ and we know from $T\geq 1,\ N=\log K$ that the sample complexity will be
    $\tilde{\cO}(\epsilon^{-2})$.
\end{proof}

\section{Discussion}\label{sec: discussion}
We briefly discuss Assumption 3.4 (iv) and (v) in \citet{yang2022decentralized} and MDBO in \cite{gao2022stochastic} in this section.
\subsection{Assumption 3.4 (iv) and (v) in \citet{yang2022decentralized}}
\begin{itemize}
    \item Assumption 3.4 (iv) assumes bounded second moment of $\nabla_yg_i(x,y;\xi)$. It is stronger than our Assumption \ref{assump: similarity_of_g} as discussed right after Assumption \ref{assump: similarity_of_g}. \\
    As pointed out by one reviewer during the discussion period, bounded moment condition on $\nabla_y g_i(x,y;\xi)$ is also restrictive especially when $g_i$ is strongly convex in $y$. To see this, we notice that the unbiasedness of $\nabla_y g_i(x,y;\xi)$ and its bounded second moment imply
\[
    \|\nabla_y g(x,y)\|^2 = \E\left[\|\nabla_y g(x,y;\xi)\|^2\right] -  \E\left[\|\nabla g(x,y) - \E\left[\nabla_y g(x,y;\xi)\right]\|^2\right]\leq C_g^2
\]
for all $x, y$. Here $\nabla_y g(x,y;\xi):=\frac{1}{n}\sum_{i=1}^{n}\nabla_y g_i(x,y;\xi_i)$. Then for any $y_1, y_2$
    \[
        2C_g\geq \|\nabla_y g(x, y_1) - \nabla_y g(x, y_2)\| \geq \mu_g \|y_1 - y_2\|
    \]
where the second inequality uses the fact that $g(x,y)$ is $\mu_g$-strongly convex in $y$ for any $x$. However $\sup_{y_1, y_2}\|y_1 - y_2\|=+\infty$, which leads to the contradiction, meaning that there does not exist a function $g$ satisfying all the assumptions above. In short, a function cannot be strongly convex and have bounded gradient at the same time , but both assumptions are used in \citet{yang2022decentralized}.
    
    \item Assumption 3.4 (v) assumes each $I - \frac{1}{L_g}\nabla_y^2g_i(x,y;\xi)$ has bounded second moment such that
\[
    \E\left[\|I - \frac{1}{L_g}\nabla_y^2g_i(x,y;\xi)\|_2^2\right]\leq (1-\kappa_g)^2,
\]
for some constant $\kappa_g\in (0, \frac{\mu_g}{L_g})$, where $L_g = \sqrt{L_{g,2}^2 + \sigma_{g,2}^2}$. It serves as a key role in proving the linear convergence of the Hessian matrix inverse estimator (see Lemma A.2, A.3 and the definition of $b$ right under section B of the Supplementary Material). However, it is restrictive under certain cases. For any given $0<\mu_g<L_g$, consider $X\in\R^{2\times 2}$ to be a random matrix and
    \[
        X = 
        \begin{pmatrix}
            2L_g &0 \\
            0 &0 
        \end{pmatrix} \text{ or }
        \begin{pmatrix}
            0 &0 \\
            0 &2\mu_g 
        \end{pmatrix} \text{ with equal probability},
    \]
    then it is easy to verify that $X$ has bounded variance and in expectation equals $\text{diag}(L, \mu)$, but
    \[
        \E\left[\|I - \frac{1}{L_g}X\|_2^2\right]=1,
    \]
    and thus their Assumption 3.4 (v) does not hold in this example. 
\end{itemize}

\subsection{MDBO}\label{sec: MDBO_discussion}
Although \citet{gao2022stochastic} claims that they solve the G-DSBO problem, their hypergradient (see equations (2) and (3) of their paper accessed from arXiv at the time of the submission of our manuscript to ICML: \url{https://arxiv.org/abs/2206.15025v1})
is defined as
\[
    \nabla F(x) := \frac{1}{K}\sum_{k=1}^{K}\nabla F^{(k)}(x),
\]
where 
\[
    \nabla F^{(k)}(x) := \nabla_x f^{(k)}(x, y^*(x)) - \nabla_{xy}^2g^{(k)}(x, y^*(x))(\nabla_y^2 g^{(k)}(x,y^*(x)))^{-1}\nabla_y f^{(k)}(x,y^*(x)).
\]
Clearly, this is not the hypergradient of G-DSBO, unless $g^{(i)}(x,y)=g^{(j)}(x,y)$ for any $1\leq i<j \leq n$, which requires an additional assumption that the data distributions that generate the lower level function $g^{(i)}$ are the same.
Note that their algorithm cannot be classified as P-DSBO either, because $y^*(x)$ in the above expression is defined globally. Therefore, their algorithm is not designed for neither G-DSBO nor P-DSBO. It is not clear what problem that their algorithm is designed for. 

While we are preparing our camera-ready version, we find the latest version of \citet{gao2022stochastic} (which is \citet{gao2023convergence}), which implicitly uses the condition that all lower level functions are the same. See equation (2) on page 3 of \cite{gao2023convergence} and the description right above it: ``Then, according to Lemma 1 of (Gao, 2022a), we can compute the gradient of $F^{(k)}(x)$ as follows:", where ``(Gao, 2022a)" represents \citet{gao2022convergence_fed}, in which their Lemma 1 explicitly states ``When the data distributions across all devices are homogeneous". However, all assumptions about MDBO in \citet{gao2022stochastic} do not mention anything about the data distributions of the lower level functions $g^{(i)}$. It should be noted that once all lower level functions $g^{(i)}$ are the same then their problem setup is one special case of ours in \eqref{eq: dsbo_opt} (i.e., when $g^{(i)} = g^{(j)}$ for any $i\neq j$), and it does not need to tackle the major challenge discussed in \eqref{neq: mismatch}.

\subsection{Computational complexity}\label{sec: comp}
Assume that computing a stochastic derivative with size $m$ requires $\cO(m)$ computational complexity. For example the complexity of computing a stochastic Hessian matrix $\nabla_y^2g_i(x,y;\xi)$ is $\cO(q^2)$ and the complexity of computing a stochastic gradient $\nabla_x f(x,y;\phi)$ is $\cO(p)$. Note that computing a Hessian-vector product (or Jacobian-vector product) is as cheap as computing a gradient \citep{pearlmutter1994fast, bottou2018optimization}. FEDNEST \citep{tarzanagh2022fednest}, SPDB \citep{lu2022decentralized}, and our Algorithm \ref{algo:DSBO} MA-DSBO only require stochastic first order and matrix-vector product oracles and thus the computational complexity is $\tilde \cO(d\epsilon^{-2})$, where $d:=\max(p, q)$. Note that DSBO-JHIP \citep{chen2022decentralized} requires computing full Jacobian matrices which lead to $\tilde \cO(pq\epsilon^{-3})$ complexity. GBDSBO \citep{yang2022decentralized} computes full Hessian matrices in the Hessian inverse estimation inner loop (Line 10-13 of Algorithm 1 in \citet{yang2022decentralized}), and full Jacobian matrices in the outer loop (Line 8 of Algorithm 1 in \citet{yang2022decentralized}), and thus their computational cost is $\cO((q^2\log(\frac{1}{\epsilon}) + pq)n^{-1}\epsilon^{-2})$.

\end{document}